\documentclass[12pt]{amsart}

\usepackage{graphicx, xcolor}
\usepackage{comment}
\usepackage{appendix}

\usepackage{latexsym,amsmath,amsthm,amsfonts,mathrsfs,MnSymbol,epsfig}
\usepackage[initials]{amsrefs}
\usepackage[all]{xy}
\usepackage{bm}
\usepackage{slashed}
\usepackage{amsthm}
\usepackage{thmtools}

\usepackage{stackengine}
\stackMath

\providecommand{\CC}{{\mathbb{C}}}
\providecommand{\RR}{{\mathbb{R}}}

\providecommand{\ZZ}{{\mathbb{Z}}}

\providecommand{\KK}{{\mathcal K}}
\providecommand{\UU}{{\mathcal U}}

\providecommand{\Wp}{{\mathcal W}^{pol}}

\providecommand{\mcB}{{\mathcal{B}}}

\providecommand{\mcE}{{\mathcal{E}}}

\providecommand{\mcK}{{\mathcal{K}}}
\providecommand{\mcL}{{\mathcal{L}}}

\providecommand{\mcS}{{\mathcal{S}}}
\providecommand{\mcW}{{\mathcal{W}}}

\providecommand{\mfA}{{\mathfrak{A}}}

\providecommand{\Lh}{{\mathfrak h}}

\providecommand{\Sch}{{\mathcal{S}}}

\providecommand{\Heis}{\mathbb{H}}

\providecommand{\SymH}{\Sigma_H}

\newcommand{\ang}[1]{\langle #1 \rangle} 
\newcommand{\lra}{\longrightarrow}

\DeclareMathOperator{\Hom}{Hom}
\DeclareMathOperator{\End}{End}

\DeclareMathOperator{\ch}{ch}
\DeclareMathOperator{\Td}{Td}

\DeclareMathOperator{\tr}{tr}

\DeclareMathOperator{\Ind}{Index}

\newtheorem{theorem}{Theorem}
\newtheorem{lemma}[theorem]{Lemma}

\newtheorem{proposition}[theorem]{Proposition}

\theoremstyle{definition}
\newtheorem{definition}[theorem]{Definition}

\theoremstyle{remark}
\newtheorem{remark}[theorem]{Remark}
\newtheorem{example}[theorem]{Example}

\numberwithin{equation}{section}

\title
[Toeplitz Operators and Equivariant $K$-homology]
{Toeplitz Operators  on Contact Manifolds and Equivariant $K$-homology}
\author{Alexander Gorokhovsky}
\address{University of Colorado, Boulder, Campus Box 395, Boulder, Colorado, 80309, USA}
\email{gorokhov@colorado.edu}
\author{Erik van Erp}
\address{Dartmouth College, 6188, Kemeny Hall, Hanover, New Hampshire, 03755, USA}
\email{erikvanerp@dartmouth.edu}
\subjclass[2010]{46L80, 58J20, 58J40}

\begin{document}

\begin{abstract}
We present an equivariant generalization of Boutet de Monvel's index theorem for Toeplitz operators on  contact manifolds.
We prove that the Dirac operator and the Szeg\"o projection determine the same class in equivariant $K$-homology, generalizing a theorem of Baum-Douglas-Taylor.
We do not assume that the contact manifold is the boundary of a strictly pseudoconvex  domain.
The proof proceeds by a deformation linking the principal symbols of the classical and Heisenberg pseudodifferential calculi.
At the level of symbols, the projection defining the Dirac class deforms to the principal Heisenberg symbol of the Szeg\"o projection.
This  deformation implies  equality of the corresponding classes in $K$-homology. This, in turn, gives an equivariant generalization of Boutet de Monvel's index formula for  Toeplitz operators.
\end{abstract}
\maketitle

\section{Introduction}
Let $M$ be a compact contact manifold  on which a compact Lie group $G$ acts smoothly, preserving the contact form.
In this paper we prove that the Spin$^c$ Dirac operator and the Szeg\"o projection on $M$ determine the same class in equivariant $K$-homology. 

Let $\Omega$ be a compact complex analytic domain with a smooth strictly pseudoconvex  boundary $\partial\Omega=M$.
The Szeg\"o projection $S$ is the orthogonal projection of $L^2(M)$ onto the Hardy space $H^2(M)$ of boundary values of holomorphic functions on $\Omega$.
If $f:M\to \mathrm{GL}(r,\CC)$ is a smooth function, let $T_f=SfS$ be the corresponding Toeplitz operator acting on $H^2(M,\CC^r)$. 
Boutet de Monvel proved the index  formula \cite{Bo79},
\begin{equation}\label{eq:BdM}
     \Ind T_f = \int_M \ch(f)\wedge \Td(T^{1,0}\Omega|M)
\end{equation}
In \cite{BDT89}, Baum, Douglas and Taylor reformulated Boutet de Monvel's theorem in $K$-homology and proved that the Szeg\"o projection and the Spin$^c$ Dirac operator determine the same class in $K$-homology,
\[
[D]=[S]\quad \text{in } K_1(M)
\]
The aim of this paper is to extend this in two directions:
\begin{itemize}
    \item We prove that $[D]=[S]$ when $M$ is a compact contact manifold, not necessarily the boundary of a  complex domain.
    \item We prove this equality in equivariant $K$-homology, if a compact Lie group $G$ acts on $M$ preserving the contact form.
\end{itemize}
The proof that $[D]=[S]$  in \cite{BDT89} relies on an analysis of the boundary map in $K$-homology $\partial:K_0(\Omega, M)\to K_1(M)$. 
This proof does not apply when the contact manifold $M$ is not fillable.
Moreover, even if $M$ is fillable, an action of a compact Lie group $G$ on a boundary $M=\partial \Omega$ does not necessarily extend to $\Omega$. Thus, the proof in \cite{BDT89} does not generalize to the equivariant case.

Our main result is the following theorem.

\begin{theorem}\label{thm:main}
Let $M$ be a compact contact manifold, and let $G$ be a compact Lie group that acts smoothly on $M$,
preserving the contact form.
Let $D$ be a $G$-equivariant Spin$^c$ Dirac operator on $M$, and let $S$ be a $G$-equivariant Szeg\"o projection on $M$.
Then in $G$-equivariant $K$-homology,
\[ [D]=[S]\qquad \text{in } K_1^G(M)\]
\end{theorem}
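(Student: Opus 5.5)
The strategy is to realize both classes as images of symbol classes under analytic index maps, and to connect the two symbols by a deformation. Write $H\subset TM$ for the contact distribution and $T^*M=H^*\oplus N$, where the line bundle $N$ is trivialized by the $G$-invariant contact form $\theta$.

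\textbf{Step 1: the Dirac class.} The odd Dirac operator determines a class in $K^1_G(T^*M)$. Equivalently, a projection-valued symbol $p_D$ on the cosphere bundle defines a class in $K^0_G(S^*M)$, and $[D]$ is its image under the boundary/index map $K^0_G(S^*M)\to K_1^G(M)$, i.e. the usual Atiyah--Singer/Kasparov description via $C(S^*M)\to \mathcal{Q}$. Concretely, $p_D$ is the positive spectral projection of the Clifford symbol $c(\xi)/|\xi|$. The compatible almost complex structure $J$ on $H$ makes the spinor bundle $\Lambda^{0,*}H$.

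\textbf{Step 2: the Szeg\H{o} class.} The Szeg\H{o} projection is a Heisenberg pseudodifferential operator of order $0$, so we use the Heisenberg calculus. The $C^*$-closure of order-zero Heisenberg operators modulo compacts gives an extension
\[
0\to\mcK\to\Psi^0_H\to\SymH\to0,
\]
and $[S]$ is the image of the class of the principal Heisenberg symbol $\sigma_H(S)\in K_0^G(\SymH)$ under the boundary map. Fibrewise, $\SymH$ is a $C^*$-algebra of the osculating Heisenberg group $\Heis_x$. Its representations at $\xi_0=\pm1$ are Bargmann--Fock, and the symbol of $S$ there is the rank-one vacuum projection in Fock space on one side and $0$ on the other.

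\textbf{Step 3: deformation.} Build a $G$-equivariant continuous field of $C^*$-algebras over $[0,1]$ using the adiabatic/parabolic deformation that interpolates between the abelian group $T_xM$ (fibre at $t=0$, giving $C(S^*M)$ modulo the zero section suitably) and the Heisenberg group $\Heis_x$ (fibres at $t>0$, obtained by rescaling the Lie bracket by $t$). Because the family of groups $T_xM \rightsquigarrow \Heis_x$ is amenable, the evaluation at $t=1$ is a $KK$-equivalence and the field is $K$-theoretically trivial. Moreover, the boundary maps to $K_1^G(M)$ are compatible along the field, since both calculi sit inside a common groupoid deformation (Connes tangent groupoid with parabolic dilations). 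Then exhibit an explicit continuous path of projections $p_t$ with $p_0=p_D$ and $p_1=\sigma_H(S)$. The path should be built from a harmonic-oscillator type operator $\xi_0+t\,\bar\partial_H^*\bar\partial_H$ twisted by Clifford multiplication: at $t=0$ it reduces to the Clifford symbol, and at $t=1$ its positive spectral projection is the ground state in $\Lambda^{0,0}$. $G$-equivariance is automatic, since all choices ($\theta$, $J$, metric) can be averaged over compact $G$.

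\textbf{Main obstacle.} The hardest step is the construction of the continuous field and the path $p_t$. We must show that the Clifford/Bargmann--Fock family has a spectral gap uniformly in $t$, including near $\xi_0$ where the Heisenberg representation degenerates to characters, so that $p_t$ is continuous in the $C^*$ norm. We must also check that the resulting index maps commute with evaluation at $t=0$ and $t=1$. I would first verify the gap in the model case of the flat Heisenberg group using explicit Hermite functions, and then globalize by a partition of unity.
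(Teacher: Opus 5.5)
Your overall architecture matches the paper's: deform $\omega$ to $\lambda\omega$ so that $C(S^*M)$ deforms to $\SymH^*$ (and $C_0(T^*M)$ to $C^*(T_HM)$), push the Dirac symbol class across, and use compatibility of the two index maps with the deformation. However, the step that carries all the content fails as you formulate it.

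There is no continuous path of projections $p_t$ in this field with $p_0=p_D$ and $p_1=\sigma_H(S)$. The common principal Weyl symbol $\sigma^W$, i.e.\ restriction to the equator $S(H^*)$ of the parabolic sphere, maps the field onto a constant field: the quotient in (\ref{eqn:weyl short exact sequence}) does not depend on $\lambda$. So the rank of $\sigma^W(p_t)$ is independent of $t$. It equals $2^{n-1}$ for $p_D=\tfrac12(1+c(\xi))$. It equals $0$ for the Schwartz-class symbol $(s,0)$, which moreover acts on scalars rather than spinors.

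Your concrete candidate shows this. Take $T=\xi_0\gamma+A_\lambda$, with $\gamma$ the grading of $\Lambda$, at $\xi_0=1$, $\lambda=1$. Then $T=+1$ on $\ker A_1=\CC\, e^{-\|x\|^2/2}\otimes\Lambda^0$. On the complement, $T$ splits into blocks $\begin{bmatrix}1&\mu\\ \mu&-1\end{bmatrix}$ pairing $\Lambda^+$ with $\Lambda^-$. So its positive spectral projection is the vacuum projection plus an infinite-rank projection with equatorial symbol $\tfrac12(1+c(\hat z))$. This is visible already for $n=1$, where $A_1^+=x+\partial_x$.

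The difficulty you single out as the main obstacle is also not one. Since $T^2=\xi_0^2+A_\lambda^2\ge\xi_0^2$, the spectral gap comes for free. The real analytic issue on your route would be continuity, in the field, of the functional calculus of the unbounded elements $T_\lambda$.

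What is missing is the paper's mechanism for identifying the endpoint: equivariant Bott periodicity realized as deformation quantization.

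1. By stereographic projection (Lemmas \ref{lem:stereo} and \ref{lem:stereo in K}), $[\sigma(P_+)]$ is the image of the Thom class $e_\beta-\begin{bmatrix}0&0\\0&1\end{bmatrix}$ of $H^*$, plus the class of the constant projection onto $\Lambda^-$.
2. The object to deform is the index idempotent (\ref{eq:K boundary}) of the quantized Clifford multiplication $A^+_\lambda$, not a spectral projection of $\xi_0\gamma+A_\lambda$. For $\lambda>0$, Lemma \ref{lem:Asq} shows that $\ker\pi_\lambda(A^+_\lambda)$ is exactly the $U$-invariant vacuum in $\Lambda^0$ and the cokernel vanishes.
3. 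Hence at $\lambda=1$ the idempotent is $\begin{bmatrix}s\otimes p_0&0\\0&1\end{bmatrix}$. Appendix \ref{sec:explicit family of idempotents} makes this an explicit continuous family of Schwartz-class idempotents via Mehler's formula.
4. The constant $1_{\Lambda^-}$ lifts to the identity operator, a degenerate cycle, so it contributes $0$ in $K_1^G(M)$.

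Your approach could be repaired along these lines: subtract the constant, and add a rotation homotopy at $\lambda=1$ carrying the ``half'' projection to $1_{\Lambda^-}$. As written, though, the identification of the deformed Dirac symbol with $\sigma_H(S)$ is absent.

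Finally, compatibility of the index maps with the deformation does not follow merely from both calculi sitting in a common groupoid deformation. The paper combines naturality of boundary maps under asymptotic morphisms (Lemma \ref{lem:naturality of del for asymptotic morphism}) with the odd equivariant version of Baum--van Erp's Theorem 5.4.1. Also, amenability alone does not make evaluation at $t=1$ a $KK$-equivalence; it is evaluation at $t=0$ whose kernel is contractible. You do not actually need that claim.
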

The Dirac operator is elliptic and defined in the classical symbol calculus, whereas the Szeg\"o projection is a pseudodifferential operator in the Heisenberg calculus adapted to the contact structure.
Our proof proceeds by a deformation that links these two calculi.
At the level of symbols, the projection defining the Dirac class deforms to the principal Heisenberg symbol of the Szegő projection.
Fiberwise, this deformation reduces to the deformation of the Bott generator to a rank one projection, as in the proof of Bott periodicity by Elliott, Natsume and Nest \cite{ENN93}.
A homotopy of elliptic symbols within the classical  calculus lifts to a homotopy of Fredholm operators. However, the symbols of the Dirac operator and the Szeg\"o projection lie in different calculi. Nevertheless, we show that this  deformation  implies an equivalence in $K$-homology of the corresponding  operators.

Theorem \ref{thm:main}  implies an equivariant version of Boutet de Monvel's  formula (\ref{eq:BdM}).

\begin{restatable}{theorem}{BDM}\label{cor:equivariant BdM}
Let  $\pi:G\to U(r)$ be a unitary representation of $G$, 
and let $f:M\to \mathrm{GL}(r,\CC)$ be a $G$-equivariant smooth function, i.e. $f(g.p)=\pi(g)f(p)\pi(g)^{-1}$.
Let $T_f$ be the corresponding Toeplitz operator.
Then the  equivariant index of $T_f$ is
\begin{equation}\label{eq:BdM equivariant intro}
    \chi_g(\Ind_G T_f) = \int_{M^g} \ch_g(f)\wedge \Td(M^g)\wedge \frac{1}{\ch_g(\lambda_{-1}(N^g)^*)}   
\end{equation}   
\end{restatable}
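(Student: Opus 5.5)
The plan is to deduce the equivariant index formula from Theorem~\ref{thm:main} by passing through the equivariant index pairing and then applying the Atiyah--Segal--Singer fixed point formula. Since $f$ is $G$-equivariant, it defines a class $[f]\in K^1_G(M)$, namely the class of the equivariant vector bundle over the suspension (or equivalently of the invertible $G$-equivariant matrix function on the trivial bundle $M\times\CC^r$ with $G$ acting through $\pi$). The Toeplitz operator $T_f=SfS$ commutes with the $G$-action on $S(L^2(M)\otimes\CC^r)$, so its equivariant index $\Ind_G T_f$ lies in the representation ring $R(G)$. The first step is to check that, by the definition of the odd $K$-homology class of the projection $S$, the Kasparov pairing satisfies
\[
\Ind_G T_f=\langle [f],[S]\rangle\in R(G)=KK^G(\CC,\CC).
\]
This is essentially the definition of the class $[S]\in K_1^G(M)$ as an extension of $C(M)$ by the compact operators, with $G$-equivariance carried along.

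Next I would invoke Theorem~\ref{thm:main} to replace $[S]$ by $[D]$. This gives $\Ind_G T_f=\langle [f],[D]\rangle$. By the standard odd-dimensional equivariant index theory, the pairing of $[f]\in K^1_G(M)$ with the odd Dirac class equals the equivariant index of the Dirac operator on $M\times S^1$ (or on the mapping torus) twisted by the bundle built from $f$. Equivalently, it is the equivariant index of the Spin$^c$ Dirac operator twisted by the clutching bundle. One could also phrase this as a Toeplitz index $\Ind_G(P_+ f P_+)$, where $P_+$ is the positive spectral projection of $D$. Up to the sign convention, this equals the index of an equivariant elliptic operator.

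Finally I would apply the Atiyah--Segal--Singer Lefschetz formula to this even-dimensional equivariant index problem and then integrate out the circle direction. The Spin$^c$ structure on $M$ comes from the contact structure: its determinant line is that of the complex bundle $H=\ker\theta$ with a compatible complex structure, and $TM\cong H\oplus\RR$. Hence the Todd class of the Spin$^c$ structure restricted to $M^g$ is $\Td(M^g)$, together with the normal contribution $1/\ch_g(\lambda_{-1}(N^g)^*)$. Here $N^g$ is a complex $g$-bundle, since $g$ preserves $\theta$ and hence the Reeb direction, so $N^g\subset H|_{M^g}$ inherits a complex structure. The suspension integration turns the even equivariant Chern character of the clutched bundle into the odd $\ch_g(f)$ on $M^g$.

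The main obstacle is bookkeeping rather than analysis: matching the sign and orientation conventions so that the pairing with $[D]$ produces exactly the right-hand side of \eqref{eq:BdM equivariant intro}. This includes the orientation of $M^g$, the normalization of $\ch_g$ of an odd class, and checking that the Spin$^c$ Todd class is $\Td(M^g)$ with no extra factor coming from the determinant-line twist. I would calibrate these conventions against the nonequivariant case $g=e$, where the formula must reduce to Boutet de Monvel's \eqref{eq:BdM}.
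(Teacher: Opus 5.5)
Your proposal is correct. Its first half is the paper's argument: the paper writes $[T_f]=[f]\cap[S]$ (citing Baum--Douglas--Taylor), uses Theorem~\ref{thm:main} to replace this by $[f]\cap[D]$, and represents the latter by $P_+fP_+$. The difference is in the final cohomological step.

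\textbf{The paper's route.} It turns $P_+fP_+$ into the order zero elliptic pseudodifferential operator $D_f=(P_+\otimes I)M_f(P_+\otimes I)+(I-P_+)\otimes I$ on $M$ itself. It then applies the Atiyah--Singer equivariant index theorem directly on the odd-dimensional manifold $M$, ``as in the derivation of formula (4.4) of Atiyah--Singer III''. This leaves implicit the computation of the symbol class of $D_f$ in $K_G(T^*M)$ restricted to $T^*M^g$.

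\textbf{Your route.} You suspend instead, letting $G$ act trivially on the circle.
\begin{itemize}
\item $M\times S^1$ is an almost complex $G$-manifold, since $T(M\times S^1)\cong H^{1,0}\oplus\CC$ by pairing the Reeb field with $\partial_t$.
\item $\langle[f],[D]\rangle$ becomes, up to a universal sign, the equivariant index of the Spin$^c$ Dirac operator twisted by the clutching bundle $E_f$.
\item The Atiyah--Segal--Singer formula then applies verbatim in its equivariant Riemann--Roch form, with fixed set $M^g\times S^1$.
\item Fiber integration $\int_{S^1}\ch_g(E_f)=\ch_g(f)$ produces the odd Chern character.
\end{itemize}

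\textbf{What each buys.} In your version the cohomological step is literally the Dirac-operator Lefschetz formula. So $\Td(M^g)$ and $1/\ch_g(\lambda_{-1}(N^g)^*)$ appear with no symbol computation on $T^*M$. The cost is two extra standard facts, which you should state explicitly:
\begin{itemize}
\item the equivariant compatibility of the index pairing with external product by $[D_{S^1}]$, which is harmless because $G$ acts trivially on $S^1$;
\item the transgression identity, checked against the specific normalization of $\ch_g(f)$ used in the paper.
\end{itemize}

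\textbf{Sign calibration.} Fixing the sign by calibration is legitimate, because these signs do not depend on $g$ or $M$. Note, however, that Boutet de Monvel's formula is only available for fillable $M$. So the calibration should be done on a fillable example such as $S^{2n+1}$, not on the general contact case.
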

Here:
\begin{itemize}
\item $\chi_g(\pi)=\mathrm{Tr}(\pi(g))$ is the character of a representation $\pi\in R(G)$ evaluated at $g\in G$.
\item $\ch_g$ is the equivariant Chern character.
\item $M^g=\{p\in M\mid gp=p\}$ is the fixed point set of $g$. The contact form $\theta$ of $M$ restricts to a contact form on $M^g$ (Lemma \ref{lem:restriction of theta to M^g}). 
\item $\Td(M^g)$ is the Todd class of the contact manifold $M^g$, viewed as a stably almost complex manifold. 
\item $N^g$ is the normal bundle of $M^g$ in $M$, viewed as a  complex vector bundle. 
\item $\lambda_{-1}(N^g)^*$ is the formal alternating sum $\sum (-1)^k \Lambda^k (N^g)^* $.
\end{itemize}

See section \ref{sec:equivariant BdM formula} for details.
\vskip 6pt

Taking $g=e$  to be  the identity element of $G$,
$\chi_e(\Ind_G T_f)$ is the ordinary $\ZZ$-valued index of $T_f$.
Thus (\ref{eq:BdM equivariant intro}) includes Boutet de Monvel's formula  (\ref{eq:BdM}) as a special case.
To our knowledge, the equivariant generalization of this theorem is new.

In \cite{GvE22} we presented a local index formula for Heisenberg elliptic operators on contact manifolds.
We proved our general formula by reduction to Boutet de Monvel's theorem. 
The present paper is  a  step toward an equivariant version of the formula of \cite{GvE22}.

\vskip 6pt
The paper is organized as follows.
Section \ref{sec:Heisenberg and Weyl} reviews basic facts about the Heisenberg group and the Weyl algebra, fixing the analytic framework used throughout.
Section \ref{sec:Bott per and quantization} interprets  equivariant Bott periodicity in terms of deformation quantization in the Weyl algebra.
Section \ref{sec:symbols in K theory} uses this deformation of the Bott generator to construct a deformation from the classical Dirac symbol to the Heisenberg symbol of the Szeg\"o projection. Finally, section \ref{sec:symbols to operators} completes the proof of Theorem \ref{thm:main} and Theorem \ref{cor:equivariant BdM}.

\section{The Heisenberg Group and the Weyl algebra}\label{sec:Heisenberg and Weyl}

In this section we review basic facts about the Heisenberg group and the Weyl algebra,
primarily to fix notation and conventions, especially regarding constants and signs.
A standard reference for this material is \cite{Fo89}.

\subsection{Symplectic vector spaces}\label{sec:symplectic vector spaces}

Throughout this section $V$ is a $2n$-dimensional symplectic vector space with symplectic form $\omega$.
We shall assume that  a complex structure $J$ has been chosen that is compatible with the symplectic form $\omega$:
\[
J^2=-I\qquad \omega(Jv, Jw) =\omega(v, w)\qquad  \omega(v, Jv) > 0 \text{ for } v\ne 0
\]
Then $g(v,w)=\omega(v,Jw)$ is a positive definite inner product on $V$,
and $J$ is orthogonal for $g$, i.e.  $g(Jv,Jw)=g(v,w)$.
$V$ has the structure of a complex vector space, where $i$ acts as $J$.
We denote this $n$-dimensional complex vector space by $V^{1,0}$.

There is a Hermitian form on $V^{1,0}$, complex linear in the first variable, defined by
\[ h(v, w):= g(v, w)-i \omega(v, w)\]
We sometimes write $\ang{v,w}=h(v,w)$ and $v\cdot w=g(v,w)$ to denote the hermitian and Euclidean inner products.

Let the unitary group
\[U:=U(V^{1,0})\cong U(n)\]
act in the standard way on $V$, $V^{1,0}$, $\Lambda^\bullet V^{1,0}$, etc.

The inner product $g$ determines an isomorphism $V \cong V^*$ by $v \mapsto g(v, \cdot)$. 
This isomorphism  maps $\omega$ to $\omega^*$, $J$ to $J^*$, and $g$ to $g^*$.

In various integrals below, $dv$, $dx$, $dy$ etc.\ denote  Lebesgue measure on $V$,
\[ dv = \frac{1}{n!}|\omega^n|\]
and similarly on $V^*$ using $\omega^*$.

\subsection{Convolution on the Heisenberg group}

The Heisenberg Lie algebra $\Lh=V\oplus \RR$ has bracket
\[ [(v,t),(w,s)]=(0,-\omega(v,w))\in \RR\qquad v,w\in V,\,t,s\in \RR\]
The Heisenberg group $\Heis=V\times\RR$ has group operation
\begin{align*}
    (v,t)\cdot (w,s) &= (v+w, t+s-\frac{1}{2}\omega(v,w))\\
    &= (v+w, t+s+\frac{1}{2}\mathrm{Im}\ang{v,w})
\end{align*} 

\begin{definition}\label{def:moyal product}
Let  $g,h\in \mcS(V)$ be two Schwartz class functions on the symplectic vector space $V$.
For each $\lambda\in \RR$, the twisted convolution product $\ast_\lambda$ is,
 \[ (g\ast_\lambda h)(v) = \int e^{\frac{1}{2}i\lambda\omega(v,w)} g(v-w)h(w)dw\]
Let  $g,h\in \mcS(V^*)$ be two Schwartz class functions on the dual space $V^*$.
For nonzero $\lambda\in\RR\setminus \{0\}$ the Moyal product $\#_\lambda$ is
\[ (g\#_\lambda h)(x) = \frac{1}{(\lambda\pi)^{2n}}\int e^{\frac{2}{\lambda}i \omega^*(a,b)} g(x+a)h(x+b)\,da\,db\]
For $\lambda=0$ we let $\#_0$ be pointwise multiplication of functions on $V^*$.
We sometimes write $\#$ for the Moyal product $\#_1$ with $\lambda=1$.
\end{definition}
\begin{remark}
If $\lambda\ne 0$ the definition of $\#_\lambda$ 
uses the dual symplectic form $\omega^*$ on $V^*$.
Note that $(\lambda\omega)^*=\lambda^{-1}\omega^*$.
If we replace $da\,db$ by the measures on $V^*$ determined by $\lambda^{-1}\omega^*$, we see that $\#_\lambda$ is simply the Moyal product $\#$ for the symplectic space $(V^*,\lambda^{-1}\omega^*)$.
\end{remark}

We record two standard identities (proofs are routine).

\begin{lemma}
For  Schwartz class functions $g,h$ on the Heisenberg group $\Heis=V\times \RR$ with convolution product $f=g\ast h$
we have for each $\lambda\in\RR$,
\[ f_\lambda = g_\lambda\ast_\lambda h_\lambda\]
where $f_\lambda, g_\lambda, h_\lambda$ are functions on $V$ obtained by Fourier transform in the $t\in \RR$ variable,
\[ f_\lambda(v) :=  \int e^{-i\lambda t}f(v,t)dt\]
\end{lemma}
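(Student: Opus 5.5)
We prove the identity by writing out the convolution on $\Heis$ and applying Fourier transform in the central variable directly, using the group law $(v,t)\cdot(w,s)=(v+w,t+s-\tfrac12\omega(v,w))$. Convolution on $\Heis$ (with Haar measure $dw\,ds$) is
\[
f(v,t)=(g\ast h)(v,t)=\int g\bigl((v,t)\cdot(w,s)^{-1}\bigr)h(w,s)\,dw\,ds .
\]
Since $(w,s)^{-1}=(-w,-s)$ (note $\omega(w,-w)=0$), we get $(v,t)\cdot(-w,-s)=(v-w,\,t-s+\tfrac12\omega(v,w))$. Thus
\[
f(v,t)=\int g\bigl(v-w,\,t-s+\tfrac12\omega(v,w)\bigr)\,h(w,s)\,dw\,ds .
\]

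Next we compute $f_\lambda(v)=\int e^{-i\lambda t}f(v,t)\,dt$. All functions are Schwartz, so the triple integral converges absolutely and Fubini applies. In the $t$-integral we substitute $u=t-s+\tfrac12\omega(v,w)$, i.e. $t=u+s-\tfrac12\omega(v,w)$. The exponential then factors as
\[
e^{-i\lambda t}=e^{-i\lambda u}\,e^{-i\lambda s}\,e^{\frac12 i\lambda\omega(v,w)}.
\]
Integrating over $u$ yields $g_\lambda(v-w)$, and integrating over $s$ yields $h_\lambda(w)$. What remains is
\[
f_\lambda(v)=\int e^{\frac12 i\lambda\omega(v,w)}\,g_\lambda(v-w)\,h_\lambda(w)\,dw=(g_\lambda\ast_\lambda h_\lambda)(v),
\]
which is exactly Definition \ref{def:moyal product}.

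The only point requiring care is the sign of the phase. It depends on the convention for convolution, $\int g(xy^{-1})h(y)\,dy$ versus $\int g(y)h(y^{-1}x)\,dy$, and on the sign in the group law. A quick check shows the alternative form of convolution gives the same result: substituting $y=xz^{-1}$ returns us to the first form. One must also use the sign convention $e^{-i\lambda t}$ for the Fourier transform, as stated in the lemma. With these conventions the phase $+\tfrac12 i\lambda\omega(v,w)$ matches the definition of $\ast_\lambda$, and no other obstacle arises.
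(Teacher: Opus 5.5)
Your computation is correct. With $(w,s)^{-1}=(-w,-s)$ and the substitution $u=t-s+\tfrac12\omega(v,w)$, the phase $e^{\frac12 i\lambda\omega(v,w)}$ comes out exactly as in the definition of $\ast_\lambda$. The paper omits the proof and only calls the identity routine, and your direct Fubini computation is the intended verification.
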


\begin{lemma}
Fourier transform 
\[\hat{f}(x) = \int e^{-ixv} f(v) dv\]
is an algebra isomorphism
\[ \mcS(V,\ast_\lambda)\cong \mcS(V^*,\#_\lambda) \qquad f\mapsto \hat f\]
i.e., for  Schwartz class functions $g,h\in \mcS(V)$ with twisted convolution product $f=g\ast_\lambda  h$
we have $\hat{f}=\hat{g}\#_\lambda  \hat{h}$.
\end{lemma}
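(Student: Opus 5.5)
The plan is a direct computation: expand the right-hand side $\hat g\#_\lambda\hat h$ using the definition of the Fourier transform, and reduce it to the Fourier transform of $g\ast_\lambda h$.

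The case $\lambda=0$ is immediate. Then $\ast_0$ is ordinary convolution on $V$ and $\#_0$ is pointwise multiplication, so the claim is the classical convolution theorem $\widehat{g\ast h}=\hat g\,\hat h$. Assume from now on that $\lambda\ne 0$.

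Substituting $\hat g(x+a)=\int e^{-i(x+a)u}g(u)\,du$ and $\hat h(x+b)=\int e^{-i(x+b)w}h(w)\,dw$ into Definition \ref{def:moyal product} gives
\[
(\hat g\#_\lambda\hat h)(x)=\frac{1}{(\lambda\pi)^{2n}}\int\!\!\int g(u)h(w)e^{-ix(u+w)}\Big(\int\!\!\int e^{\frac{2}{\lambda}i\omega^*(a,b)}e^{-iau}e^{-ibw}\,da\,db\Big)du\,dw .
\]
Everything therefore reduces to evaluating the inner oscillatory integral $I(u,w)$.

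The linear form $b\mapsto \frac{2}{\lambda}\omega^*(a,b)$ is $b\mapsto b\cdot\ell(a)$ for a linear isomorphism $\ell:V^*\to V$. This is where the identification $V\cong V^*$ via $g$ and the relation $\omega^*=g^*(J^*\cdot,\cdot)$ (up to sign) from \S\ref{sec:symplectic vector spaces} are used. Integrating first in $b$ gives $(2\pi)^{2n}\delta(\ell(a)-w)$. The $a$-integral then localizes at $a=\ell^{-1}(w)$, contributing a Jacobian $|\det\ell|^{-1}=(\lambda/2)^{2n}$ (Lebesgue measures are normalized by $\omega$ and $\omega^*$) and the phase $e^{-i\ell^{-1}(w)\cdot u}$. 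One should then check that
\[
-\ell^{-1}(w)\cdot u=\tfrac12\lambda\,\omega(u+w,w)=\tfrac12\lambda\,\omega(u,w),
\]
and that the constants combine as $(\lambda\pi)^{-2n}(2\pi)^{2n}(\lambda/2)^{2n}=1$. Setting $v=u+w$, the whole expression becomes
\[
\int e^{-ixv}\int e^{\frac12 i\lambda\omega(v,w)}g(v-w)h(w)\,dw\,dv=\widehat{g\ast_\lambda h}(x),
\]
which is the claim.

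I expect two places to need care.

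First, the interchange of integrals is not literally Fubini, since the $a,b$ integral is only oscillatory. To justify it I would insert a Gaussian damping factor $e^{-\varepsilon(|a|^2+|b|^2)}$. For $\varepsilon>0$ all integrals converge absolutely, because $g,h$ are Schwartz. The $a,b$ integral can then be computed exactly as a Gaussian, giving an approximate identity in $w$ times the phase, and one lets $\varepsilon\to 0$ using dominated convergence and the decay of $g,h$.

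Second, and this is the main obstacle, the sign and constant bookkeeping must match the specific conventions here: the factor $\frac12$ in $\ast_\lambda$, the $\frac{2}{\lambda}$ in $\#_\lambda$, and the sign relating $\omega^*$ to $\omega$ under $V\cong V^*$. I would pin these down by first checking the formula in the case $n=1$, $V=\RR^2$ with the standard $\omega$, and then observing that the general case follows by choosing a symplectic basis.
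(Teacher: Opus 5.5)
The paper gives no argument for this lemma (it calls it routine), and the direct computation you outline is the natural one. Your constants do combine to $1$, and the interchange of integrals is harmless. Since $\hat g,\hat h$ are Schwartz, the Moyal integral converges absolutely, so you can skip the damping altogether: substitute the Fourier integral for $\hat h$ only and apply Fourier inversion in $b$.

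\textbf{The gap.} The real problem is the step you postponed: the sign identity $-\ell^{-1}(w)\cdot u=\tfrac12\lambda\,\omega(u,w)$ is false under the paper's conventions. There $\omega^*$ is the image of $\omega$ under $v\mapsto g(v,\cdot)$. This is also the only sign compatible with the later formulas $v\#_\lambda w=vw+\frac{i\lambda}{2}\omega(v,w)$ and $\pi_\lambda(e_j)=x_j$, $\pi_\lambda(f_j)=-i\lambda\partial_j$.

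Write $a=g(a^\sharp,\cdot)$ and use $\omega(v,w)=g(Jv,w)$. Then $\frac{2}{\lambda}\omega^*(a,b)=b\bigl(\frac{2}{\lambda}Ja^\sharp\bigr)$, so $\ell(a)=\frac{2}{\lambda}Ja^\sharp$. The $b$-integral therefore localizes at $a^\sharp=-\frac{\lambda}{2}Jw$, and
\[
-\ell^{-1}(w)\cdot u=\tfrac{\lambda}{2}\,g(Jw,u)=\tfrac{\lambda}{2}\,\omega(w,u)=-\tfrac{\lambda}{2}\,\omega(u,w).
\]
The $n=1$ test you propose confirms this. Take $\omega^*(a,b)=a_1b_2-a_2b_1$ and $\lambda=2$. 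The $b$-integration gives $(2\pi)^2\delta(a_2+w_1)\delta(a_1-w_2)$, which forces $a=(w_2,-w_1)$ and leaves the phase $e^{-i\omega(u,w)}$. By contrast, $\ast_2$ carries $e^{+i\omega(u,w)}$.

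\textbf{What the computation actually proves.} Carried out honestly, your argument gives
\[
\hat g\#_\lambda\hat h=\widehat{g\ast_{-\lambda}h},\qquad\text{equivalently}\qquad \widehat{g\ast_\lambda h}=\hat h\#_\lambda\hat g .
\]
So for $\lambda\neq 0$ the Fourier transform is an anti-isomorphism onto $(\mathcal S(V^*),\#_\lambda)$, or equivalently an isomorphism onto $\#_{-\lambda}$. Only the case $\lambda=0$ goes through as you state it.

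The displayed identity cannot hold for $\lambda\neq0$ with these conventions. If it did, combining it with the above would make $\#_\lambda$ commutative on $\mathcal S(V^*)$. So the statement itself carries a sign slip, and no amount of bookkeeping will close your step.

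\textbf{How to repair it.} To get a genuine isomorphism onto $\#_\lambda$ you must change a convention. One option is to replace $f\mapsto\hat f$ by $f\mapsto\hat f\circ C$, where $C$ is an anti-symplectic linear isometry of $V^*$, for instance $C(x,\xi)=(x,-\xi)$ in the coordinates of \S2.4. This works because $(F\circ C)\#_\lambda(G\circ C)=(F\#_{-\lambda}G)\circ C$. Another option is to reverse the sign of $\omega^*$, at the cost of flipping the sign in $v\#_\lambda w$.

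Whichever you choose, your proof should state the convention it adopts and carry out the sign check explicitly rather than assert its outcome.
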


If $\lambda=0$ then $\ast_0$ is the standard convolution product of functions on $V$.
Under Fourier transform, convolution becomes pointwise multiplication.

For $\lambda>0$ we have algebra isomorphisms,
\[ \phi_\lambda:\mcS(V,\ast_1)\cong \mcS(V,\ast_\lambda)\quad \phi_\lambda(f)(v) = \lambda^{n}f(\sqrt{\lambda} v)\]

\begin{example}\label{composition example}
Let $\alpha, \beta \ge 0$. Then for $x\in V^*$,
\[
e^{-\alpha\|x\|^2} \#_\lambda e^{-\beta \|x\|^2} = \frac{1} {(1+\lambda^2 \alpha \beta)^n} 
e^{-\frac{\alpha +\beta}{1+\lambda^2\alpha \beta} \|x\|^2}
\]
With $\lambda> 0$ and $\alpha=\beta=1/\lambda$ we get 
\[e^{-\frac{1}{\lambda}\|x\|^2}\#_\lambda e^{-\frac{1}{\lambda}\|x\|^2} = 2^{-n} e^{-\frac{1}{\lambda}\|x\|^2}\]
Multiply both sides by $2^{2n}$ to get an idempotent in $\mcS(V^*,\#_\lambda)$,
\[
s_\lambda \#_\lambda s_\lambda=s_\lambda \qquad s_\lambda(x)=2^ne^{-\frac{1}{\lambda}\|x\|^2} 
\]

\end{example}

\subsection{The Weyl  algebra}\label{sec:Weylalg}

The Weyl  algebra $\mcW_\lambda=\mcW_\lambda(V,\omega)$ is the $\ZZ$-filtered algebra consisting of smooth functions  $f:V^*\to \CC$ that have an asymptotic expansion at infinity modulo Schwartz class functions,
\begin{equation*} f \sim \sum_{j=-m}^\infty f_j  \qquad f_j(s x) = s^{-j}f_j(x)\quad x\ne 0,\;s>0\end{equation*}
where each $f_j$ is homogeneous of degree $-j$.
For Schwartz class functions $a, b\in \mcS(V^*)$, the product $a\#_\lambda b$ in the Weyl algebra is as in Definition \ref{def:moyal product}.
This  product extends by continuity to $\mcW_\lambda$. 
The constant function $1$ is the unit in $\mcW_\lambda$.
$\mcS(V^*)$ is a two-sided ideal in $\mcW_\lambda$, which we denote by $\mcS_\lambda$.

If $\lambda=0$, the product of elements in  $\mcW_0$ is the usual commutative product of smooth functions on $V^*$. 

We let $\#=\#_1$ and $\mcW=\mcW_1$.
Note that for $\lambda\ne 0$,
\[ \mcW_\lambda(V,\omega) = \mcW(V,\lambda\omega)\]
Let $\Wp_\lambda$ be the polynomial subalgebra of $\mcW_\lambda$.
If we regard  $v\in V$ as a linear function $v:V^*\to \RR$ (by evaluation), then 
\[ v\#_\lambda w = vw + \frac{i\lambda}{2}\omega(v,w)\qquad v,w\in V\]
with commutator
\[ [v,w]_{\#_\lambda} = i\lambda\omega(v,w)\]
Thus, $\Wp_\lambda$ is isomorphic to the quotient algebra
\[ \Wp_\lambda\cong (\bigotimes V \otimes_\RR \CC)/I_\lambda\]
where  $I_\lambda$ is the ideal in the complexified tensor algebra,
\[ I_\lambda = \ang{v\otimes w-w\otimes v - i\lambda\omega(v,w)\mid v,w\in V}\subset \bigotimes V\otimes_\RR\CC\]
As usual, we identify polynomials  on $V^*$ with symmetric tensors in $\mathrm{Sym} V$.
A product $v_1v_2\cdots v_p$ of linear functions $v_j:V^*\to\RR$ is identified with the symmetric tensor
\[\frac{1}{p!}\sum_{\sigma\in S_p} v_{\sigma(1)}\otimes v_{\sigma(2)}\otimes \cdots\otimes v_{\sigma(p)}\]
If $f,g$ are two polynomials on $V^*$ represented by symmetric tensors in $\mathrm{Sym} V\otimes \CC$, then  modulo $I_\lambda$ the tensor product $f\otimes g$ is equal to $f\#_\lambda g$.

\begin{remark}
Fourier transform in the $t$ variable maps an element  $(v,t) \in \Lh=V\oplus \RR$ in the Lie algebra
to the linear function $iv+i\lambda:V^*\to \CC$.
The Lie bracket $[(v,0),(w,0)]=(0,-\omega(v,w))$
agrees with the commutator  $[iv,iw]_{\#_\lambda}=-i\lambda\omega(v,w)$ in the Weyl algebra. 
Thus, Fourier transform in $t$ gives an algebra homomorphism for each $\lambda\in\RR$,
\[\UU(\Lh)\otimes_\RR \CC \to \Wp_\lambda \]
\end{remark}

\subsection{Weyl pseudodifferential calculus}

Let  $e_1,\dots,e_n$ be an  orthonormal basis of the hermitian vector space $V^{1,0}$.
If we let $f_j=Je_j$, then $e_1,\dots, e_n, f_1, \dots , f_n$ is a symplectic basis of $(V,\omega)$,
\[ \omega(e_j,e_k)=\omega(f_j,f_k) = 0\qquad \omega(e_j,f_k)=\delta_{jk}\]
For $\lambda\ne 0$, represent elements of $V$ by unbounded  self-adjoint operators on $L^2(\RR^n)$, 
\[ e_j\mapsto x_j \qquad f_j\mapsto -i\lambda\frac{\partial}{\partial x_j} \]
Then 
the tensor algebra $\bigotimes V\otimes_\RR\CC$ is represented by differential operators on $\RR^n$ with polynomial coefficients.
This representation annihilates the ideal $I_\lambda$ and we obtain a representation of the polynomial Weyl algebra $\Wp_\lambda$.

More generally, for $\lambda\ne 0$ a function $f\in \mcW_\lambda$ is represented by the pseudodifferential operator 
\[\pi_\lambda(f):C_c^\infty(\RR^n)\to C_c^\infty(\RR^n)\]
\[ (\pi_\lambda(f)u)(x)= \frac{1}{(2\pi)^n}\int e^{i(x-y)\cdot \xi} f\left(\frac{x+y}{2},\lambda\xi\right)u(y)\,d\xi\,dy \]
Here we  write  $f:V^*\to\CC$ as a function $f(x,\xi)$ of $(x,\xi)\in \RR^{2n}$, where we identify $V^*=\RR^{2n}$ using the dual basis, i.e. 
\[ V^*\mapsto \RR^{2n}\qquad z\mapsto (x,\xi)\qquad e_j(z)=x_j, f_j(z)=\xi_j\]
Then 
\[  \pi_\lambda(e_j)= x_j \qquad \pi_\lambda(f_j)= -i\lambda\frac{\partial}{\partial x_j}\]
and
\[\pi_\lambda(f\#_\lambda g)=\pi_\lambda(f)\pi_\lambda(g)\]
If $f\in \mcW^0_\lambda(V)$ is of order zero, then $\pi_\lambda(f)$ extends to a bounded linear operator on $L^2(\RR^n)$.

For an order zero element $f\in \mcW^0_\lambda$, the leading term $f_0$ in the asymptotic expansion $f\sim \sum f_j$ defines a function on the sphere $SV^*$. It is the principal symbol in the Weyl calculus.
We denote this leading term $f_0$ by $\sigma^W(f)$. We have for all $\lambda\in \RR$,
\[ \sigma^W(f\#_\lambda g) = \sigma^W(f)\sigma^W(g)\qquad f,g\in \mcW^0_\lambda\]

\begin{example}\label{ex:harmonic osc}
Let $Q_\lambda\in \Wp_\lambda$
be the element determined by the polynomial $Q_\lambda(z)=\|z\|^2$ for $z\in V^*$.
With  notation as above
\[ Q_\lambda(x,\xi)=\sum_{j=1}^n (x_j^2+\xi_j^2)\qquad Q_\lambda =\sum_{j=1}^n (e_j^2+f_j^2)\]
If $\lambda\ne 0$,  Weyl quantization represents $Q_\lambda$ by
\[ \pi_\lambda(Q_\lambda) = \sum_{j=1}^n \left(x_j^2-\lambda^2\frac{\partial^2}{\partial x_j^2}\right) \]
With a change of coordinates $x_j=y_j\sqrt{|\lambda|}$ this is 
\[ |\lambda| \sum_{j=1}^n \left( y_j^2-\frac{\partial^2}{\partial y_j^2} \right)=|\lambda| Q\]
where $Q$ is the standard harmonic oscillator on $\RR^n$.

$Q$ is essentially self-adjoint, with spectrum $\{n, n+2, n+4,\dots\}$. The eigenspace for the smallest eigenvalue $n$ is $1$-dimensional, spanned by the Gaussian $e^{-\|x\|^2/2}$ (the ground state or vacuum vector).
If $s_\lambda\in \mcW_\lambda$ is the idempotent of Example \ref{composition example},
then $\pi_\lambda(s_\lambda)$ is the projection
onto the eigenspace  of  $\pi_\lambda(Q_\lambda)$ with eigenvalue $|\lambda| n$ (the vacuum projection).

As $\lambda\to 0$ the spectrum of $Q_\lambda=|\lambda| Q$ converges to the positive real line $[0,\infty)$,
which is the range of the polynomial $Q_0\in \Wp_0$.
\end{example}

\section{Equivariant Bott Periodicity as Deformation Quantization}\label{sec:Bott per and quantization}

Throughout this section,  if $X\cong \RR^m$ is a finite dimensional real vector space,
$BX$ is the radial compactification of $X$, obtained by adjoining the sphere $SX\cong S^{m-1}$ at infinity.
At times we shall think of $BX$ as a smooth manifold with boundary.
The smooth structure at the boundary is determined by taking $1/\|x\|$ (for $x\in X$) to be a smooth coordinate transversal to the boundary $SX$.

\subsection{Quantization}\label{sec:Cstar for Weyl algebra}

We denote by  $W^*_\lambda$ the $C^*$-algebra that is the norm completion of the order zero Weyl algebra $\mcW^0_\lambda$.
If $\lambda\ne 0$, the norm of $f\in W^*_\lambda$ is the operator norm $\|\pi_\lambda(f)\|$.
If $\lambda=0$, the norm of $f\in W^*_0$ is the supremum norm $\|f\|_\infty$ of functions on $BV^*$.
Then $W^*_0=C(BV^*)$.
The $C^*$-algebras $W^*_\lambda$ are the fibers of  a continuous field $\{W^*_\lambda\}_{\lambda\in [0,1]}$  parametrized by $\lambda\in [0,1]$. 
The continuous field  $\{W^*_\lambda\}_{\lambda\in[0,1]}$ is  trivial away from $0$.
We denote $W^*=W^*_1$. For $\lambda>0$ we have isomorphisms 
\[ \phi_\lambda: W^*\cong W^*_\lambda\quad \phi_\lambda(f)(x) = \frac{1}{\lambda^n}f\left(\frac{x}{\sqrt{\lambda}}\right)\]
(See \cite{Ri89} for details).

We denote by 
\[ C^*(V,\lambda\omega)\]
the norm completion of $\mcS_\lambda=\mcS(V^*,\#_\lambda)\cong\mcS(V,\ast_\lambda)$.
It is a two-sided ideal in $W^*_\lambda$.
If $\lambda=0$ then $C^*(V,\ast_0)$ is the convolution $C^*$ -algebra of the abelian group $V$,
and we have
\[C^*(V) = C_0(V^*)\]
If $\lambda\ne 0$ then $C^*(V,\lambda\omega)$ is isomorphic, via the representation $\pi_\lambda$,  to the $C^*$-algebra of compact operators on $L^2(\RR^n)$.

For each $\lambda\in\RR$ (including $\lambda=0$) there is a short exact sequence of $C^*$-algebras,
 \begin{equation}\label{eqn:weyl short exact sequence}
    0\to C^*(V,\lambda\omega)\to W^*_\lambda\stackrel{\sigma^W}{\lra} C(SV^*)\to 0
 \end{equation} 
The quotient $C(SV^*)$ is the commutative $C^*$-algebra of continuous functions on $SV^*$.

\subsection{The Bargmann-Fock representation}

The Bargmann-Fock space $H^{BF}$  is the Hilbert space  of  square-integrable holomorphic functions on the complex vector space $V^{0,1}$ (the dual space of $V^{1,0}$), with inner product
\[ \ang{f,g} = \frac{1}{\pi^n}\int f(z)\overline{g(z)} \,e^{-\|z\|^2} dz.\]
The representation $\pi_{BF}$ of the polynomial Weyl algebra $\Wp$ on $H^{BF}$ is given by
\[ \pi_{BF}(z_j) = z_j \qquad  \pi_{BF}(\bar{z}_j) = \frac{\partial}{\partial z_j} \qquad \pi_{BF}(1) = 1 .\]
The multiplication operator $z_j$ and complex derivative $\partial/\partial z_j$
are unbounded linear operators on $H^{BF}$.
The subspace of polynomials ${\rm Sym}\,\CC^n$ is  invariant  for both operators.
The Bargmann-Fock representation extends to the tensor algebra $\bigotimes V$, and then restricts to $\Wp$ (where we identify polynomials with symmetric tensors).
The operators $z_j$ and $\partial/\partial z_j$ are adjoints, and in general $\pi_{BF}(f)^*=\pi_{BF}(\bar{f})$ for all   $f\in \Wp$.

 The Bargmann-Fock representation extends to the full Weyl algebra $\mcW$, and is unitarily equivalent, as a representation of $\mcW$, to Weyl quantization by operators on $L^2(\RR^n)$.
 For details on the Bargmann integral transform, see \cite{Ba61}.
 
 The action of the unitary group  $U=U(V^{1,0})$  on $V^{1,0}$
 determines an action on $H^{BF}$.
 The Bargmann-Fock representation gives a $U$-equivariant $\ast$-isomorphism of $C^*$-algebras,
 \[ C^*(V,\omega) \cong \KK(H^{BF})\]
 In particular, we have a canonical isomorphism,
 \[ K_0^U(C^*(V,\omega))\cong R[U(n)]\]
 where $R[U(n)]$ is the representation ring of the unitary group $U(n)$.
 Note that we may identify $R[U(V^{1,0})]=R[U(n)]$,
 since any two isomorphisms $U(V^{1,0})\cong U(n)$ determined by a choice of basis for $V^{1,0}$
differ by an inner automorphism.

\subsection{Clifford multiplication}
Let $\Lambda$ be the $2^n$-dimensional complex vector space 
\[ \Lambda=\Lambda^\bullet V^{1,0} = \bigoplus_{q=0}^n \Lambda^q V^{1,0}\]
For $z\in V^{1,0}$ we denote by $\varepsilon_z, \iota_z\in \End(\Lambda)$ the linear maps
\[
\varepsilon_z \colon \Lambda^p V^{1,0} \to \Lambda^{p+1}V^{1,0}\qquad \varepsilon_z(\alpha):= z \wedge \alpha 
\]
\[\iota_z \colon \Lambda^{p+1} V^{1,0} \to \Lambda^{p}V^{1,0}\qquad \iota_z(\alpha)=z\lefthalfcup \alpha\]
If  $v_1,\dots,v_q\in V^{1,0}$ then
\[ z\lefthalfcup (v_1\wedge \cdots\wedge v_q):=\sum_{j=1}^q (-1)^{j+1} v_1\wedge\cdots\wedge \ang{v_j,z}\wedge\cdots\wedge v_q\]
The maps $\varepsilon_z, \iota_z$ are adjoints, i.e.
\[ \ang{\varepsilon_z\alpha, \beta} = \ang{\alpha, \iota_z\beta}\qquad \alpha,\beta\in\Lambda\]
$\Lambda=\Lambda^+\oplus \Lambda^-$ is $\ZZ_2$-graded by
\[\Lambda^+=\bigoplus_{q\text{ even}} \Lambda^q V^{1,0}\qquad  \Lambda^-=\bigoplus_{q\text{ odd}} \Lambda^q V^{1,0}\]
Clifford multiplication by $z$ is
\[    c(z)=\varepsilon_z+\iota_z \in \End(\Lambda)\qquad c(z)^2=\|z\|^2\cdot I\]
For $(z,t)\in V\times\RR$ in the $(2n+1)$-dimensional real vector space $V\times \RR$, Clifford multiplication is
\begin{equation}\label{eq:clifford}
    c(z,t) = \begin{bmatrix}
        t&c(z)\\c(z)&-t
    \end{bmatrix}\in \End(\Lambda)\qquad z\in V,\;t\in \RR
\end{equation} 
with
\[ c(z,t)^2=(\|z\|^2+t^2)\cdot I=\|(z,t)\|^2\cdot I\]
Here, as in the rest of this section, the $2\times 2$ matrix in (\ref{eq:clifford}) corresponds to the direct sum $\Lambda^+\oplus \Lambda^-$.


\subsection{Quantization of Clifford multiplication}\label{sec:Bott quantization}

With $\Lambda=\Lambda^\bullet V^{1,0}$, let 
\[A_\lambda\in \Wp_\lambda \otimes \End(\Lambda)\]
be the element in the tensor algebra  defined by the linear function
\[
A_\lambda:V^*\to \End(\Lambda) \qquad A_\lambda(z):= c(z)
\]
Here we identify $V=V^*$ via the Euclidean inner product $g(v,w)=\omega(v,Jw)$.
Then $z\in V^*$ gives an element in $V=V^{1,0}$, so that we can define
\[ c(z) = \varepsilon_z+\iota_z\in \End(\Lambda)\]
As above, let $e_1,\dots, e_n$ be an orthonormal basis for the complex vector space $V^{1,0}$,
and let $f_{j}=Je_j$.
Note that 
\[\varepsilon_{f_j} = \varepsilon_{ie_j} = i\varepsilon_{e_j}\qquad \iota_{f_j}=\iota_{ie_{j}}=-i\iota_{e_j}\]
We get
\[ A_\lambda = \sum_{j=1}^n e_j\otimes (\varepsilon_{e_j}+\iota_{e_j})+\sum_{j=1}^n if_{j}\otimes (\varepsilon_{e_j}-\iota_{e_j})\]
Weyl quantization represents $A_\lambda$ by a first order differential operator on $\RR^n$, which acts on sections in the trivial vector bundle $\RR^n\times \Lambda$.
With $\lambda=1$ we get
\[ \pi_1(A_1) = \sigma + D  = 
\sum_{j=1}^n x_j \otimes (\varepsilon_{e_j}+\iota_{e_j})+ \sum_{j=1}^n \frac{\partial}{\partial x_j}\otimes (\varepsilon_{e_j}-\iota_{e_j})\]
The first term $\sigma$ is an endomorphism of the bundle $\RR^n\times \Lambda$,
\[ \sigma(x) = \sum_{j=1}^n x_j\otimes (\varepsilon_{e_j}+\iota_{e_j})=\varepsilon_x+\iota_x\in \End(\Lambda)\]
This endomorphism squares to $\sigma(x)^2=\|x\|^2$. 

The second term $D$ is the differential operator 
\[ D = d+d^*\qquad  d=\sum_{j=1}^n \frac{\partial}{\partial x_j}\otimes \varepsilon_{e_j}\qquad d^*=\sum_{j=1}^n -\frac{\partial}{\partial x_j}\otimes \iota_{e_j} \]
If we identify 
\[\Lambda^\bullet V^{1,0}=(\Lambda^\bullet\RR^n)\otimes_\RR \CC\]
then $d$ is the complexified de Rham operator and $d^*$  its formal adjoint.

\begin{lemma}\label{lem:Asq}
The square of $A_\lambda$ in $\Wp_\lambda \otimes \End(\Lambda)$ is
 \[ 
 A_\lambda^2 = Q_\lambda\otimes 1+1\otimes \lambda N
 \] 
 where $Q_\lambda(x)=\|x\|^2$ is as in Example \ref{ex:harmonic osc},
 and $N\in \End(\Lambda)$ acts on $\Lambda^kV^{1,0}$ as multiplication by $2k-n$.
\end{lemma}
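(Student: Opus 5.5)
Write $A_\lambda=\sum_{j=1}^n e_j\otimes c_j+\sum_{j=1}^n f_j\otimes d_j$, where $c_j=\varepsilon_{e_j}+\iota_{e_j}$ and $d_j=i(\varepsilon_{e_j}-\iota_{e_j})$. We square this in the tensor product algebra $\Wp_\lambda\otimes\End(\Lambda)$, where $(a\otimes b)(a'\otimes b')=(a\#_\lambda a')\otimes bb'$. Expanding gives a double sum over pairs of generators $u,u'\in\{e_j,f_j\}$ with Clifford coefficients $\gamma_u,\gamma_{u'}$. We symmetrize each pair using
\[ u\#_\lambda u' = uu'+\tfrac{i\lambda}{2}\omega(u,u'),\]
so that
\[A_\lambda^2=\tfrac12\sum_{u,u'}\Big( uu'\otimes\{\gamma_u,\gamma_{u'}\} \Big)+\tfrac12\sum_{u,u'}\tfrac{i\lambda}{2}\,\omega(u,u')\otimes[\gamma_u,\gamma_{u'}],\]
with anticommutator and commutator on the respective terms (using $uu'=u'u$ as polynomials and $\omega$ antisymmetric).

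The first step is the Clifford relations for the first sum. From the canonical anticommutation relations $\{\varepsilon_{e_j},\iota_{e_k}\}=\delta_{jk}$ and $\{\varepsilon,\varepsilon\}=\{\iota,\iota\}=0$ we get $\{c_j,c_k\}=2\delta_{jk}$, $\{d_j,d_k\}=-(-2\delta_{jk})=2\delta_{jk}$, and $\{c_j,d_k\}=i(\{\varepsilon_j,-\iota_k\}+\{\iota_j,\varepsilon_k\})=i(-\delta_{jk}+\delta_{jk})=0$. The polynomial part is therefore $\sum_j(e_j^2+f_j^2)\otimes 1=Q_\lambda\otimes1$. This agrees with Example \ref{ex:harmonic osc}, where $Q_\lambda=\sum(e_j^2+f_j^2)$ as a polynomial.

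The second step handles the commutator part. Since $\omega(e_j,f_k)=\delta_{jk}$ and all other pairings vanish, only the pairs $(e_j,f_j)$ and $(f_j,e_j)$ contribute, and together they give $\frac{i\lambda}{2}\sum_j[c_j,d_j]$. A direct computation gives
\[c_jd_j=i(\varepsilon_j+\iota_j)(\varepsilon_j-\iota_j)=i(\iota_j\varepsilon_j-\varepsilon_j\iota_j),\]
and $d_jc_j=-c_jd_j$ by anticommutation. Hence $[c_j,d_j]=2i(\iota_j\varepsilon_j-\varepsilon_j\iota_j)$. Writing $n_j=\varepsilon_j\iota_j$ for the occupation number of $e_j$, we have $\iota_j\varepsilon_j=1-n_j$, so
\[\tfrac{i\lambda}{2}[c_j,d_j]=-\lambda(1-2n_j)=\lambda(2n_j-1).\]
Summing over $j$ gives $\lambda(2\sum_j n_j-n)$, and $\sum_j n_j$ acts as $k$ on $\Lambda^k$. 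This yields $1\otimes\lambda N$.

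The main obstacle is bookkeeping of signs and factors of $\tfrac12$. These come from the convention for $\iota$ (conjugate-linear in $z$, so $\iota_{f_j}=-i\iota_{e_j}$), from the identification $V\cong V^*$, and from the factor $\frac{i\lambda}{2}$ in the Moyal product. If the overall sign comes out as $n-2k$ instead of $2k-n$, I would recheck the ordering convention of $\omega(e_j,f_k)$ against the commutator $[e_j,f_j]_{\#_\lambda}=i\lambda$. I would cross-check this against the $\lambda=1$ Weyl quantization $\pi_1(A_1)=\sigma+D$, where $\pi_1(A_1)^2$ should be the harmonic oscillator plus a zeroth order term from $[\sigma,D]$.
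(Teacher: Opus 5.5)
Your proof is correct and follows essentially the same route as the paper: you expand $A_\lambda^2$, use the Clifford relations to collapse the symmetric part to $Q_\lambda\otimes 1$, and evaluate the antisymmetric part via $[e_j,f_j]_{\#_\lambda}=i\lambda$, which reduces $N$ to $\sum_j(\varepsilon_{e_j}\iota_{e_j}-\iota_{e_j}\varepsilon_{e_j})$ acting by $2k-n$. Your signs and factors all check out and give $2k-n$ directly, so the hedging in your last paragraph is unnecessary.
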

\begin{proof}
Let $e_1,\dots, e_n$ be an orthonormal basis for the complex hermitian vector space $V^{1,0}$
and let $f_{j}=Je_j$. For convenience write $e_{n+j}=f_j$.
Then
\[ A_\lambda=\sum_{j=1}^{2n} e_j\otimes c(e_j)\]
We have $e_j\#_\lambda e_j=e_j^2$ and $c(e_j)^2=1$, while $c(e_j), c(e_k)$ anticommute if $j\ne k$.
Thus
\[ A_\lambda^2 = \sum_{j=1}^{2n} e_j^2\otimes 1 + \sum_{j<k} [e_j,e_k]_{\#_\lambda} \otimes c(e_j)c(e_k)\]
If $j<k$ then
\[ [e_j,e_k]_{\#_\lambda}=
\begin{cases}
    [e_j,f_j] = i\lambda&\text{if } k=j+n\\
    0&\text{otherwise}    
\end{cases}\]
We obtain
\[ A_\lambda^2 = Q_\lambda\otimes 1 + 1 \otimes \lambda N\]
with
\[ N = \sum_{j=1}^n ic(e_j)c(f_{j})\]
Now $f_{j}=Je_j$ is identified with $ie_j$ in $V^{1,0}$.
Then,
\[c(e_j)c(f_{j})=
(\varepsilon_{e_j}+\iota_{e_j})(i\varepsilon_{e_{j}}-i\iota_{e_{j}})
=i(-\varepsilon_{e_j}\iota_{e_{j}}+\iota_{e_j}\varepsilon_{e_j})\]
and so
\[ N=\sum_{j=1}^n \varepsilon_{e_j}\iota_{e_j}-\iota_{e_j}\varepsilon_{e_j}\in \End(\Lambda)\]
To prove the lemma we need to analyze how $N$ acts on $\Lambda^kV^{1,0}$.
Consider the elementary tensor $\alpha=e_{j_1}\wedge \cdots \wedge e_{j_k}$ with $1\le j_1<\cdots<j_k\le n$. Denote $J=\{j_1,\dots, j_k\}$.
Then
\[ N\alpha = \sum_{j\in J}\varepsilon_{e_j}\iota_{e_j}\alpha -\sum_{j\notin J}\iota_{e_j}\varepsilon_{e_j}\alpha =k\alpha - (n-k)\alpha =(2k-n)\alpha\]
Since elementary tensors of the form $e_{j_1}\wedge \cdots \wedge e_{j_k}$ span $\Lambda^kV^{1,0}$, we see that $N\alpha=(2k-n)\alpha$  for all $\alpha\in \Lambda^kV^{1,0}$.

\end{proof}

\subsection{Quantization of the Bott generator}

In general, if  $f\in \mcW_\lambda$ has  asymptotic expansion $f \sim \sum_{j=-m}^\infty f_j$
and the leading term $f_{-m}$  is an invertible function on $SV^*$,
then $f$ has an inverse in  $\mcW_\lambda$ modulo $\mcS_\lambda$.

The linear function $A_\lambda:V^*\to \End(\Lambda)$
 is invertible on $V^*\setminus\{0\}$. 
Therefore  $A_\lambda$ is invertible in $\mcW_\lambda\otimes\End(\Lambda)$ modulo $\Sch_\lambda\otimes \End(\Lambda)$.
An explicit inverse modulo $\mcS_\lambda\otimes \End(\Lambda)$ for $A_\lambda$ is
\begin{equation}\label{informal Y}
    Y_\lambda=g(A_\lambda)\qquad g(s) = \frac{1-e^{-s^2}}{s}
\end{equation}
One way to make this  precise is to use  the Weyl pseudodifferential calculus.
In Appendix \ref{sec:explicit family of idempotents} we give a detailed construction in the Weyl algebra which does not rely on  functional calculus.

$A_\lambda$ is odd with respect to the grading on $\Lambda$,
\[
A_\lambda= \begin{bmatrix} 0& A^-_\lambda\\
A^+_\lambda &0
\end{bmatrix}
\qquad
A_\lambda^2= \begin{bmatrix} A^-_\lambda A^+_\lambda &0\\
0&A^+_\lambda A^-_\lambda
\end{bmatrix}
\]
with 
\[A^+_\lambda \in \Wp_\lambda \otimes \Hom (\Lambda^+, \Lambda^-) \qquad
A^-_\lambda =(A^+_\lambda)^* \in \Wp_\lambda \otimes \Hom (\Lambda^-, \Lambda^+) \]
For each $\lambda$, $A_\lambda^+$ determines an element in algebraic $K$-theory,
 \[[A_\lambda^+]\in K_0(\mcW_\lambda, \mcS_\lambda)=K_0(\mcS_\lambda)\]
The inclusion $\mcS_\lambda \to C^*(V,\lambda\omega)$ gives elements
\[ [A_\lambda^+] \in K_0(C^*(V,\lambda\omega))\]
An explicit formula for the idempotents in $K_0(C^*(V,\lambda\omega)$ is obtained by applying (\ref{eq:K boundary}), using the parametrix $Y_\lambda$.
The resulting idempotents are equivariant for the canonical action of $U=U(V^{1,0})$,
and we obtain elements in equivariant $K$-theory
\[ [A_\lambda^+] \in K_0^U(C^*(V,\lambda\omega))\]

\begin{lemma}
With $\lambda=0$, the $K$-theory class
\[[A_0^+] \in K_0^U(C_0(V^*))\]
is the Bott generator of $K^0_U(V^*)$, represented by the formal difference of idempotents,
\[ e_\beta-\begin{bmatrix}0&0\\0&1\end{bmatrix}\in K_0^U(C_0(V^*))\]
where $e_\beta$ is the projection valued function
\begin{equation}\label{eq:Bott3}
    e_\beta:V^*\to \End(\Lambda)\qquad e_\beta(z) := \frac{1}{1+\|z\|^2}
\begin{bmatrix}
    1&c(z)\\c(z)&\|z\|^2
\end{bmatrix}
\end{equation}.
\end{lemma}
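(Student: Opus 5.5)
The plan is to compute the boundary class $\partial[A_0^+]$ directly at $\lambda=0$, where everything is commutative, and then compare with the standard Bott projection.

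\textbf{Setting up the boundary map.} When $\lambda=0$ the algebra $\mcW_0$ consists of ordinary symbol functions on $V^*$, the ideal $\mcS_0$ becomes $C_0(V^*)$ after completion, and $A_0(z)=c(z)$ is pointwise multiplication. With parametrix $Y_0=g(A_0)$, $g(s)=(1-e^{-s^2})/s$, functional calculus is literal pointwise calculus:
\[ Y_0(z)=\frac{1-e^{-\|z\|^2}}{\|z\|^2}\,c(z). \]
The standard idempotent for the index/boundary map (\ref{eq:K boundary}) of an odd operator $A$ with parametrix $Y$, remainders $S_0=1-Y^-A^+$ and $S_1=1-A^+Y^-$, is
\[
P=\begin{bmatrix} S_0^2 & S_0(1+S_0)Y^-\\ S_1A^+ & 1-S_1^2\end{bmatrix},
\qquad [A^+]=[P]-\begin{bmatrix}0&0\\0&1\end{bmatrix}.
\]
Here $S_0=S_1=e^{-\|z\|^2}$ is a scalar. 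So $P$ is an explicit $U$-equivariant projection-valued function on $V^*$ that tends to $\mathrm{diag}(0,1)$ at infinity. Note that we may use the matrix in $\End(\Lambda)$ written in full, with off-diagonal blocks $c(z)$ restricted to $\Lambda^\pm$. Since all entries commute with $c(z)$ up to scalars, $P(z)$ takes values in the algebra generated by $1$ and $c(z)$, acting on $\Lambda^+\oplus\Lambda^-$.

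\textbf{Homotopy to $e_\beta$.} Both $P$ and $e_\beta$ have the form
\[ \begin{bmatrix} a(r) & b(r)c(z)\\ b(r)c(z) & d(r)\end{bmatrix},\qquad r=\|z\|, \]
with scalar radial functions. Such a matrix is a rank-$(\dim\Lambda^+)$ projection precisely when the $2\times2$ scalar matrix $\bigl(\begin{smallmatrix} a & br\\ br & d\end{smallmatrix}\bigr)$ is a rank-one projection, using $c(z)^2=r^2$. Up to a unitary correction for the asymmetry in $P$, this amounts to a path $r\mapsto$ point of $\CC P^1\cong S^2$ running from the north pole $\mathrm{diag}(1,0)$ at $r=0$ to the south pole $\mathrm{diag}(0,1)$ at $r=\infty$. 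The idempotent $P$ is not self-adjoint, so I will first replace it by a self-adjoint projection via the standard homotopy $P\mapsto$ the range projection (equivalently $P_t$ obtained by linear homotopy through idempotents $[P]=[PP^*(1+(P-P^*)(P^*-P))^{-1}]$), which preserves the class and equivariance, and stays within the radial form above. Then both $P$ and $e_\beta$ correspond to paths in $S^2$ from pole to pole with off-diagonal entry of the same sign, i.e.\ $b>0$ for $r>0$, staying in one open hemisphere of meridians. A linear interpolation of the angular parameter gives a homotopy of projections. It is $U$-equivariant, because everything is built from $c(z)$ and radial scalars, and $c(uz)=uc(z)u^{-1}$ on $\Lambda$. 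The homotopy is constant equal to $\mathrm{diag}(0,1)$ at infinity, so it defines a homotopy in $K_0^U(C_0(V^*))$.

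\textbf{Identification with Bott.} Finally, that $e_\beta-\mathrm{diag}(0,1)$ is the Bott generator of $K^0_U(V^*)$ is the standard description of the Thom/Bott class of the complex vector space $V^{1,0}$ via the Clifford action $c(z)=\varepsilon_z+\iota_z$ on $\Lambda^\bullet V^{1,0}$. Equivalently, it is the class of the complex $(\Lambda^\bullet V^{1,0},\varepsilon_z)$ in Atiyah's picture, and $e_\beta$ is the graph projection of $c(z)$. I will cite this, noting the orientation convention: $\Lambda^+$ sits in the upper block.

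\textbf{Main obstacle.} The hard step is the sign and normalization bookkeeping in the boundary map (\ref{eq:K boundary}). In particular, I need to check that the resulting $P$ is the graph-type projection onto $\Lambda^+$ rather than $\Lambda^-$, so that the class is the Bott element and not its negative. I will check this at a single point, $z=0$, and by examining degrees on the fiber $n=1$.
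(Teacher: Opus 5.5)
Your argument is correct, but it takes a longer route than the paper. The paper needs no homotopy. It chooses the lift and parametrix in (\ref{eq:K boundary}) to fit the target, namely $x=(1+\|z\|^2)^{-1/2}c(z)$ and $y=(\sqrt{1+\|z\|^2}-1)c(z)^{-1}$. For this choice $r=s=(1+\|z\|^2)^{-1/2}$, and (\ref{eq:K boundary}) returns exactly $e_\beta$, already self-adjoint. Like you, the paper takes as standard that Clifford multiplication $\Lambda^+\to\Lambda^-$ is the Bott generator.

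You instead keep the parametrix $Y_0=g(A_0)$, so your $P$ is precisely the idempotent $e_0(1)$ of Appendix~\ref{sec:explicit family of idempotents}. You then pass to its range projection and interpolate the angle along the real meridian of $\CC P^1$. This is sound. After identifying $\Lambda^-$ with $\Lambda^+$ via the unitary $c(z)/\|z\|$, the range of $P$ is the line through $(e^{-\|z\|^2},\|z\|)$. So you interpolate between the angles $\arctan(\|z\|e^{\|z\|^2})$ and $\arctan\|z\|$. Both tend to $0$ at $z=0$ and to $\pi/2$ at infinity, uniformly along the interpolation. Continuity at $z=0$ is automatic, because the off-diagonal block has norm at most $|\sin\phi|$.

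The paper's choice buys brevity, at the price of tacitly using that the boundary class does not depend on the lift and parametrix. This includes replacing the order-one element $A_0^+$ by its normalization. Your route directly checks that the class built from $Y_\lambda$ agrees with $e_\beta$, and $Y_\lambda$ is the parametrix the paper actually uses to define $[A_\lambda^+]$. In effect, you give an explicit proof of the appendix claim $e_0(1)\simeq e_\beta$, which the paper justifies only by ``a different choice of parametrix.''

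Your sign concern is settled by $P(0)=e_\beta(0)=\mathrm{diag}(1,0)$ together with the convention fixed by (\ref{eq:K boundary}) itself. One minor imprecision: your matrices lie in the algebra generated by $1$, $c(z)$ and the grading operator, not by $1$ and $c(z)$ alone.
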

\begin{proof}
If we think of $V\times \Lambda^+$ and $V\times \Lambda^-$ as trivial vector bundles on $V$ with the diagonal action of $U$,
then $A^+_0$ is the equivariant vector bundle homomorphism $V\times \Lambda^+\to V\times \Lambda^-$
defined by Clifford multiplication $A_0(v) = c(v)$.
This is the Bott generator in compactly supported $K$-theory $K^0_U(V^*)$.

Formula (\ref{eq:K boundary}) gives the corresponding element in $K_0^U(C_0(V^*))$ if we use
\[ x=\frac{1}{\sqrt{1+\|z\|^2}}c(z)\qquad 
y= \begin{cases}
    (\sqrt{1+\|z\|^2}-1)c(z)^{-1}&\text{if }z\ne 0\\
    0&\text{if }z=0
\end{cases}\]
\end{proof}

\begin{remark}
Equivariant Bott periodicity is the isomorphism 
\[K^0_U(V^*) \cong R[U(n)]\]
Under this isomorphism, the Bott generator in $K^0_U(V^*)$ corresponds to the trivial representation in $R[U(n)]$.  See \cite{At68}.
\end{remark}

\begin{lemma}
If $\lambda>0$, the $K$-theory class
\[[A_\lambda^+] \in K_0^U(C^*(V,\lambda\omega))\cong R[U(n)]\]
corresponds to the trivial one-dimensional representation of $U(n)$.
For $\lambda<0$ it is the determinant representation.
\end{lemma}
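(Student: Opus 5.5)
The plan is to compute the class $[A_\lambda^+]$ as an equivariant Fredholm index. Since $C^*(V,\lambda\omega)\cong\KK(H^{BF})$ (after rescaling by $\phi_\lambda$), the class $[A^+_\lambda]\in K_0^U(\KK)\cong R[U(n)]$ equals the $U$-equivariant index of the quantized operator $\pi_\lambda(A^+_\lambda)$. This holds because the boundary map (\ref{eq:K boundary}) applied to an element invertible modulo compacts produces the formal difference $[\ker]-[\mathrm{coker}]$ as representations of $U$. So the task is to compute $\ker\pi_\lambda(A_\lambda^+)$ and $\ker\pi_\lambda(A_\lambda^-)$ as $U$-representations.

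By Lemma \ref{lem:Asq}, $A_\lambda^2=Q_\lambda\otimes 1+\lambda N$. Under $\pi_\lambda$, $Q_\lambda$ becomes $|\lambda|Q$, whose spectrum is $|\lambda|\{n,n+2,\dots\}$. On $\Lambda^k V^{1,0}$ the operator $\lambda N$ acts by $\lambda(2k-n)$. Hence on $L^2\otimes\Lambda^k$ the operator $\pi_\lambda(A_\lambda)^2$ is bounded below by $|\lambda|n+\lambda(2k-n)$.
\begin{itemize}
\item For $\lambda>0$ this bound equals $2k\lambda$, which vanishes only for $k=0$. There the kernel is the ground state of $Q$ tensored with $\Lambda^0=\CC$.
\item For $\lambda<0$ the bound equals $|\lambda|(2n-2k)$, so the kernel sits only in $k=n$, namely the ground state tensored with $\Lambda^nV^{1,0}$.
\end{itemize}
Since $A_\lambda$ is self-adjoint and odd, $\ker\pi_\lambda(A_\lambda)=\ker\pi_\lambda(A_\lambda)^2$ splits into even and odd parts, which are $\ker A^+$ and $\ker A^-=\mathrm{coker}\,A^+$.

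For $\lambda>0$ the kernel lies in $\Lambda^+$ (degree $0$) and is one-dimensional. For $\lambda<0$ it lies in $\Lambda^n$, which is in $\Lambda^+$ or $\Lambda^-$ according to the parity of $n$. The sign convention in (\ref{eq:K boundary}) or in the identification $K_0^U\cong R[U(n)]$ should account for this parity; I would check that the grading is such that the statement holds as written, possibly by reindexing with the sign $(-1)^n$ absorbed.

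Next comes the $U$-action. The vacuum vector $e^{-\|x\|^2/2}$ (in Bargmann–Fock, the constant function $1$) is $U$-invariant, because $U$ acts on $H^{BF}$ by composition with unitary maps and the Gaussian weight is invariant. So for $\lambda>0$ the kernel is the trivial representation. For $\lambda<0$ one would like the kernel to be $\Lambda^nV^{1,0}=\det$. The subtlety is that for $\lambda<0$ the complex structure compatible with $\lambda\omega$ is $-J$, so the Fock space is built on conjugate variables. I would verify that the vacuum is still invariant, using that the Gaussian $s_\lambda$ of Example \ref{composition example} is $U$-invariant as a function on $V^*$. The kernel is then $\CC\cdot\mathrm{vac}\otimes\Lambda^n V^{1,0}$, which carries the determinant representation.

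The main obstacle is justifying that the $K$-theory class defined through the algebraic idempotent formula with parametrix $Y_\lambda$ coincides with the equivariant index of the unbounded operator $\pi_\lambda(A^+_\lambda)$. I would handle this by replacing $\pi_\lambda(Y_\lambda)$ with the functional-calculus parametrix $g(\pi_\lambda(A_\lambda))$. Then $1-Y_\lambda A_\lambda=e^{-A_\lambda^2}$ is trace class, and the idempotent from (\ref{eq:K boundary}) is homotopic, equivariantly, to the kernel projection minus the cokernel projection. The route is the standard heat-kernel/McKean–Singer argument: let $t\to\infty$ in $e^{-tA^2}$.
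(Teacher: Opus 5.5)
Your argument is essentially the paper's. Lemma \ref{lem:Asq} and the harmonic-oscillator spectrum locate $\ker\pi_\lambda(A_\lambda)$ as the vacuum tensored with $\Lambda^0V^{1,0}$ (resp.\ $\Lambda^nV^{1,0}$). The vacuum is then taken to be $U$-invariant. Note that $U$-invariance of $s_\lambda$ by itself only shows that the vacuum \emph{line} is $U$-stable; the character on it is fixed by the Bargmann--Fock normalization, and the paper uses the same convention.

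The one difference is how the class from (\ref{eq:K boundary}) is matched with the equivariant index.
\begin{itemize}
\item The paper takes the parametrix $f(\pi_\lambda(A^-_\lambda A^+_\lambda))\pi_\lambda(A^-_\lambda)$, with $f=0$ near $0$ and $f(z)=1/z$ near the positive spectrum. Then (\ref{eq:K boundary}) returns exactly $\mathrm{diag}(s,1)$, and no homotopy is needed.
\item You use $g_t(A_\lambda)$ with $g_t(s)=(1-e^{-ts^2})/s$ and let $t\to\infty$. This is the route of Appendix \ref{sec:explicit family of idempotents}, where $B_\lambda(\tau)$ is heuristically $g_\tau(A_\lambda)R_\lambda(\tau)$.
\end{itemize}
Your route works, but what drives it is norm convergence of the idempotents from the spectral gap: the nonzero spectrum of $\pi_\lambda(A_\lambda)^2$ is $\ge 2|\lambda|$. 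It is not a McKean--Singer trace identity. Your route has the small advantage of showing directly that the paper's own $Y_\lambda$ gives the same class, through a $U$-invariant path.

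On parity you are right, and you should not try to absorb the sign into a convention. Take $\lambda<0$ and $n$ odd. Then $\Lambda^nV^{1,0}\subset\Lambda^-$, so $\ker\pi_\lambda(A^+_\lambda)=0$, while $\ker\pi_\lambda(A^-_\lambda)$ is the vacuum tensored with $\Lambda^nV^{1,0}$. With a spectral parametrix, (\ref{eq:K boundary}) gives $e=\mathrm{diag}(P_{\ker A^+},\,1-P_{\ker A^-})$, so the class is $-[\det]$.

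Changing the equivariant identification $C^*(V,\lambda\omega)\cong\KK(H)$ only twists by a character of $U$, which cannot change a sign. So the correct conclusion for $\lambda<0$ is $(-1)^n[\det]$. The paper's proof overlooks this: it asserts that the $n$-form spans $\ker\pi_\lambda(A^+_\lambda)$ for every $n$. The case $\lambda>0$ is the only one used later (Proposition \ref{prop:bott to vacuum}), and it is unaffected.
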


\begin{proof}
By  Lemma \ref{lem:Asq},
\[ \pi_\lambda(A_\lambda^2) = \pi_\lambda(Q_\lambda) \otimes 1 + 1\otimes \lambda N\]
If $\lambda\ne 0$ then $\pi_\lambda(Q_\lambda)$ is equivalant to $|\lambda|Q$, where $Q$ is the harmonic oscillator on $\RR^n$ (see Example \ref{ex:harmonic osc}).
$Q$ is a self-adjoint operator with spectrum $\{n, n+2, n+4, \cdots\}$.
The eigenspace of $Q$ with eigenvalue $n$ is one dimensional,
and is spanned by the Gaussian $e^{-\|x\|^2/2}$.

If $\lambda>0$, $\pi_\lambda(A_\lambda^+)$ is an (unbounded) Fredholm operator with zero cokernel and $1$-dimensional  kernel  spanned by the $0$-form $e^{-\|x\|^2/2}$.
The unitary group $U=U(V^{1,0})$ acts trivially on the kernel.
Thus, the equivariant index of  $\pi_\lambda(A^+_\lambda)$ is the trivial representation of $U$.

To apply (\ref{eq:K boundary}) we need a parametrix for $A^+_\lambda$.
The positive operator $\pi_\lambda(A^-_\lambda A^+_\lambda) \ge 0$ has discrete spectrum.
Let $f$ be a holomorphic  function defined in a neighborhood of the spectrum that is $0$ in a neighborhood of $0$, and $1/z$ in a neighborhood of the positive spectrum.
Applying holomorphic functional calculus,
we obtain a Weyl pseudodifferential operator $f(\pi_\lambda(A^-_\lambda A^+_\lambda))$ of order $-2$.
Then the order $-1$ operator 
\[f(\pi_\lambda(A^-_\lambda A^+_\lambda))\pi_\lambda(A^-_\lambda) \]
is a parametrix for $\pi_\lambda(A_\lambda^+)$ with corresponding Weyl symbol in $\mcW^{-1}_\lambda\otimes \Hom(\Lambda^-,\Lambda^+)$.
Then the idempotent $e$ in formula (\ref{eq:K boundary}) is 
\[ e=\begin{bmatrix} s&0\\0&1 \end{bmatrix}\]
where $s$ is projection onto the kernel of $\pi_\lambda(A^+)$.

If $\lambda<0$ the kernel of $\pi_\lambda(A_\lambda^+)$ is spanned by $e^{-\|x\|^2/2}$ times the $n$-form  $e_1\wedge \cdots\wedge e_n$.
An element $g\in U$ acts by the determinant $\det{g}$.

\end{proof}

\begin{proposition}\label{prop:bott to vacuum}
The  map in $K$-theory,
\[ K^0_U(V^*)=K_0^U(C^*(V))\to K_0^U(C^*(V,\omega))\cong R[U(n)]\]
determined by the deformation $\{C^*(V,\lambda\omega)\}_{\lambda\in [0,1]}$
maps the Bott generator  to the trivial representation of $U(n)$.
\end{proposition}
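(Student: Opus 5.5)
The plan is to exhibit the Bott generator and the vacuum class as the endpoints of one continuous section of $K$-theory classes for the continuous field $\{C^*(V,\lambda\omega)\}_{\lambda\in[0,1]}$, namely $\lambda\mapsto [A^+_\lambda]$. Recall how the deformation map is defined. Let $\mathcal{A}$ be the $C^*$-algebra of continuous sections of the field over $[0,1]$. Since the field is trivial on $(0,1]$, the evaluation map $\mathrm{ev}_0:\mathcal{A}\to C^*(V)$ has contractible kernel $C_0((0,1],C^*(V,\omega))$, so $\mathrm{ev}_0$ is an isomorphism in ($U$-equivariant) $K$-theory. The deformation map is $\mathrm{ev}_1\circ \mathrm{ev}_0^{-1}$. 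It therefore suffices to produce a single $U$-equivariant class in $K_0^U(\mathcal{A})$ whose value at $\lambda=0$ is the Bott generator $[A^+_0]$ and whose value at $\lambda=1$ is $[A^+_1]$. The preceding lemmas identify $[A^+_1]$ with the trivial representation of $U(n)$, which finishes the argument.

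To build this class, I would treat $A=\{A_\lambda\}$ as a single element of the family of Weyl algebras. The polynomial $A_\lambda(z)=c(z)$ is literally the same function of $z$ for every $\lambda$; only the product $\#_\lambda$ changes. The next step is a parametrix $Y=\{Y_\lambda\}$ that depends continuously on $\lambda$ in the field sense, with the remainders
\[ 1-Y_\lambda\#_\lambda A_\lambda,\qquad 1-A_\lambda\#_\lambda Y_\lambda \]
forming continuous sections of $\{C^*(V,\lambda\omega)\otimes\End(\Lambda)\}$. The natural candidate is $Y_\lambda=g(A_\lambda)$ with $g(s)=(1-e^{-s^2})/s$, as in (\ref{informal Y}), made precise via the explicit construction of Appendix \ref{sec:explicit family of idempotents}. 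There one uses Lemma \ref{lem:Asq}, $A_\lambda^2=Q_\lambda\otimes 1+\lambda N$, and the fact that Gaussians in $Q_\lambda$ have the closed-form $\#_\lambda$-products of Example \ref{composition example}. These products depend continuously on $\lambda$, including at $\lambda=0$.

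Feeding $A_\lambda$ and $Y_\lambda$ into formula (\ref{eq:K boundary}) then yields idempotents $e_\lambda$ in the unitization of $C^*(V,\lambda\omega)\otimes\End(\Lambda)$. These are $U$-equivariant, since all ingredients are built from the $U$-invariant data $c$, $\|z\|^2$ and $N$. Their entries are polynomial expressions in $A_\lambda$, $Y_\lambda$ and the remainders, so $\lambda\mapsto e_\lambda$ is a continuous section, and $e_\lambda-\mathrm{diag}(0,1)$ defines a class in $K_0^U(\mathcal{A})$.

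The endpoints are then checked as follows. At $\lambda=0$ the idempotent $e_0$ represents $[A^+_0]$, which is the Bott generator by the lemma above. Note that $K$-theory classes do not depend on the choice of parametrix, so using $g(A_0)$ instead of the $x,y$ of that proof is harmless. At $\lambda=1$ the idempotent $e_1$ represents $[A^+_1]$, which is the trivial representation by the other lemma.

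The main obstacle is the continuity at $\lambda=0$ of the remainder terms as sections of the field, i.e.\ that $e^{-A_\lambda^2}$ (or its appendix substitute) varies continuously in the $C^*$-norms as $\lambda\to 0$, where the norm jumps from operator norm to sup norm. I would first try to reduce this to Rieffel's continuity \cite{Ri89} for Schwartz functions. The remainders are explicit Schwartz functions built from Gaussians $e^{-\alpha\|x\|^2}$ with $\lambda$-dependent coefficients, together with the finite-dimensional factor $e^{-\lambda N}$. For Schwartz functions depending smoothly on $\lambda$, continuity of the field of norms is exactly the content of Rieffel's strict deformation quantization.
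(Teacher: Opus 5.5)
Your proposal is correct and follows the paper's own proof. The class is carried by the continuous section of idempotents built from $A_\lambda$ and the parametrix $Y_\lambda=g(A_\lambda)$ via (\ref{eq:K boundary}), the endpoints are identified by the two preceding lemmas, and continuity at $\lambda=0$ comes from the Mehler-formula construction of Appendix \ref{sec:explicit family of idempotents} (where heuristically $B_\lambda=Y_\lambda R_\lambda$) together with Rieffel's continuity for Schwartz families.
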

\begin{proof}
With the family of parametrices $Y_\lambda$ as in (\ref{informal Y}),
the family of idempotents corresponding to  $A_\lambda^+$ defined by (\ref{eq:K boundary}) is a continuous section in the field 
\[\{C^*(V,\lambda\omega)\otimes\End(\Lambda)\}_{\lambda\in [0,1]}\]
For a proof that does not rely on properties of the Weyl  pseudodifferential calculus, see Appendix \ref{sec:explicit family of idempotents}.

\end{proof}

\subsection{The equivariant Thom isomorphism}

Let $G$ be a compact Lie group, 
and let $E$ be a $G$-equivariant symplectic vector bundle on a compact smooth $G$-manifold $X$.
Assume that a compatible $G$-equivariant complex structure $J$ has been chosen in the fibers of $E$.

We denote by $C^*(E,\omega)$  the $C^*$-algebra of continuous sections in the field of $C^*$-algebras  $\{C^*(E_x,\omega_x)\}_{x\in X}$. 
Alternatively, $C^*(E,\omega)$ is the $C^*$-algebra for the groupoid $E$ (with abelian fibers $E_x$)   twisted by the groupoid $2$-cocycle 
\[E^{(2)}=E\times_X E\to U(1)\qquad (v,w)\mapsto \exp(i\omega_x(v,w))\qquad v,w\in E_x\]
The family of Bargmann-Fock spaces $\{H_x^{BF}\}_{x\in X}$ associated to the fibers $E_x\cong \CC^n$
form a continuous field of Hilbert spaces.
Let $\mcE^{BF}$ be the corresponding  Hilbert $C(X)$-module.
The Bargmann-Fock representations of the $C^*$-algebras $C^*(E_x,\omega_x)$ on the Hilbert spaces $H_x^{BF}$ determine an isomorphism,
\[ C^*(E,\omega)\cong \KK(\mcE^{BF})\]
Thus, we have Morita equivalence $C^*(E,\omega)\sim C(X)$.
The Morita equivalence bimodule $\mcE^{BF}$ is $G$-equivariant.

Deforming the symplectic form $\omega$,
the family $\{C^*(E,\lambda \omega)\}_{\lambda\in [0,1]}$
is  a continuous field of $C^*$-algebras that is trivial away from $0$.
At $\lambda=0$ we have $C_0(E^*)$.

\begin{proposition}\label{prop:Thom}
The map in equivariant $K$-theory
\[ K^0_G(E^*)\to K_0^G(C^*(E,\omega))\cong K^0_G(X)\]
determined by the deformation $\{C^*(E,\lambda \omega)\}_{\lambda\in [0,1]}$
maps the Thom class of the complex vector bundle $E$ to the trivial line bundle on $X$.
\end{proposition}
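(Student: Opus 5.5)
The proof is the fiberwise argument of Proposition \ref{prop:bott to vacuum}, carried out uniformly over $X$. Let $\Lambda_E=\Lambda^\bullet E^{1,0}$ denote the bundle of exterior algebras of the complex fibers. The Thom class of $E$ in $K^0_G(E^*)$ is the class of Clifford multiplication
\[ A_0^+:E^*\times_X\Lambda^+_E\to E^*\times_X\Lambda^-_E,\qquad z\mapsto c(z). \]
This is the same linear element $A_0$ as in Section \ref{sec:Bott quantization}, now read in each fiber $E_x$ (identified with $E_x^*$ via the metric $g_x$). For each $\lambda\in[0,1]$ I would form $A_\lambda$ fiberwise, as a section of the bundle of polynomial Weyl algebras $\Wp_\lambda(E_x,\omega_x)\otimes\End(\Lambda_{E,x})$. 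Because $A_\lambda$ is defined invariantly (no basis enters), it is a $G$-equivariant, continuous, indeed smooth, section over $X$.

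\textbf{Step 1.} Form the family of idempotents. Using the parametrices $Y_\lambda=g(A_\lambda)$ of (\ref{informal Y}), or more safely the explicit construction of Appendix \ref{sec:explicit family of idempotents}, and applying formula (\ref{eq:K boundary}) fiberwise, I obtain idempotents $e_\lambda$ in $C^*(E,\lambda\omega)\otimes\End(\Lambda)$, or rather in its unitization twisted by the bundle $\End(\Lambda_E)$, which is a $G$-equivariant corner of a matrix algebra after embedding $\Lambda_E$ into a trivial $G$-bundle. That construction is built from the universal formulas in $x\in X$, so it depends continuously on $x$ as well as on $\lambda$. It therefore gives a continuous section of the field $\{C^*(E,\lambda\omega)\otimes\End(\Lambda_E)\}_{\lambda\in[0,1]}$. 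At $\lambda=0$, $e_0$ is the pointwise projection $e_\beta$ of (\ref{eq:Bott3}), so $[e_0]-[\Lambda^-]$ is the equivariant Thom class. By the definition of the deformation map, the image of the Thom class is therefore $[e_1]-[\Lambda^-_E]$.

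\textbf{Step 2.} Identify $[e_1]$. At $\lambda=1$, Lemma \ref{lem:Asq} holds fiberwise: $A_1^2=Q\otimes1+1\otimes N$. As in the $\lambda>0$ lemma, the kernel of $\pi_{BF}(A^+_1)$ in $H^{BF}_x\otimes\Lambda^+_x$ is spanned by the vacuum vector in degree $0$, and the cokernel vanishes. Hence $e_1=\mathrm{diag}(s,1)$, where $s$ is the fiberwise vacuum projection. The family of vacuum vectors, which is the constant function $1$ in each Bargmann-Fock space $H^{BF}_x$, is a $G$-invariant nowhere-vanishing section of $\mcE^{BF}$ of norm one. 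It spans a $G$-equivariant trivial line subbundle on which $G$ acts trivially, because the unitary structure-group action fixes the vacuum.

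\textbf{Step 3.} Compute the class in $K_0^G(C^*(E,\omega))$. Under the Morita equivalence $C^*(E,\omega)\cong\KK(\mcE^{BF})\sim C(X)$, a rank-one projection onto the span of a $G$-invariant unit section $\xi$ of $\mcE^{BF}$ corresponds to the module $\xi\cdot C(X)\cong C(X)$ with trivial $G$-action. Its class is therefore the trivial line bundle. The $\Lambda^-$ summand cancels, so the image is $[1]$.

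\textbf{Main obstacle.} The hardest part is Step 1, namely continuity of $\lambda\mapsto e_\lambda$ at $\lambda=0$ in the field norm, uniformly in $x$. Proposition \ref{prop:bott to vacuum} handles this for a single fiber. Since $X$ is compact and the construction is equivariant under the structure group $U(n)$, I would pass to the unitary frame bundle $P$ of $E$, writing $C^*(E,\lambda\omega)\cong(C(P)\otimes C^*(\CC^n,\lambda\omega))^{U(n)}$. The fiberwise continuous section from Proposition \ref{prop:bott to vacuum} is $U(n)$-equivariant and constant in the $P$-direction, so it descends to the required continuous section. This reduction settles both continuity and $G$-equivariance at once.
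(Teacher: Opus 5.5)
Your proposal is correct and follows the same route as the paper, whose proof just says the result follows immediately from Proposition \ref{prop:bott to vacuum} applied fiberwise; your frame-bundle reduction (a $U(n)$-invariant section constant along $P$) and your Morita identification of the vacuum projection with the trivial line bundle spell out what that ``immediately'' means. One small imprecision: $e_0=e_\beta$ and $e_1=\mathrm{diag}(s,1)$ hold only as equalities of $K$-theory classes, not of idempotents, because they come from different parametrices (in Appendix \ref{sec:explicit family of idempotents}, the second one is reached only after the $\tau\to\infty$ homotopy).
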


\begin{proof}
This follows immediately from Proposition \ref{prop:bott to vacuum}.
\end{proof}

\section{Deformation From the Dirac Symbol to the Szeg\"o Symbol}\label{sec:symbols in K theory}

\subsection{Contact manifolds}

Throughout this paper, $M$ is a compact $(2n+1)$-dimensional smooth manifold equipped with a contact form $\theta$.
A contact form $\theta$  is a differential $1$-form with the property that $\theta(d\theta)^n$ is a volume form, i.e. a nowhere vanishing $2n+1$-form.
We choose the orientation of $M$ such that 
\[\int_M \theta(d\theta)^n>0\]
Let $H\subset TM$ be the vector bundle of tangent vectors in the kernel of $\theta$.
$\theta$ is a contact form if and only if $d\theta$ restricts to a symplectic form in  the fibers of $H$,
\[ \omega = d\theta|_{H}\]
Throughout we assume that a complex structure $J$ has been chosen for $H$ that is compatible with   $\omega$, i.e.
\[\omega(Jv,Jw)=\omega(v,w)\qquad \omega(v,Jv)>0\;\text{ for } v\ne 0\]
Then the fibers of $H$ are complex  vector spaces, where $i$ acts as $J$.
We denote this complex vector bundle by $H^{1,0}$.

The Reeb  field $R$ is the unique vector field on $M$ with $d\theta(R,\,.\,)=0$ and $\theta(R)=1$.
The Reeb field trivializes the normal bundle $TM/H\cong \underline{\RR}$.
We get an isomorphism
\[ TM\cong H^{1,0}\oplus \underline{\RR}\]
Thus, $M$ is a stably almost complex manifold, and in particular $M$ is a Spin$^c$ manifold.
The Spin$^c$ structure for $TM$ is the direct sum of the canonical Spin$^c$ structure of the complex vector bundle $H^{1,0}$ and the trivial line bundle $\underline{\RR}=M\times\RR$.
The spinor bundle of $M$ is $\Lambda^\bullet H^{1,0}$.

The Todd class $\Td(M)$ of a contact manifold $M$ is the Todd class of the complex vector bundle $H^{1,0}$,
\[ \Td(M)=\Td(H^{1,0})\]
We assume that a compact Lie group $G$ acts smoothly on $M$, preserving the contact form $\theta$.
Then the symplectic form $\omega=d\theta|_{H}$ is $G$-invariant.
We choose the compatible complex structure $J$ to be also $G$-invariant, making $H^{1,0}$ a $G$-equivariant complex vector bundle.

\begin{lemma}
There exists a $G$-invariant complex structure $J$ on $H$ that is compatible with $\omega=d\theta|_H$.
\end{lemma}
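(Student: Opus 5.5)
The plan is the standard averaging and polar decomposition argument, carried out pointwise on the fibers of $H$ so that it is manifestly canonical and hence smooth and $G$-equivariant.

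First, choose a $G$-invariant Riemannian metric on the vector bundle $H$. Since $G$ is compact, this is obtained by averaging any smooth fiber metric $g_0$ over $G$ with Haar measure:
\[ g_1(v,w)=\int_G g_0(h_*v,h_*w)\,dh .\]
This makes sense because $G$ preserves $\theta$, so each $h\in G$ maps $H=\ker\theta$ to itself. Smoothness of $g_1$ follows from smoothness of the action and compactness of $G$.

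Next, define the skew-adjoint bundle endomorphism $A$ of $H$ by
\[ \omega(v,w)=g_1(Av,w).\]
Since $\omega$ is nondegenerate on each fiber, $A$ is invertible. The operator $-A^2=A^*A$ is positive definite, so it has a unique positive square root $|A|=\sqrt{-A^2}$. Set
\[ J:=A|A|^{-1}.\]
Because $A$ commutes with $|A|$ and $A^*=-A$, the endomorphism $J$ is orthogonal and $J^2=-1$. Compatibility with $\omega$ is then a short check:
\[ \omega(v,Jv)=g_1(Av,A|A|^{-1}v)=g_1(|A|^{1/2}v,|A|^{1/2}v)>0 \quad\text{for } v\neq 0,\]
using $A^*A=|A|^2$ and the fact that $|A|$ commutes with $A$. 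The identity $\omega(Jv,Jw)=\omega(v,w)$ follows similarly, since $J$ commutes with $A$ and is $g_1$-orthogonal.

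Finally, $G$-invariance. Both $g_1$ and $\omega=d\theta|_H$ are $G$-invariant, and the latter holds because $G$ preserves $\theta$. Hence $A$ commutes with each $h_*$. Uniqueness of the positive square root then gives that $|A|$, and therefore $J$, commute with $h_*$ as well. Smoothness of $J$ follows because the positive square root is a smooth (real-analytic) function on positive definite matrices.

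No step is a real obstacle. The only points needing care are smoothness and equivariance of the square root, and both follow from the uniqueness of the positive square root.
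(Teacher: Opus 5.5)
Your proof is correct and is essentially the paper's argument: average to obtain a $G$-invariant fiber metric, represent $\omega$ by an invertible skew-adjoint endomorphism $A$, and take its polar part, with $G$-invariance following from uniqueness of the positive square root. Your $A$ (defined by $\omega(v,w)=g_1(Av,w)$) is the negative of the paper's (defined by $\omega(v,w)=q(v,Aw)$), so your $J=A|A|^{-1}$ is the same as the paper's $J=-A|A|^{-1}$.
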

\begin{proof}
Choose a $G$-invariant positive definite inner product $q$ in the fibers of $H$.
Then with respect to $q$ we have 
\[\omega(v,w)=q(v,A_pw)\qquad v,w\in H_p,\,p\in M\] 
where $A_p\in \End(H_p)$ depends smoothly on $p$.
Since $\omega$ is nondegenerate, $A_p$ is invertible.
From $\omega(v,w)=-\omega(w,v)$ we get  $A^T=-A$, where the transpose is with respect to $q$.
Now let $J=-A|A|^{-1}$ with $|A|=\sqrt{A^TA}=\sqrt{-A^2}$.
Then $J^2=A^2|A|^{-2}=-I$.
Since $\omega, q$ are $G$-invariant, so is $A$ and therefore also $J$.
We have $J^T=-|A|^{-1}A^T=-J=J^{-1}$, i.e. $J$ is orthogonal for $q$. 
Then
\[\omega(Jv,Jw)=q(Jv,AJw)=q(Jv,JAw)=q(v,Aw)=d\theta(v,w)\]
Finally, $\omega(v,Jv)=q(v,-A^2|A|^{-1}v)=q(v,|A|v)>0$ if $v\ne 0$.
Thus, $J$ is compatible with $\omega$. 

\end{proof}

\subsection{Heisenberg calculus}

This section serves to fix notation and conventions.
For a quick introduction to the Heisenberg calculus, see section 2 in \cite{GvE22}.
For full details see \cites{Ta84, BG88, CGGP92, EMxx, EM98, Po08}.
\vskip 6pt
For a Heisenberg pseudodifferential operator $\mcL$ of order $d$ on a contact manifold $M$,
the principal symbol is a pair
\[\sigma^d_H(\mcL)=(\sigma_+(\mcL),\sigma_-(\mcL))\]
of smooth complex-valued functions $\sigma_\pm(\mcL)$ on the total space of $H^*$.
For each $x\in M$, the restriction of  $\sigma^d_H(\mcL)$ to the fiber $H^*_x$ is a pair of elements of order $d$ in the Weyl algebras associated to the symplectic vector spaces $(H^*_x,\omega)$ and $(H^*_x,-\omega)$ with $\omega=d\theta$,
\[ \sigma^d_+(\mcL, x)\in \mcW^d(H^*_x, \omega)\qquad  \sigma^d_-(\mcL,x)\in \mcW^d(H^*_x,-\omega)\]
The product in $\mcW(H^*_x,\omega)$ is the Moyal product $\#=\#_1$ with $\lambda=1$, as in Definition \ref{def:moyal product} . The product in $\mcW(H^*_x,-\omega)$ is the Moyal product $\#_{-1}$ with $\lambda=-1$.

If $X\in \Gamma(H)$ is a vector field on $M$ tangent to $H$,
then $X$ is a differential operator of order $1$, with principal Heisenberg symbol
\[ \sigma^1_H(X) = (iX,iX)\]
where $iX$ is viewed as a linear function in the fibers of $H^*$.
The Reeb field $R$ on $M$ is a differential operator of order two  in the Heisenberg calculus, with principal symbol
\[ \sigma_H^2(R) = (i,-i)\]
From here on we restrict ourselves to the algebra of order zero Heisenberg pseudodifferential operators.
The order zero Weyl algebra elements $\sigma_+(\mcL), \sigma_-(\mcL)$ have  asymptotic expansions which contain only {\em even} terms,
\begin{equation*} \sigma_\pm\sim \sum_{j=0}^\infty (\sigma_\pm)_{2j}\end{equation*}
The asymptotic  expansions of $\sigma_+(\mcL,x)$ and $\sigma_-(\mcL,x)$ are related by 
\[
(\sigma_-)_{2j} = (-1)^j(\sigma_+)_{2j}
\]
In particular, the leading terms are equal, $(\sigma_+)_0=(\sigma_-)_0$.

\vskip 6pt
We denote the algebra of Heisenberg principal symbols of order zero by $\SymH^0$.
Elements in $\mcW^0(H^*_x,\pm\omega)$ are represented, by the Weyl calculus, as bounded operators on Hilbert space.
We define the norm of an element $\sigma=(\sigma_+,\sigma_-)\in \SymH^0$
to be the supremum  of the operator norms $\|\sigma_+(x)\|$, $\|\sigma_-(x)\|$
over all $x\in M$.
Then $\SymH^*$ is the $C^*$-algebra obtained as the norm completion of $\SymH^0$.

\subsection{Szeg\"o projections}

Boutet de Monvel's theorem, as well as the Baum-Douglas-Taylor theorem, assume that $M=\partial \Omega$ is the strictly pseudoconvex boundary of a complex domain $\Omega$.
The Szeg\"o projection $S$ is 
the projection of $L^2(M)$ onto the Hardy space $H^2(M)$,
which is the closed subspace of boundary values (in the non-tangential sense) of holomorphic functions on $\Omega$ with finite Hardy norm.

A strictly pseudoconvex boundary is a contact manifold.
Choose a defining function
\[\rho:\text{neighborhood of } \bar\Omega\to \RR\quad \Omega=\{\rho< 0\}\quad M=\{\rho=0\}\quad d\rho\ne 0 \text{ on }M\]
Then $\theta = -d\rho\circ J$ is a contact form on $M$.
Strict pseudoconvexity is equivalent to the condition that the Levi form $d\theta(\cdot,J\cdot)$ is positive definite on $H=\ker \theta$. In particular, $d\theta$ restricts to a symplectic form on $H$ and
   $J$  is compatible with $d\theta$. 

The Spin$^c$ structure determined by  the contact (CR) structure on $M$ agrees with the  Spin$^c$ structure induced on $M$  as the boundary of the complex domain $\Omega$.

The Szeg\"o projection $S$ is a pseudodifferential operator of order zero in the Heisenberg calculus on $M$.
The principal Heisenberg symbol of $S$ is $\sigma_H(S) = (\sigma_+(S), \sigma_-(S))$ with $\sigma_-(S)=0$ and
\begin{equation}\label{eqn:szego symbol1}
    \sigma_+(S)=s\qquad s(x,v)= 2^ne^{-\|v\|^2}\qquad x\in M, v\in H_x^*
\end{equation}
See Example \ref{composition example}.

Not every contact manifold has a complex filling.
Following Epstein and Melrose  \cite{EMxx}, a generalized Szeg\"o projection on a contact manifold is any order zero pseudodifferential operator $S$ in the Heisenberg calculus
with principal symbol $(s,0)$ as in (\ref{eqn:szego symbol1}), and such that $S=S^*=S^2$.
To obtain a projection $S$, choose an arbitrary operator with symbol $(s,0)$ and apply holomorphic functional calculus. 

If a compact Lie group $G$ acts on $M$, the principal symbol $\sigma_H(S)$ is $G$-invariant.
We can choose $S$ to be $G$ invariant.

The principal Heisenberg symbol of the Szeg\"o projection $S$ determines an element in $K$-theory,
\[ [\sigma_H(S)]\in K_0^G(\SymH^*)\]



\subsection{The symbol of the Dirac operator}\label{sec:symbol of Dirac}

A contact manifold is stably almost complex, and hence Spin$^c$.
The Spin$^c$ structure of $TM$ is the direct sum of the Spin$^c$ structure of the complex vector bundle $H^{1,0}$ and the trivial Spin$^c$ structure of the trivial line bundle $\underline{\RR}$.
As an odd dimensional Spin$^c$ manifold, $M$ has a self-adjoint Dirac operator $D$, acting on sections of the (ungraded) spinor bundle $\slashed{S}=\Lambda^\bullet H^{1,0}$.
The Spin$^c$ structure of $M$ is $G$-invariant, and we can choose $D$ to be $G$-equivariant.

The principal symbol of the Dirac operator $D$ is 
\[
    \sigma(D)(x,\xi)= c(\xi)\qquad (x,\xi)\in S^*M
\]
Clifford multiplication $c(\xi)$ is as in (\ref{eq:clifford}), with $V=H^{1,0}_x$.
If $\|\xi\|=1$ then $c(\xi)$ is self-adjoint and unitary.
In particular, $c(\xi)$ has eigenvalues $\pm 1$.

$D$ is  self-adjoint with discrete spectrum.
Let $P_+$ be orthogonal projection of $L^2(M,\Lambda^\bullet H^{1,0})$
onto the direct sum of the eigenspaces of $D$ for  positive eigenvalues.
If $\|\xi\|=1$ then $\sigma(P_+)(x,\xi)$ is projection onto the $+1$ eigenspaces of $c(\xi)$,
\[ \sigma(P_+)(x,\xi)=\frac{1}{2}(1+c(\xi))\qquad (x,\xi)\in S^*M\]
The projection valued function $\sigma(P_+)$ determines a class in $K$-theory,
\[ [\sigma(P_+)]\in K^0_G(S^*M)\]
Let $F$ be the bounded operator
\[F=D(1+D^2)^{-1/2}\]
$F$ is an  elliptic pseudodifferential operator of order zero.
The full symbol of  $F$ determines an element
\[ [\sigma(F)]\in K^1_G(T^*M)\]
represented by the self-adjoint function
\[ f:T^*M\to \End(\Lambda^\bullet H^{1,0})\qquad f(\xi)=\chi(c(\xi))\]
with $\chi(s)=s(1+s^2)^{-1/2}$.

The two $K$-theory classes are related via the boundary map in the six-term exact sequence for the pair of compact spaces $(B^*M, S^*M)$,
\[ \partial:K^0_G(S^*M)\to K^1_G(T^*M)\qquad \partial([\sigma(P_+)])=[\sigma(F)]\]
See  (\ref{eq:K boundary 3}) in the appendix.

\subsection{Stereographic projection}\label{sec:Bott2}

Stereographic projection of the sphere $S(V\times\RR)$ onto $V$ through the South pole $p_s=(0,-1)\in S(V\times\RR)$ is
\[ \rho:S(V\times\RR)\setminus \{p_s\}\to V\qquad \rho(z,t)= \frac{z}{1+t} \qquad z\in V,\;t\in \RR\]
The map $\rho$ extends to the one-point compactification of $V$,
\[ \rho:S(V\times\RR)\to V^+=V\sqcup\{p_\infty\}\qquad \rho(p_s)=p_\infty\]
Note that the function $e_\beta$ of (\ref{eq:Bott3}) extends to $V^+$, since 
\[ \lim_{\|z\|\to \infty} e_\beta(z) = \begin{bmatrix}0&0\\0&1\end{bmatrix}\]

\begin{lemma}\label{lem:stereo}
Pullback under  stereographic projection,
\[\rho^*:K^0_U(V^+)\to K^0_U(S(V\times\RR))\]
gives $\rho^*(e_\beta)= e$,
where $e$ is the projection valued function
\begin{equation}\label{eq:Bott1}
e:S(V\times\RR)\to \End(\Lambda^\bullet V^{1,0})\qquad 
 e(z,t) = \frac{1}{2}(1+c(z,t))
 \end{equation}
\end{lemma}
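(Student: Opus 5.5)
The plan is a direct pointwise computation: evaluate $e_\beta(\rho(z,t))$ and check that it equals $\frac{1}{2}(1+c(z,t))$ at every point of the sphere. This is an identity of projection-valued functions, not merely of $K$-theory classes. Both functions are built from $U$-equivariant Clifford multiplication, so the equality automatically holds in $K^0_U$.

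Fix $(z,t)\in S(V\times\RR)$ with $(z,t)\ne p_s$, so $t\ne -1$ and $\|z\|^2+t^2=1$. Put $w=\rho(z,t)=z/(1+t)$. The key step is
\[
\|w\|^2=\frac{\|z\|^2}{(1+t)^2}=\frac{1-t^2}{(1+t)^2}=\frac{1-t}{1+t},
\]
which gives $1+\|w\|^2=\frac{2}{1+t}$. Substituting into (\ref{eq:Bott3}) term by term:
\begin{itemize}
\item the $(1,1)$ entry is $\frac{1}{1+\|w\|^2}=\frac{1+t}{2}$;
\item the off-diagonal entries are $\frac{c(w)}{1+\|w\|^2}=\frac{1+t}{2}\cdot\frac{c(z)}{1+t}=\frac{1}{2}c(z)$, using linearity of $c$;
\item the $(2,2)$ entry is $\frac{\|w\|^2}{1+\|w\|^2}=\frac{1-t}{2}$.
\end{itemize}
Hence
\[
e_\beta(w)=\frac{1}{2}\begin{bmatrix}1+t&c(z)\\c(z)&1-t\end{bmatrix}=\frac{1}{2}\bigl(1+c(z,t)\bigr),
\]
by the block form (\ref{eq:clifford}) of $c(z,t)$ on $\Lambda^+\oplus\Lambda^-$.

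At the south pole $p_s=(0,-1)$ we have $\rho(p_s)=p_\infty$, and $e_\beta(p_\infty)=\begin{bmatrix}0&0\\0&1\end{bmatrix}$ by the limit noted above. On the other side, $\frac{1}{2}(1+c(0,-1))=\frac{1}{2}\begin{bmatrix}0&0\\0&2\end{bmatrix}$, which is the same matrix. So $\rho^*e_\beta=e$ on all of $S(V\times\RR)$, including at the pole.

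For equivariance, $\rho$ commutes with the $U$-action (which acts on $z$ and fixes $t$), and $c(gz)=g\,c(z)\,g^{-1}$ on $\Lambda$. Therefore both sides are equivariant projections and the identity holds in $K^0_U$. If the $K$-classes are taken as formal differences with the constant projection $\begin{bmatrix}0&0\\0&1\end{bmatrix}$, note that this constant pulls back to itself, so it matches on both sides.

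No real obstacle is expected. The only points needing care are the identity $1+\|w\|^2=2/(1+t)$ and checking that the block conventions for $c(z,t)$ and $e_\beta$ agree, with $t$ in the $\Lambda^+$ slot.
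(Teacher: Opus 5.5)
Your proposal is correct and matches the paper's proof: the paper also computes $e_\beta\circ\rho$ directly, obtains $\frac{1}{2}\begin{bmatrix}1+t&c(z)\\c(z)&1-t\end{bmatrix}$, and recognizes this as $\frac{1}{2}(1+c(z,t))$ from the block form of Clifford multiplication. Your verification at the south pole and your check of $U$-equivariance fill in details that the paper leaves implicit.
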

\begin{proof}
A straightforward calculation of the composition $e_\beta\circ \rho$ gives    
\[
 e(z,t) = \frac{1}{2}\begin{bmatrix}
    1+t&c(z)\\c(z)&1-t    
    \end{bmatrix}\qquad (z,t)\in S(V\times \RR)
\]
Using (\ref{eq:clifford}) we obtain (\ref{eq:Bott1}).

\end{proof}

\begin{lemma}\label{lem:stereo in K}
Under the  isomorphism
\[ \rho^*:K^0_G(H^*)\to K^0_G(S^*M)\]
the symbol $[\sigma(P_+)]$ of the spectral projection $P_+$ of the Dirac operator is the image of the Thom class of the symplectic  vector bundle $H^*$. This Thom class is
\[ e -
\begin{bmatrix}0&0\\0&1\end{bmatrix}\in K^0_G(C_0(H^*))\]
where $e$ is the projection valued function
\[ e(x,z) = \frac{1}{1+\|z\|^2}
\begin{bmatrix}1&c(z)\\c(z)&\|z\|^2\end{bmatrix}\qquad x\in M,\;z\in H^*_x\]
\end{lemma}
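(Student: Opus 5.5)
This is the fiberwise Lemma \ref{lem:stereo}, carried out over $M$ and made $G$-equivariant. First I will identify the cosphere bundle with a sphere bundle of $H^*\oplus\underline{\RR}$. The Reeb field gives $TM\cong H\oplus\underline{\RR}$, and dually $T^*M\cong H^*\oplus\underline{\RR}$, where the $\underline{\RR}$ summand is spanned by $\theta$. Using a $G$-invariant metric that makes this splitting orthogonal and restricts to $g=\omega(\cdot,J\cdot)$ on $H$, we get a $G$-equivariant identification $S^*M\cong S(H^*\oplus\underline{\RR})$. Each fiber is $S(V\times\RR)$ with $V=H_x^*$, and $V\cong V^{1,0}=H^{1,0}_x$ via $g$. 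The stereographic projection $\rho$ of Section \ref{sec:Bott2}, applied fiberwise, is $G$-equivariant because $G$ preserves $\theta$, $J$ and the metric, and hence fixes the south pole section $(0,-1)$. So $\rho$ extends to a $G$-map $S^*M\to (H^*)^+$, the fiberwise one-point compactification.

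Next I will interpret $\rho^*$ on $K$-theory. Viewing $K^0_G(H^*)$ as the kernel of restriction to the section at infinity, $K^0_G(H^*)\to K^0_G((H^*)^+)$, the map $\rho^*$ sends a class $[e]-[\mathrm{diag}(0,1)]$ to $[e\circ\rho]-[\mathrm{diag}(0,1)]$ in $K^0_G(S^*M)$, where the constant projection $\mathrm{diag}(0,1)$ pulls back to itself. The statement calls $\rho^*$ an isomorphism. I will not need that fact to identify the image of the Thom class. It can be justified by noting that $\rho$ collapses only the south pole section, so $K^0_G(S^*M)\cong K^0_G(H^*)\oplus K^0_G(M)$, and $\rho^*$ is an isomorphism onto the first summand modulo the trivial-rank part. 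I would state this carefully, or read the statement as referring to that summand.

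The main step is the pointwise computation. Fiberwise, Lemma \ref{lem:stereo} gives $e_\beta\circ\rho=\tfrac12(1+c(z,t))$. Under the identification above, $c(z,t)$ with $\|(z,t)\|=1$ is exactly the Dirac symbol $c(\xi)$ at $(x,\xi)\in S^*M$, because the spinor bundle is $\Lambda^\bullet H^{1,0}$ and Clifford multiplication by the $\theta$-direction is the grading operator in (\ref{eq:clifford}). Therefore $e\circ\rho=\sigma(P_+)$. This gives $\rho^*([e]-[\mathrm{diag}(0,1)])=[\sigma(P_+)]-[\mathrm{diag}(0,1)]$. I then need to check whether the constant term should be absorbed, since $[\sigma(P_+)]$ is a class in $K^0_G(S^*M)$ and the constant projection is pulled back from $M$. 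I expect to handle this by working modulo classes pulled back from $M$, or by noting that they match the convention of the boundary map (\ref{eq:K boundary 3}).

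Finally, $e$ with $z\in H^*_x$ is fiberwise the Bott element $e_\beta$, which equals $[A_0^+]$ by the earlier lemma. Applied fiberwise with $U$-equivariance promoted to $G$-equivariance through the $G$-invariant $J$, this is the Thom class of the complex bundle $H^*\cong H^{1,0}$. I expect the main obstacle to be bookkeeping rather than analysis: getting the sign and orientation conventions right, namely the south pole choice and the sign of $t$ relative to $\theta$, so that the result is $P_+$ and not $P_-$. The other delicate point is the precise meaning of the isomorphism claim given the extra $K^0_G(M)$ summand.
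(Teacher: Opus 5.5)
Your proposal is correct and follows the paper's own argument: the paper's entire proof is that Lemma~\ref{lem:stereo} applies fiber-by-fiber with $V=H^{1,0}_x$, using $T^*M\cong H^*\oplus\underline{\RR}$ and the fact that $c(0,1)$ in (\ref{eq:clifford}) is the grading operator, just as you say. Your caveats are also well founded: $\rho$ is actually a $G$-equivariant fiberwise homeomorphism $S^*M\to(H^*)^+$ that carries the south-pole section onto the section at infinity, so nothing is collapsed. Hence $K^0_G(S^*M)\cong K^0_G(H^*)\oplus K^0_G(M)$, $\rho^*$ is an isomorphism only onto the first summand, and $[\sigma(P_+)]$ equals the image of the Thom class only modulo the class of $\mathrm{diag}(0,1)$ pulled back from $M$. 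That discrepancy is harmless, because this class extends over $B^*M$ and is therefore killed by the boundary map $\partial$ used afterwards.
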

\begin{proof}
Lemma  \ref{lem:stereo} applies fiber-by-fiber, with $V=H^{1,0}_x$.
\end{proof}

\begin{proposition}\label{prop:Dirac to Szego in even K theory}
The map in $K$-theory
determined by the deformation $C(S^*M)\rightsquigarrow \SymH^*$
maps the symbol of the spectral projection $P_+$ of the Dirac operator $D$  to the Heisenberg symbol of the Szeg\"o projection $S$,
\[  K^0_G(S^*M)\to K_0^G(\SymH^*)\qquad [\sigma(P_+)]\mapsto [\sigma_H(S)]\]
\end{proposition}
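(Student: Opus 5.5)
The plan is to make the deformation $C(S^*M)\rightsquigarrow \SymH^*$ explicit as a continuous field of $C^*$-algebras over $\lambda\in[0,1]$, and then reduce to Proposition \ref{prop:Thom} using Lemma \ref{lem:stereo in K}.

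\textbf{Step 1: the field.} For each $\lambda$ let $\mathcal{A}_\lambda$ be the $C^*$-algebra of continuous families $x\mapsto(\sigma_+(x),\sigma_-(x))$ with
\[
\sigma_+(x)\in W^*(H^*_x,\lambda\omega),\qquad \sigma_-(x)\in W^*(H^*_x,-\lambda\omega),\qquad \sigma^W(\sigma_+)=\sigma^W(\sigma_-)\ \text{ on } SH^*.
\]
At $\lambda=1$ this is $\SymH^*$. At $\lambda=0$ it consists of pairs of functions on $BH^*$ that agree on the common boundary $SH^*$. Gluing two copies of $BH^*$ along $SH^*$ gives the sphere bundle of $H^*\oplus\underline{\RR}$, i.e.\ $S^*M$, with the two copies as the hemispheres $t\ge0$ and $t\le0$. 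So $\mathcal{A}_0\cong C(S^*M)$.

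Continuity of the field follows fibrewise from continuity of $\{W^*_\lambda\}$ in \S\ref{sec:Cstar for Weyl algebra}, applied over $M$ with the equivariant structure. Since the field is trivial away from $0$, we get a $G$-equivariant map $K^0_G(S^*M)\to K^G_0(\SymH^*)$.

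For every $\lambda$ the ideal $C^*(H,\lambda\omega)\oplus C^*(H,-\lambda\omega)\subset\mathcal{A}_\lambda$ has quotient $C(SH^*)$, by (\ref{eqn:weyl short exact sequence}). The inclusion of the first summand $C^*(H,\lambda\omega)\oplus 0$ is compatible with the deformation. At $\lambda=0$ it is the inclusion of $C_0$ of the open hemisphere, which stereographic projection identifies with $C_0(H^*)$. Naturality of deformation maps with respect to ideals then gives a commutative square. Its top row is the map $K^0_G(H^*)\to K_0^G(C^*(H,\omega))$ of Proposition \ref{prop:Thom}, and its bottom row is the map $K^0_G(S^*M)\to K_0^G(\SymH^*)$.

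\textbf{Step 2: decompose $[\sigma(P_+)]$.} By Lemma \ref{lem:stereo}, $\sigma(P_+)=\tfrac12(1+c(z,t))$ equals the constant projection $\begin{bmatrix}0&0\\0&1\end{bmatrix}$ at the pole. By Lemma \ref{lem:stereo in K},
\[
[\sigma(P_+)]=\iota_*(\text{Thom class of } H^*)+\Bigl[\begin{bmatrix}0&0\\0&1\end{bmatrix}\Bigr].
\]
I must orient the gluing so that the hemisphere carrying the Thom class corresponds to the component with $+\lambda\omega$, i.e.\ the $\sigma_+$ side. This matters because the vacuum for $-\omega$ is the $n$-form and carries the determinant representation.

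\textbf{Step 3: apply the Thom proposition.} By Proposition \ref{prop:Thom} and the square of Step 1, the Thom term maps to the class of the vacuum idempotent $(s,0)$, which is $[\sigma_H(S)]$ by (\ref{eqn:szego symbol1}) and Example \ref{composition example}. Concretely, the quantized idempotents built from $A_\lambda^+$ and the parametrices $Y_\lambda$ form a continuous section (Appendix \ref{sec:explicit family of idempotents}), equal to $\begin{bmatrix}s_\lambda&0\\0&1\end{bmatrix}$ at $\lambda=1$. The constant projection is a constant section of the field, so it maps to itself. It cancels against the formal difference used in Lemma \ref{lem:stereo in K}, so the image of $[\sigma(P_+)]$ is $[\sigma_H(S)]$, with the same normalization of the constant term as in the statement.

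\textbf{Main obstacle.} The delicate point is the bookkeeping in Steps 2--3. I must check that the sign and orientation conventions put the Thom class on the $\sigma_+$ side with $\lambda>0$. I must also check that the explicit idempotent section is genuinely continuous across $\lambda=0$ inside $\mathcal{A}_\lambda$, including the matching condition on $SH^*$. I would verify the latter directly from the appendix formulas.
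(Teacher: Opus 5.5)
Your proposal is correct and is essentially the paper's own argument. The paper uses the same commutative square: the Thom deformation $C_0(H^*)\rightsquigarrow C^*(H,\omega)$ of Proposition~\ref{prop:Thom} on the left, the deformation $C(S^*M)\rightsquigarrow\SymH^*$ on the right, and the inclusion $e\mapsto(e,0)$ of the $\sigma_+$-side ideal along the bottom, and it concludes via Lemma~\ref{lem:stereo in K}. Your extra bookkeeping only makes explicit what the paper leaves tacit: the explicit field $\mathcal{A}_\lambda$, the orientation check placing the Thom class on the $+\omega$ side, and the constant term $\begin{bmatrix}0&0\\0&1\end{bmatrix}$ (so the statement is really about $[\sigma(P_+)]$ minus this constant class).
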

\begin{proof}
We have the   commutative diagram,
\[
\xymatrix{
K^0_G(H^*)\ar[r]^-{\rho^*}\ar[d]^{\ast_\lambda} 
&K^0_G(S^*M)\ar[d]\\
K_0^G(C^*(H,\omega))\ar[r]
&K_0^G(\SymH^0)
}
\]
where the horizontal map at the bottom maps a smooth family  $e\in \mcS(H,\omega)$ of idempotents $e(x)\in \mcS(H_x,\omega)$ to the Heisenberg symbol $(e(x),0)$.
By Lemma \ref{lem:stereo in K}, the top row  maps the Thom class of $H^{1,0}$ to $\sigma(P_+)$.
By Proposition \ref{prop:Thom}, the vertical arrow on the left maps the Thom class to the family of vacuum projections $s(x)\in \mcS(H_x^*,\#)\cong \mcS(H_x,\omega)$.
The bottom row maps this family $s$ to the principal Heisenberg symbol $\sigma_H(S)=(s,0)$ of the Szeg\"o projection.

\end{proof}

\section{From Symbols to Operators}\label{sec:symbols to operators}

\subsection{Full symbols on the Heisenberg group}\label{sec:Heisenberg group quantization}

We construct a unitalization of the group $C^*$-algebra $C^*(\Heis)$ of the Heisenberg group $\Heis=V\times \RR$, with notations as in section \ref{sec:Heisenberg and Weyl}.

For $\lambda>0$, denote by $\delta_\lambda$ the automorphism of $\Heis$ defined by
\[ \delta_\lambda(v,t)=(\lambda v,\lambda^2t)\qquad v\in V, t\in \RR\]
For a smooth function $f\in C^\infty(\Heis)$ 
define $(\delta^*_\lambda f)=f\circ \delta_\lambda$.
Extend $\delta_\lambda$ to distributions.
A distribution is homogeneous of degree $m>0$ if $\delta_\lambda^*u=\lambda^mu$ for all $\lambda>0$.

Let $\Psi^0(\Heis)$ be the convolution algebra
whose elements are compactly supported distributions $u\in \mcE'(\Heis)$ with the properties:
\begin{itemize}
    \item The restriction of $u$ to $\Heis\setminus \{0\}$ is a smooth function
    \item For any $\lambda>0$ the difference $\delta^*_\lambda u-u$ is smooth at $0\in G$
\end{itemize}
Convolution of distributions on $\Heis$  gives a representation
\[ \Psi^0(\Heis)\to \mcB(L^2(\Heis))\qquad u\mapsto [\psi\mapsto u\ast \psi]\]
We denote by $\Psi^*(\Heis)$ the $C^*$-algebra that is the completion of $\Psi^0(\Heis)$ in the operator norm.

There is a short exact sequence
\begin{equation}\label{eq:ses Heisenberg group}
    0\to C^*(\Heis)\to \Psi^*(\Heis)\to \SymH(\Heis)\to 0
\end{equation} 
The quotient  $\SymH(\Heis)$ is the $C^*$-algebra
\[ \SymH(\Heis) = \{(a,b) \in W^*_{+1}\oplus W^*_{-1}\mid \pi^W(a)=\pi^W(b)\}\]

\subsection{The Heisenberg tangent space}

With the identification $TM=H\oplus \underline{\RR}$
and symplectic form $\omega=d\theta|H$ in the fibers of $H$,
the fibers of the tangent space $TM$ have the structure of a Heisenberg group $\Heis_x=H_x\times \RR$.

The group structures in the fibers of $TM$ make the total space into a Lie groupoid, which we denote by $T_HM$.
As a smooth manifold $T_HM=TM$. Algebraically, the groupoid $T_HM$ is a disjoint union of a collection of  Heisenberg groups, 
\[ T_HM = \bigsqcup_{x\in M} \Heis_x\]
$C^*(T_HM)$ is the groupoid convolution $C^*$-algebra of $T_HM$.

$C^*(T_HM)$ is the $C^*$-algebra of continuous sections in a continuous field $\{C^*(\Heis_x)\}_{x\in M}$.
The fiber  $C^*(\Heis_x)$ is the group $C^*$-algebra of the Heisenberg group $\Heis_x$.
The continuous field structure is defined by declaring elements in $C_c^\infty(T_HM)$ to be continuous sections.

Let $\Psi^*(T_HM)$ be the $C^*$-algebra of continuous sections in the field $\{\Psi^*(\Heis_x)\}_{x\in M}$.
From (\ref{eq:ses Heisenberg group}) we get the short exact sequence
\begin{equation}\label{eq:seq for H}
    0\to C^*(T_HM)\to \Psi^*(T_HM)\to \SymH^*\to 0
\end{equation}

\subsection{The symbol of the Szeg\"o projection}

Let $\mcL$ be an order zero Heisenberg pseudodifferential operator on the compact contact manifold $M$.
If $\mcL$ has invertible principal Heisenberg symbol $\sigma_H(\mcL)$ then $\mcL$ is a hypoelliptic Fredholm operator.
The Heisenberg principal symbol of $\mcL$ is an invertible element in the algebra $\SymH^0$, and therefore determines a $K$-theory element,
\[[\sigma_H(\mcL)]\in K_1(\SymH^*)\]
Via the boundary map for the short exact sequence (\ref{eq:seq for H}),
we obtain an element 
\[\partial([\sigma_H(\mcL)])\in K_0(C^*(T_HM))\]
A slightly different construction of this element in $K_0(C^*(T_HM))$ was first proposed in \cite{vE10a}.

Now let $S$ be a $G$-equivariant Szeg\"o projection on $M$, and  $F=2S-1$ the self-adjoint operator
that is $+1$ on the range of $S$ and $-1$ on the complement of the range.
Using (\ref{eq:K boundary 3}) we have
\[ [\sigma_H(S)]\in K_0^G(\SymH^*)\qquad \partial([S])=[\sigma_H(F)]\in K_1^G(C^*(T_HM))\]

\subsection{Quantization on the cotangent space}

\begin{proposition}\label{prop:Dirac to Szego in odd K theory}
The quantization map
\[ K^1_G(T^*M)\to K_1^G(C^*(T_HM))\]
takes the class in $K^1_G(T^*M)$ represented by the symbol of the Dirac operator $D$ of $M$ 
to the class in $K_1^G(C^*(T_HM))$ represented by the Heisenberg symbol of a Szeg\"o projection $S$ on $M$.
\end{proposition}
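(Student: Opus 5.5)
The plan is to deduce the statement from Proposition \ref{prop:Dirac to Szego in even K theory} by naturality of the boundary map with respect to the deformation. First I would set up the deformation at the level of the relevant short exact sequences. On the classical side there is the sequence for the pair $(B^*M,S^*M)$,
\[ 0\to C_0(T^*M)\to C(B^*M)\to C(S^*M)\to 0, \]
whose boundary map sends $[\sigma(P_+)]$ to $[\sigma(F)]\in K^1_G(T^*M)$, as recorded in section \ref{sec:symbol of Dirac}. On the Heisenberg side there is the sequence (\ref{eq:seq for H}), whose boundary map sends $[\sigma_H(S)]$ to $[\sigma_H(2S-1)]\in K_1^G(C^*(T_HM))$. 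The quantization map in the statement is induced by a continuous field of $C^*$-algebras over $[0,1]$ that interpolates between $C_0(T^*M)$ at $\lambda=0$ and $C^*(T_HM)$ at $\lambda>0$. Fiberwise this is the field $\{C^*(\Heis_x)\}$ obtained by rescaling the Heisenberg group law through $\delta_\lambda$, so that the bracket becomes $\lambda$ times the original bracket and degenerates to the abelian group $TM$.

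The key step is to build an exact sequence of continuous fields over $[0,1]$,
\[ 0\to \{C^*_\lambda\}\to \{\Psi^*_\lambda\}\to \{\Sigma_\lambda\}\to 0. \]
At $\lambda=0$ it should restrict to the classical sequence, and at $\lambda=1$ to (\ref{eq:seq for H}). Its quotient field should be the deformation $C(S^*M)\rightsquigarrow \SymH^*$ already used in Proposition \ref{prop:Dirac to Szego in even K theory}. I would construct the middle field as sections of $\{\Psi^*(\Heis_x^\lambda)\}$, the unitalization of section \ref{sec:Heisenberg group quantization} applied to the rescaled groups. Since all fields are trivial away from $0$, evaluation at $0$ induces isomorphisms in $K$-theory for the kernel and quotient fields over $[0,1]$. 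The deformation maps are then the compositions $(\mathrm{ev}_1)_*\circ(\mathrm{ev}_0)_*^{-1}$.

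Naturality of the six-term boundary map under the morphisms $\mathrm{ev}_0$ and $\mathrm{ev}_1$ of extensions then gives a commutative square. In this square the boundary maps intertwine the even deformation map $K^0_G(S^*M)\to K_0^G(\SymH^*)$ with the odd quantization map $K^1_G(T^*M)\to K_1^G(C^*(T_HM))$. Chasing $[\sigma(P_+)]$ around the square completes the argument: by Proposition \ref{prop:Dirac to Szego in even K theory} it goes to $[\sigma_H(S)]$, which $\partial$ sends to $[\sigma_H(2S-1)]$. Going the other way, $\partial[\sigma(P_+)]=[\sigma(F)]$, which maps to its quantization. Equivariance is automatic because $G$ preserves $\theta$, $H$, $J$ and the dilations, so everything holds in $G$-equivariant $KK$ and $K$-theory.

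The main obstacle is verifying that the middle field $\{\Psi^*_\lambda\}$ is a genuine continuous field, with the quotient matching the field used for the symbols. Concretely, one must reconcile two descriptions at $\lambda\to 0$. The classical order zero symbols are functions on $S^*M$, and the relevant sphere is the one for the parabolic dilations. The quotient of $\Psi^*(\Heis)$, on the other hand, is a fibered product $W^*_{+1}\oplus W^*_{-1}$. I would first attempt this by identifying $\Sigma_\lambda$ with pairs in $W^*_{\pm\lambda}$ over $H^*$, glued along $\sigma^W$. Homogeneity in the $t$-direction would then give, at $\lambda=0$, functions on the parabolic cosphere bundle, which is $G$-equivariantly homeomorphic to $S^*M$. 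If this proves technical, a fallback is to avoid the middle field entirely: apply the mapping-cone/Bott description, using that the boundary map is given by the explicit formula (\ref{eq:K boundary 3}) on families. The family of idempotents from Proposition \ref{prop:Dirac to Szego in even K theory} then yields, through that same formula, a continuous family of unitaries in the unitalized kernel field.
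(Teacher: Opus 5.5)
Your proposal is correct and follows the paper's proof. Like the paper, you deform the classical extension $0\to C_0(T^*M)\to C(B^*M)\to C(S^*M)\to 0$ into (\ref{eq:seq for H}) by scaling the bracket, with the parabolic cosphere appearing at $\lambda=0$. You then use naturality of the boundary map to get the commuting square and chase $[\sigma(P_+)]$ around it using Proposition \ref{prop:Dirac to Szego in even K theory}. The only difference is packaging: the paper obtains the commuting square from asymptotic morphisms and exactness of $\mfA$ (Lemma \ref{lem:naturality of del for asymptotic morphism}), while you use $\mathrm{ev}_0,\mathrm{ev}_1$ on the section algebras of an exact sequence of continuous fields, which is an equivalent formulation.
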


\begin{proof}
Deformation of the group law for $\Heis=V\times\RR$ as
\[ (v,t)\cdot (w,s) = (v+w, t+s-\tfrac{1}{2}\lambda\omega(v,w))\]
(replacing $\omega$ by $\lambda\omega$) with $\lambda\in [0,1]$ gives a deformation of short exact sequences,
 \begin{equation*}
\xymatrix{
0\ar[r]
&C_0(\Lh^*)\ar[r]\ar@{.>}[d]^{\delta_\lambda} 
&C(B(\Lh^*))\ar[r]\ar@{.>}[d]^{\delta_\lambda}
&C(S(\Lh^*))\ar@{.>}[d]^{\delta_\lambda}\ar[r]
&0\\
0\ar[r]
&C^*(\Heis)\ar[r]
&\Psi^*(\Heis)\ar[r]
&\SymH(\Heis)\ar[r]
&0
}
\end{equation*}
Here $\Lh=V\times\RR$ is the Lie algebra of $\Heis$, $S(\Lh^*)$ is the sphere of parabolic rays in $\Lh^*$,
and $B(\Lh^*)=\Lh^*\sqcup S(\Lh^*)$ is the corresponding compactification of $\Lh^*$.

On a contact manifold $M$ we obtain
 \begin{equation*}
\xymatrix{
0\ar[r]
&C_0(T^*M)\ar[r]\ar@{.>}[d]^{\delta_\lambda} 
&C(B^*M)\ar[r]\ar@{.>}[d]^{\delta_\lambda}
&C(S^*M)\ar@{.>}[d]^{\delta_\lambda}\ar[r]
&0\\
0\ar[r]
&C^*(T_HM)\ar[r]
&\Psi^*(T_HM)\ar[r]
&\SymH^*\ar[r]
&0
}
\end{equation*}
from which we get the commutative diagram in $K$-theory,
\begin{equation*}
\xymatrix{
K^0_G(S^*M)\ar[r]^-{\partial}\ar[d]^{\delta_\lambda}
&K^1_G(T^*M)\ar[d]^{\delta_\lambda}\\
K_0^G(\SymH^*)\ar[r]^-{\partial}
&K_1^G(C^*(T_HM))
}
\end{equation*}
Proposition \ref{prop:Dirac to Szego in odd K theory} now follows from Proposition \ref{prop:Dirac to Szego in even K theory}.

\end{proof}

\subsection{$K$-homology}
The (odd) analytic $K$-homology of $M$,
\[ K_1(M)=KK^1(C(M),\CC)\]
is the abelian group of homotopy equivalence classes of triples $(H,\psi,F)$,
where
\begin{itemize}
\item $H$ is a Hilbert space
\item $\psi:C(M)\to \mcB(H)$ is a $\ast$-representation of $C(M)$ on $H$
\item $F\in \mcB(H)$ is a  self-adjoint bounded linear operator on $H$
\end{itemize}
satisfying the compactness relations
\begin{itemize}
    \item $F^2-I\in \KK(H)$
    \item $[F,\psi(f)]\in \KK(H)$ for all $f\in C(M)$
\end{itemize}
The even $K$-homology group $K_0(M)=KK_0(C(M),\CC)$ is defined similarly,
but now  $H=H^0\oplus H^1$ is $\ZZ_2$ graded,
$\psi(f)$ is even, and $F$ is odd. 
\begin{example}
An order zero pseudodifferential  operator $F$ on a compact manifold $M$
is bounded  on $H=L^2(M)$.
If $F$ is self-adjoint and elliptic, then $F^2-I$ is compact. 
Let $\psi:C(M)\to \mcB(L^2(M))$ be the representation of functions $f\in C(M)$ by multiplication operators $\psi(f)=M_f$.
For any $f\in C^\infty(M)$, the commutator $[F,\psi(f)]$ is a pseudodifferential operator of order $-1$, and hence compact as an operator on $L^2(M)$.
By density of $C^\infty(M)$ in $C(M)$, the same holds for $f\in C(M)$.
Thus, $(H,\psi,F)$ defines an element in $K_1(M)$.
\end{example}

\begin{example}\label{ex:dirac}
Let $D$ be the Dirac operator of an odd dimensional compact Spin$^c$ manifold $M$.
$D$ is self-adjoint.
Then $F=D(1+D^2)^{-1/2}$
is a self-adjoint elliptic order zero pseudodifferential  operator on  $M$.
It is a bounded operator on $H=L^2(M,\slashed{S})$,
where $\slashed{S}$ is the spinor bundle of $M$.
With $\psi(f)=M_f$ as above, $D$ determines an element in $K_1(M)$,
\[ [D] = [(L^2(M,\slashed{S}), \psi, D(1+D^2)^{-1/2})]\in K_1(M)\]
Let $P_+$ be the projection onto the direct sum of the eigenspaces of $D$ for positive eigenvalues,
and let $F'=2P_+-I$.
The difference $F-F'$ is a compact operator.
Therefore we may replace $F$ by $F'$.

\end{example}
\begin{example}\label{ex:szego}
The Szeg\"o projection $S$ is an order zero pseudodifferential operator in the Heisenberg calculus.
It is a bounded linear operator on $L^2(M)$.
Let $F=2S-I$. Since $S$ is a projection, $F$ is self-adjoint and $F^2=I$.
Commutators $[F,\psi(f)]$ are of order $-1$ in the Heisenberg calculus, and hence compact as operators on $L^2(M)$.
Thus, $S$ determines an element in $K_1(M)$,
\[ [S]=[(L^2(M), \psi, 2S-I)]\in K_1(M) \]
\end{example}

\subsection{Proof of the main theorem}

Let $\mathrm{Op}_H$ be the ``choose an operator'' map,
\[\mathrm{Op}_H:K_0(C^*(T_HM))\to K_0(M)\]
with the property that if $T$ is a Heisenberg pseudodifferential operator of order zero with invertible principal  symbol, then
\[ \mathrm{Op}_H(\partial([\sigma_H(T)]) = [T]\]
This is analogous to the ``choose an operator map'' for elliptic operators
\[ \mathrm{Op}_e:K^0(T^*M)\to K_0(M) \]
The existence of the map $\mathrm{Op}_H$ is shown in \cite{BvE14}.
Moreover, by Theorem 5.4.1 in \cite{BvE14}, the following diagram commutes,
\[
\xymatrix{
K^0(T^*M)\ar[r]^-{\delta_\lambda} \ar[dr]_{\mathrm{Op}_e}
&K_0(C^*(T_HM))\ar[d]^{\mathrm{Op}_H}\\
&K_0(M)
}
\]
The proof of Theorem 5.4.1 in \cite{BvE14}
applies, mutatis mutandis, to show the existence of a ``choose a self-adjoint operator'' map
\[ \mathrm{Op}_H:K_1(C^*(T_HM))\to K_1(M)\qquad \mathrm{Op}_H([\sigma_H(F)])=F\]
and commutativity in the diagram
\begin{equation}\label{eq:choose an operator}
    \xymatrix{
K^1(T^*M)\ar[r]^-{\delta_\lambda} \ar[dr]_{\mathrm{Op}_e}
&K_1(C^*(T_HM))\ar[d]^{\mathrm{Op}_H}\\
&K_1(M)
}
\end{equation}
If a compact Lie group $G$ acts on $M$
we obtain the analogous diagram in equivariant $K$-theory.


Theorem \ref{thm:main} follows  from Proposition \ref{prop:Dirac to Szego in odd K theory} and the commutative diagram (\ref{eq:choose an operator}).

\subsection{Index formula for Toeplitz operators}\label{sec:equivariant BdM formula}

With $M$ and $G$ as above, let  $\pi:G\to U(r)$ be a unitary representation of $G$,
and let $f:M\to \mathrm{GL}(r,\CC)$ be a $G$-equivariant smooth function, which means that \[f(g.p)=\pi(g)f(p)\pi(g)^{-1}\qquad g\in G,\;p\in M\]
Let $M_f$ be the multiplication operator
\[ (M_fu)(x) = f(x)u(x)\qquad u\in C^\infty(M,\CC^r)\]
We can form the Toeplitz operator,
\[T_f=(S\otimes I)\circ M_f:H^2(M)\otimes \CC^r \to H^2(M)\otimes \CC^r\]
where $S$ is a Szeg\"o projection. $T_f$ is a $G$-equivariant  Fredholm operator.

Before we can state the equivariant index formula for $T_f$, we need a  standard lemma. 

\begin{lemma}\label{lem:compact group acts on symplectic space}
Let $K$ be a compact group acting linearly on a symplectic vector space $V$, preserving the symplectic form $\omega$.
Then $\omega$ is nondegenerate when restricted to  the fixed set $V^K=\{v\in V\mid g.v=v\text{ for all }g\in K\}$.
\end{lemma}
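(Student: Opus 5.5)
The plan is to average an inner product over $K$ to obtain a $K$-invariant compatible complex structure on $V$, and then to show that the fixed subspace $V^K$ is $J$-invariant. On a $J$-invariant subspace, nondegeneracy is immediate from positivity.

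First, choose a $K$-invariant positive definite inner product $q$ on $V$. This is possible by averaging any inner product over $K$ with respect to normalized Haar measure, which is where compactness is used. Now run exactly the construction in the proof of the lemma on the existence of a $G$-invariant compatible complex structure. Write $\omega(v,w)=q(v,Av)$, more precisely $\omega(v,w)=q(v,Aw)$, with $A$ skew-adjoint for $q$ and invertible, and set $J=-A|A|^{-1}$. Since both $\omega$ and $q$ are $K$-invariant, $A$ commutes with the action of every $g\in K$. The operator $|A|=\sqrt{-A^2}$ is obtained from $A$ by continuous functional calculus, equivalently as a polynomial in $A^2$ on its finite spectrum, so it also commutes with each $g$, and therefore so does $J$. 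As in that proof, $J$ is compatible with $\omega$, so $\omega(v,Jv)>0$ for $v\ne 0$.

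Next, check that $V^K$ is $J$-invariant. If $v\in V^K$, then for every $g\in K$ we have $g.(Jv)=J(g.v)=Jv$, so $Jv\in V^K$.

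Finally, let $v\in V^K$ with $\omega(v,w)=0$ for all $w\in V^K$. Taking $w=Jv\in V^K$ gives $\omega(v,Jv)=0$, and compatibility then forces $v=0$. Hence $\omega|_{V^K}$ is nondegenerate.

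The only point that needs any care is showing that $J$ commutes with the $K$-action. This reduces to the fact that $|A|^{-1}$ is a function of $A^2$, which commutes with anything that commutes with $A$. Everything else is routine.
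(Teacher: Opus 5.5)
Your proof is correct, but it takes a different route from the paper's.

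\textbf{The paper's route.} It never uses a complex structure. Given $u\in V^K$ that is $\omega$-orthogonal to $V^K$, invariance gives $\omega(u,v)=\omega(u,g.v)$ for all $g\in K$ and all $v\in V$. Averaging over $K$ yields $\omega(u,v)=\omega(u,\bar v)$ with $\bar v=\int_K g.v\,dg\in V^K$. Hence $\omega(u,v)=0$ for every $v\in V$, and so $u=0$. This uses only the Haar averaging projection onto $V^K$, so it is shorter and self-contained.

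\textbf{Your route.} It needs more machinery: an invariant inner product, the polar decomposition $J=-A|A|^{-1}$, and a check that $J$ commutes with $K$. You handle that check correctly, since $|A|^{-1}$ is a polynomial in $A^2$ on its finite spectrum. In return you prove more: $V^K$ is $J$-stable, and $J|_{V^K}$ is a compatible complex structure there. That extra structure is exactly what the paper uses right after this lemma, when it treats $H^g$ and the normal bundle $N^g$ as complex vector bundles and takes the Todd class of $M^g$ as a contact manifold.

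In the paper's setting, $J$ has already been chosen $G$-invariant, so your last two steps alone give the lemma for the action of $g$ on $H_p$.
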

\begin{proof}
Assume $u\in V^K$ is such that $\omega(u,w)=0$ for all $w\in V^K$.
Then for any $v\in V$ we have $\omega(u,v)=\omega(g.u, g.v)=\omega(u,g.v)$ for all $g\in K$, and therefore,
$\omega(u,v) = \omega(u,\bar{v})$ where $\bar{v}$ is the average
\[ \bar{v}=\int_K g.v\,dg\]
But $\bar{v}\in V^K$ and so by assumption $\omega(u,\bar{v})=0$.
It follows that $\omega(u,v)=0$ for all $v\in V$,
and therefore $u=0$ by nondegeneracy of $\omega$. 

\end{proof}

\begin{lemma}\label{lem:restriction of theta to M^g}
Let $G$ be a compact Lie group  acting smoothly on a  contact manifold $M$, preserving the contact form $\theta$.
Then $\theta$ restricts to a contact form on the fixed point set $M^g=\{p\in M\mid g.p=p\}$, for every $g\in G$.
\end{lemma}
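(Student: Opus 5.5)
The plan is to work pointwise along $M^g$, combining the linear-algebra Lemma \ref{lem:compact group acts on symplectic space} with the invariance of $\theta$ and of the Reeb field $R$.

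First, fix $g\in G$ and let $K$ be the closure of the cyclic subgroup generated by $g$. This is a compact abelian subgroup of $G$, and $M^g=M^K$. Since $G$ is compact, $M^K$ is a closed embedded submanifold; possibly its components have different dimensions, and we argue component by component. This follows from the standard slice/exponential-map argument for an invariant Riemannian metric. At $p\in M^K$ we have $T_pM^K=(T_pM)^K$.

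Next, since $K$ preserves $\theta$, it preserves $d\theta$, hence $H$, and also the Reeb field $R$, which is uniquely determined by $\theta$. So $R_p\in (T_pM)^K$ for $p\in M^K$, and $R$ is tangent to $M^K$. The $K$-invariant splitting $T_pM=H_p\oplus\RR R_p$ then gives
\[ T_pM^K=(T_pM)^K = H_p^K\oplus \RR R_p . \]
In particular $\theta|_{M^K}$ is nowhere zero, and its kernel is $H_p^K$.

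Now apply Lemma \ref{lem:compact group acts on symplectic space} to the linear action of $K$ on the symplectic vector space $(H_p,\omega_p)$, with $\omega=d\theta|_H$. It shows that $d\theta$ restricts to a nondegenerate form on $H_p^K=\ker(\theta|_{M^K})_p$. This uses $d(\theta|_{M^K})=(d\theta)|_{M^K}$, by naturality of $d$ under pullback by the inclusion. Hence $H_p^K$ has even dimension $2m$, the component of $M^g$ through $p$ has dimension $2m+1$, and $\theta(d\theta)^m$ is nonvanishing there. So $\theta|_{M^g}$ is a contact form. As a byproduct, the normal bundle $N^g\cong H/H^K$ is the $\omega$-orthogonal complement of $H^K$, which is $J$-invariant when $J$ is chosen $G$-invariant; this justifies treating $N^g$ as a complex bundle.

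The only step needing care is the manifold structure of $M^g$ together with the identification $T_pM^g=(T_pM)^g$. I would cite the standard fact, or sketch it by linearizing the action via the exponential map of a $K$-invariant metric. The rest is formal.
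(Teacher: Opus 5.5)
Your proof is correct and follows the same route as the paper: invariance of the Reeb vector gives $T_pM^g=H_p^g\oplus\RR R_p$, and Lemma~\ref{lem:compact group acts on symplectic space} then gives nondegeneracy of $d\theta_p$ on $H_p^g$. Two points the paper leaves implicit are handled explicitly in your version: the passage to the compact group $K=\overline{\langle g\rangle}$, with $M^g=M^K$ and $H_p^g=H_p^K$, which is needed because that lemma is stated for compact groups rather than a single element, and the submanifold structure of $M^g$ with $T_pM^g=(T_pM)^g$.
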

\begin{proof}
Let $p\in M^g$, i.e. $g.p=p$.
The action of $g$ on $T_pM$ preserves  $\theta_p$ and  $d\theta_p$.
Then $g$ also preserves the Reeb vector $R_p\in T_pM$.
Moreover, $g$ preserves the subspace $H_p=\ker \theta_p$, and the symplectic form $d\theta_p$ on $H_p$.
Let $H_p^g$ be the fixed set for the action of $g$ on $H_p$.
The tangent space $T_pM^g$ is the direct sum of $H_p^g$ and the span of $R_p$.
To prove that $\theta$ restricts to a contact form on $M^g$ we need to prove that $d\theta_p$ is nondegenerate on $H_p^g$.
This follows from Lemma \ref{lem:compact group acts on symplectic space}.

\end{proof}

Note that the normal bundle $N^g$ of $M^g$ in $M$ is the quotient of the complex vector bundle $H^{1,0}$ (restricted to $M^g$) by the corresponding complex vector bundle for the contact manifold $M^g$. 
This is so because the Reeb field on $M$ is tangent to $M^g$, and the action of $G$ preserves the complex structure $J$ of $H$. 
Thus, $N^g$  is naturally a complex vector bundle on $M^g$.

We recall the definition of the even and odd equivariant Chern characters. 
For a complex $G$-equivariant vector bundle $E$ on $M$ the equivariant Chern character is as follows. 
Choose a $G$-equivariant connection on $E$, and let $\Omega$ be its curvature. 
Then for each $g\in G$, 
$\ch_g(E)$ is a cohomology class on $M^g$ given by the differential form
\[ \ch_g(E) = \tr \left(g \exp{\left(-\frac{\Omega}{2 \pi i}\right)}\right)\]
With  $f:M\to \mathrm{GL}(r,\CC)$ as above, $\ch_g(f)$ is the cohomology class represented by the differential form
\[\ch_g(f) = -\sum \limits_{k= 0}^n   \frac{1}{(2 \pi i)^{k+1}} \frac{k!}{(2k+1)!} \tr g  (f^{-1}  df)^{2k+1}.
\]
We can now state and prove the equivariant version of Boutet de Monvel's index theorem. Recall from the introduction:
\BDM*
Here:
\begin{itemize}
\item $\chi_g(\pi)=\mathrm{Tr}(\pi(g))$ is the character of a representation $\pi\in R(G)$ evaluated at $g\in G$.
\item $M^g$ is the fixed set $\{p\in M\mid g.p=p\}$. $M^g$ is a contact manifold by restriction of $\theta$.
\item $\Td(M^g)$ is the Todd class of the contact manifold $M^g$, viewed as a stably almost complex manifold. 
\item $N^g$ is the normal bundle of $M^g$ in $M$, viewed as a  complex vector bundle, and $(N^g)^*$ is its dual.
\item $\ch_g$ is the equivariant Chern character.
\item $\ch_g(\lambda_{-1}(N^g)^*)$ is the  alternating sum $\sum (-1)^k \ch_g(\Lambda^k (N^g)^*) $.
\end{itemize}

\begin{proof}
The $G$-equivariant Fredholm operator $T_f$ determines an element in equivariant $K$-homology
\[[T_f]\in K_0^G(M)\]
The  function $f:M\to \mathrm{GL}(k,\CC)$ determines an element in equivariant $K$-theory,
\[[f]\in K^1_G(M)\]
Then $[T_f]=[f]\cap [S]$ is the cap product of $[f]\in K^1_G(M)$ and $[S]\in K_1^G(M)$ (see \cite{BDT89}).
Theorem \ref{thm:main} implies that $[T_f]$ is equal in $K$-homology to $[f]\cap [D]$. The latter can be represented by the ``Toeplitz operator''  $D_f'$ obtained by replacing the Szeg\"o projection $S$ by the positive spectral projection $P_+$ of $D$,
\[D_f' = (P_+\otimes I)M_f(P_+\otimes I)\]
The operator $D_f'$ acts on the range of $P_+\otimes I$.
We may replace $D_f'$ by the operator $D_f$ that acts on $L^2(M,\CC^r)$,
\[ D_f = (P_+\otimes I)M_f(P_+\otimes I) + (I-P_+)\otimes I\]
$D_f$ has the same index as $D'_f$. We get 
\[ \Ind_G T_f=\Ind_G D_f\]
But $D_f$ is an elliptic pseudodifferential operator of order zero,
and the  equivariant index formula \cite{AS3} of Atiyah-Singer  applies.
The derivation of formula (\ref{eq:BdM equivariant intro}) is essentially the same as the derivation of the equivariant Riemann-Roch formula (4.4) in \cite{AS3}.
\end{proof}

\appendix

\section{Boundary Maps in $K$-theory}

If $A$ is a unital $C^*$-algebra with closed two-sided ideal $J\subset A$,
then the boundary map  in  $K$-theory
\[ \partial : K_1(A/J)\to K_0(J)\]
is as follows.
An element $[u]\in K_1(A/J)$ 
is represented by a unitary matrix $u\in \mathrm{GL}(k, A/J)$.
Choose a lift $x\in M_k(A)$ of $u$.
If $[u]\ne 0$ then $x$ will not be invertible,
but $x$ has a ``parametrix''. Choose any $y\in M_k(A)$ such that  
\[r=1-xy\in M_k(J)\qquad s=1-yx\in M_k(J)\]
Then
\begin{equation}\label{eq:K boundary}
\partial([x]) = e- \begin{bmatrix}0&0\\0&1\end{bmatrix}\in K_0(J)\qquad 
e=\begin{bmatrix}
    s^2 & yr(1+r)\\
    xs & 1-r^2
    \end{bmatrix}
\end{equation} 
To verify that $e^2=e$  use $sy=yr$ and $rx=xs$.

The other boundary map is
\begin{equation}\label{eq:K boundary 2}
    \partial:K^0(A/J)\to K^1(J)\qquad \partial([e])=[\exp(2\pi i g)] 
\end{equation} 
Here $e\in M_k(A/J)$ is a projection that represents a class $[e]\in K_0(A/J)$,
and $g\in M_k(A)$ is any self-adjoint lift of $e$.
Note that $\exp(2\pi i g)$ is a unitary in $M_k(A)$
whose image in $M_k(A/J)$ is $\exp(2\pi i e)=1_k$.

We have the isomorphism
\[ KK_1(\CC,J)\cong K_1(J)\]
An element in $KK_1(\CC,J)$ is represented by an adjointable operator
$F\in \mcL(\mcE)$
that acts on a right Hilbert $J$ module $\mcE$, such that
\[ F^*=F\qquad F^2-1\in \KK(\mcE)\]
To obtain an element in $K_1(J)$ take the unitary
\[ [u]\in K_1(\mcK(\mcE))\qquad u=-\exp(\pi i F)\in \KK(\mcE)^+\]
and observe that $\KK(\mcE)$ is Morita equivalent to $J$.

With this in mind, an alternative description of the boundary map (\ref{eq:K boundary 2}) is
\begin{equation}\label{eq:K boundary 3}
    \partial : K_0(A/J)\to KK_1(\CC,J)\qquad \partial([e])=[F]
\end{equation} 
where $F\in M_k(A)$ is a self-adjoint lift of $2e-1\in M_k(A/J)$.
Then
\[ F^*=F\qquad F^2-1\in M_k(J)\]
We have 
$F\in \mcL(J^k)$
by letting $M_k(A)$ act from the left on the right Hilbert $J$ module $J^k$.
Note that $\KK(J^k)=M_k(J)$.
If we let $F=2g-1$, with $g$ as in  (\ref{eq:K boundary 2}), then
\[-\exp(\pi i F) = \exp(2 \pi i g)\]

\section{Deformations in $K$-theory}

Let $\{A_\lambda\}_{\lambda\in [0,1]}$ be a continuous field of $C^*$-algebras that is trivial away from $0$, i.e. the field is trivial when restricted to $(0,1]$.
Such a field determines a map in $K$-theory
\[ K_j(A_0)\to K_j(A_1)\]
For $K_0$ this map is as follows.
Let $A^+_\lambda$ be the $C^*$-algebra $A_\lambda$ with unit adjoined.
A projection $e_0\in M_n(A_0^+)$  extends to a continuous section $\lambda\mapsto e_\lambda$, where each $e_\lambda\in M_n(A_\lambda^+)$ is a projection.
Then $e_0\mapsto e_1$ gives a map $K_0(A_0^+)\to K_0(A_1^+)$.
This map restricts to $K_0(A_0)\to K_0(A_1)$,
where $K_0(A_\lambda)$ is the kernel of $K_0(A_\lambda^+)\to K_0(\CC)$.
For $K_1$, identify $K_1(A)=K_0(C_0(\RR)\otimes A)$.
(See Theorem 3.1 in \cite{ENN93} for details.)

For a $C^*$-algebra $B$, let $\mathfrak{A} B$ denote the asymptotic algebra,
\[\mathfrak{A}B=C_b((0,1],B)/C_0((0,1],B)\]
An {\em asymptotic morphism} from a $C^*$-algebra $A$ to $B$ is a $\ast$-homomorphism $A\to \mfA B$ (see \cite{GHT00}).
An asymptotic morphism determines a map in $K$-theory,
\[ K_j(A)\to K_j(B)\]
This map is the composition of three maps,
\[ K_j(A)\mapsto K_j(\mfA B)\cong K_j(C_b((0,1],B))\to K_j(B)\]
The first map is determined by the $\ast$-homomorphism $A\to \mfA B$.
Since $C_0((0,1],B)$ is contractible, we have an isomorphism in $K$-theory
\[  K_j(\mfA B)\cong K_j(C_b((0,1],B))\]
Finally, evaluation at $\lambda=1$ gives a $\ast$-homomorphism $C_b((0,1], B)\to B$.

Let $\{A_\lambda\}_{\lambda\in [0,1]}$ be a continuous field of $C^*$-algebras that is trivial away from $0$.
Given $a_0\in A_0$, extend it to a continuous section $a_\lambda\in A_\lambda, \lambda\in [0,1]$.
Then $\lambda\mapsto a_\lambda$ defines an element in $C_b((0,1],B)$ which is uniquely determined by $a_0$ up to equivalence mod $C_0((0,1],B)$.
Thus, the continuous field determines an asymptotic morphism,
\[ A_0\mapsto \mfA A_1\]
The map in $K$-theory determined by the continuous field $\{A_\lambda\}_{\lambda\in [0,1]}$ is the same as the map in $K$-theory determined by the asymptotic morphism $A_0\to \mfA A_1$.
We use continuous fields throughout to construct our deformations, but we need asymptotic morphisms for various commutative diagrams.

Consider a commutative diagram
\begin{equation}\label{eq:asymptotic morphism of ses}
\xymatrix{
0\ar[r]
&A\ar[r]\ar@{.>}[d]
&B\ar[r]\ar@{.>}[d]
&C\ar[r]\ar@{.>}[d]
&0\\
0\ar[r]
&D\ar[r]
&E\ar[r]
&F\ar[r]
&0
}
\end{equation}
$A, B, C, D, E, F$ are $C^*$-algebras.
Horizontal arrows are $\ast$-homomorphisms, and both rows are short exact sequences. The three vertical arrows are asymptotic morphisms.
The diagram (\ref{eq:asymptotic morphism of ses}) commutes asymptotically.
Equivalently, the following diagram of $\ast$-homomorphisms commutes,
\[
\xymatrix{
A\ar[r]\ar[d]
&B\ar[r]\ar[d]
&C\ar[d]
\\
\mfA D\ar[r]
&\mfA  E\ar[r]
&\mfA  F
}
\]
\begin{lemma}\label{lem:naturality of del for asymptotic morphism}
Given a diagram (\ref{eq:asymptotic morphism of ses}) that commutes asymptotically,
we obtain a commutative diagram in $K$-theory,
\[ 
\xymatrix{
K_1(C)\ar[r]^-{\partial}\ar[d]
&K_0(A)\ar[d]\\
K_1(F)\ar[r]^-{\partial}
&K_0(D)
}
\]
where $\partial$ denotes the boundary map in $K$-theory. The same holds for the boundary map from $K_0$ to $K_1$.
\end{lemma}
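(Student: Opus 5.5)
The plan is to reduce the statement to ordinary naturality of the boundary map for $\ast$-homomorphisms, applied to the suspended (or mapping-cylinder) picture of the asymptotic morphisms. Write the vertical asymptotic morphisms as $\ast$-homomorphisms $\alpha_A:A\to\mfA D$, $\alpha_B:B\to \mfA E$, $\alpha_C:C\to\mfA F$. Asymptotic commutativity of (\ref{eq:asymptotic morphism of ses}) means exactly that the two squares of $\ast$-homomorphisms into the bottom row $\mfA D\to\mfA E\to\mfA F$ commute.

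The first step is to check that the bottom row $0\to\mfA D\to\mfA E\to\mfA F\to 0$ is again short exact. Injectivity and surjectivity follow by applying $C_b((0,1],-)$ to the original sequence. Surjectivity here uses a bounded continuous lift, which exists by Bartle–Graves or by the fact that a quotient map admits a continuous norm-controlled cross section. Exactness in the middle holds modulo $C_0((0,1],-)$: a section of $E$ whose image in $F$ tends to $0$ can be corrected by a section of $D$, since the induced map $E/D\cong F$ is isometric. We then have a genuine morphism of short exact sequences from $0\to A\to B\to C\to0$ to $0\to\mfA D\to\mfA E\to\mfA F\to0$. Standard naturality of the six-term boundary map for morphisms of extensions gives $\partial\circ(\alpha_C)_*=(\alpha_A)_*\circ\partial$ as maps $K_1(C)\to K_0(\mfA D)$.

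The second step is to pass from the asymptotic algebras back to $D,E,F$. The map in $K$-theory is the composite $K_j(\mfA D)\cong K_j(C_b((0,1],D))\xrightarrow{ev_1} K_j(D)$. The sequences $0\to C_b((0,1],D)\to C_b((0,1],E)\to C_b((0,1],F)\to0$ and the ideals $C_0((0,1],-)$ give a morphism of extensions to the $\mfA$ row, namely the quotient maps, and another to the original bottom row, namely $ev_1$. Naturality of $\partial$ applied twice shows that both the isomorphism $K_j(C_b)\cong K_j(\mfA)$ and $ev_1$ intertwine the boundary maps. Composing the three commuting squares gives the claimed diagram. The $K_0\to K_1$ case is identical, either by the same naturality argument for the exponential map or by suspending, using $K_1=K_0(C_0(\RR)\otimes -)$ and tensoring everything with $C_0(\RR)$.

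The main obstacle is the exactness of the $C_b$ and $\mfA$ rows, specifically surjectivity $C_b((0,1],E)\to C_b((0,1],F)$. I would handle it with a continuous bounded right inverse for $E\to F$, via Bartle–Graves, composed pointwise. Alternatively one can avoid it entirely by working with $K_*(\mfA D)$ directly through the explicit formula (\ref{eq:K boundary}), lifting representatives elementwise. In that approach a lift $x$ of a unitary over $C$ maps to a lift of its image in $\mfA F$, and the parametrix relations persist in $\mfA$, so the idempotent $e$ is carried to the corresponding idempotent.
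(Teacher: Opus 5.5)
Your proposal is correct and is essentially the paper's proof: you establish exactness of the $\mfA$ row, then apply naturality of $\partial$ to the stacked diagram that relates the top row, the $\mfA$ row, the $C_b((0,1],-)$ row (via the quotient maps, which induce isomorphisms in $K$-theory), and the bottom row (via $ev_1$). The one difference is that you justify exactness of $C_b((0,1],-)$ with a Bartle--Graves continuous, norm-controlled section instead of the paper's appeal to nuclearity; yours is the more accurate justification, because $C_b((0,1],D)$ is strictly larger than $C_b((0,1])\otimes D$ when $D$ is infinite-dimensional.
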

\begin{proof}
By nuclearity of commutative $C^*$-algebras, the functors $C_b((0,1],\,-\,)=C_b((0,1])\otimes$ and $C_0((0,1],\,-\,)=C_0((0,1])\otimes$ are exact, 
and therefore so is the functor $\mfA$.
Diagram (\ref{eq:asymptotic morphism of ses}) can be expanded to a commutative diagram with four short exact sequences, where all arrows are $\ast$-homomorphisms,
\[ 
\xymatrix{
0\ar[r]
&A\ar[r]\ar[d]
&B\ar[r]\ar[d]
&C\ar[r]\ar[d]
&0\\
0\ar[r]
&\mfA D\ar[r]
&\mfA E\ar[r]
&\mfA F\ar[r]
&0\\
0\ar[r]
&C_b((0,1],D)\ar[r]\ar[u]\ar[d]
&C_b((0,1],E)\ar[r]\ar[u]\ar[d]
&C_b((0,1],F)\ar[r]\ar[u]\ar[d]
&0\\
0\ar[r]
&D\ar[r]
&E\ar[r]
&F\ar[r]
&0
}
\]
Naturality of the $K$-theory boundary map $\partial$ then gives a commutative diagram,
\[ 
\xymatrix{
K_1(C)\ar[r]^-{\partial}\ar[d]
&K_0(A)\ar[d]
\\
K_1(\mfA F)\ar[r]^-{\partial}
&K_0(\mfA D)
\\
K_1(C_b((0,1],F))\ar[r]^-{\partial}\ar[u]^{\cong}\ar[d]
&K_0(C_b((0,1],D))\ar[u]^{\cong}\ar[d]
\\
K_1(F)\ar[r]^-{\partial}
&K_0(D)
}
\]
The composition of the vertical maps is precisely the map in $K$-theory determined by the deformations $C\rightsquigarrow F$ and $A\rightsquigarrow D$.

\end{proof}

Lemma \ref{lem:naturality of del for asymptotic morphism}  holds equivariantly. See \cite{GHT00} for details on equivariant asymptotic morphisms.

\section{An Explicit Deformation of the Bott Generator in the Weyl algebra}\label{sec:explicit family of idempotents}

In this appendix we construct an explicit smooth family of idempotents connecting the Bott generator to the vacuum projection.  
We work  directly in the Weyl algebra, using  Mehler's formula (see e.g. \cite{Ta86}), and avoid the use of functional calculus.

Our construction is reminiscent of that in \cite{ENN93}, where a similar deformation is used to prove Bott periodicity. However, the family constructed there does not satisfy certain additional properties required for our purposes. We refine that construction by producing a family  that is invariant under the action of the unitary group on $V^{1,0}$, and whose entries are rapidly decaying functions (up to constants). 

These properties allow us to use this family of idempotents to construct an explicit deformation from the principal symbol of the Dirac operator to the principal Heisenberg symbol of the Szeg\"o projection. In particular, this yields a direct proof of Proposition \ref{prop:Dirac to Szego in even K theory}.

\vskip 12pt

For $\tau>0$, define 
\[K_\lambda(\tau)\in \mcS_\lambda=\mcS(V^*,\#_\lambda)\]
by Mehler's formula (expressed in the Weyl algebra),
\[
K_\lambda(\tau, v) = 
\begin{cases}
e^{-\tau\|v\|^2} &\text{  if }\lambda = 0\\
\frac{ 1 }{(\cosh \lambda \tau)^n}e^{- \frac{\tanh(\lambda \tau)}{\lambda} \|v\|^2}
&\text{  if }\lambda \ne 0 
\end{cases}
\]
Informally, $K_\lambda(\tau)$ is the $\#_\lambda$ analog of $e^{-\tau Q_\lambda}$, where $Q_\lambda(v)=\|v\|^2$ as in Example \ref{ex:harmonic osc}.

\begin{lemma}\label{Mehler}  For $\tau_1, \tau_2 \ge 0$, $K_\lambda(\tau)$ satisfies the semigroup equation
\[
K_\lambda(\tau_1)  K_\lambda(\tau_2) = K_\lambda(\tau_1+\tau_2) 
\]

\[
\frac{\partial K_\lambda(\tau)}{\partial \tau} = - Q_\lambda   K_\lambda(\tau) = -K_\lambda(\tau)   Q_\lambda
\]
 
\end{lemma}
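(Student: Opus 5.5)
The plan is a direct computation, reducing everything to the Gaussian composition rule of Example \ref{composition example}. For $\lambda=0$ the product $\#_0$ is pointwise multiplication, so $K_0(\tau_1)K_0(\tau_2)=e^{-(\tau_1+\tau_2)\|v\|^2}=K_0(\tau_1+\tau_2)$ is immediate. The derivative identity is also immediate, since $\partial_\tau e^{-\tau\|v\|^2}=-\|v\|^2e^{-\tau\|v\|^2}$ and $Q_0$ commutes with everything.

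For $\lambda\ne 0$, write $K_\lambda(\tau)=(\cosh\lambda\tau)^{-n}e^{-\alpha\|v\|^2}$ with $\alpha=\alpha(\tau)=\lambda^{-1}\tanh(\lambda\tau)$. Example \ref{composition example} is stated for $\alpha,\beta\ge 0$; it extends to $\lambda<0$ because only $\lambda^2$ appears, or via $\#_{-\lambda}$ being the Moyal product for $-\omega$. Applying it with $\alpha_i=\lambda^{-1}\tanh(\lambda\tau_i)$ gives the exponent
\[\frac{\alpha_1+\alpha_2}{1+\lambda^2\alpha_1\alpha_2}=\frac{1}{\lambda}\,\frac{\tanh\lambda\tau_1+\tanh\lambda\tau_2}{1+\tanh\lambda\tau_1\tanh\lambda\tau_2}=\frac{\tanh(\lambda(\tau_1+\tau_2))}{\lambda}\]
by the hyperbolic addition formula. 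The prefactor is $(\cosh\lambda\tau_1\cosh\lambda\tau_2)^{-n}(1+\tanh\lambda\tau_1\tanh\lambda\tau_2)^{-n}=(\cosh\lambda\tau_1\cosh\lambda\tau_2+\sinh\lambda\tau_1\sinh\lambda\tau_2)^{-n}=\cosh(\lambda(\tau_1+\tau_2))^{-n}$, which proves the semigroup law. The case $\tau=0$ gives the unit $1\in\mcW_\lambda$, which is consistent.

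For the differential equation I first compute $\partial_\tau K_\lambda(\tau)$ directly:
\[-\left(n\lambda\tanh(\lambda\tau)+\operatorname{sech}^2(\lambda\tau)\|v\|^2\right)K_\lambda(\tau).\]
Next I compute $Q_\lambda\#_\lambda K_\lambda(\tau)$ using the polynomial Moyal expansion, which is exact when one factor is quadratic:
\[Q\#_\lambda f=Qf+\tfrac{i\lambda}{2}\{Q,f\}-\tfrac{\lambda^2}{8}\sum \omega^{ab}\omega^{cd}\partial_a\partial_c Q\,\partial_b\partial_d f,\]
in sign conventions matching $v\#_\lambda w=vw+\frac{i\lambda}{2}\omega(v,w)$. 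Both $Q$ and $K_\lambda(\tau)$ are radial, so the Poisson bracket vanishes. The second-order term, computed in the symplectic basis $e_j,f_j$, equals $-\frac{\lambda^2}{4}\Delta f$ up to the sign check. Applied to $f=ce^{-\alpha\|v\|^2}$ on $\RR^{2n}$, this gives $\Delta f=(4\alpha^2\|v\|^2-4n\alpha)f$. Hence
\[Q\#_\lambda K=\left((1-\lambda^2\alpha^2)\|v\|^2+n\lambda^2\alpha\right)K=\left(\operatorname{sech}^2(\lambda\tau)\|v\|^2+n\lambda\tanh(\lambda\tau)\right)K,\]
which matches $-\partial_\tau K$. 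Since $\{Q,K\}=0$, the even-order terms are symmetric, so $K\#_\lambda Q=Q\#_\lambda K$ as well.

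The main obstacle is pinning down the correct normalization and sign of the second-order Moyal term for the given $\#_\lambda$. I would fix it by testing against Example \ref{composition example}, or by checking $Q_\lambda\#_\lambda s_\lambda=n\lambda s_\lambda$, which is forced by Example \ref{ex:harmonic osc}. An alternative that avoids this step is to take $\tau_2\to 0$ in the semigroup law and differentiate the explicit composition formula in $\beta$ at $\beta=0$. This expresses $\partial_\tau K$ as $K\#_\lambda(\partial_\tau K|_{\tau=0})$, and $\partial_\tau K|_{\tau=0}=-\|v\|^2=-Q_\lambda$. Making this rigorous requires continuity of $\#_\lambda$ as $\beta\to0$ on Gaussians, which follows from the closed-form expression.
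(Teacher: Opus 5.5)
Your proposal is correct and follows the paper's route. The semigroup law comes from the Gaussian composition rule of Example~\ref{composition example} together with the hyperbolic addition formulas, and the evolution equation is checked by direct computation, which the paper only cites to Taylor. The sign you flagged comes out as you wrote it: the second-order Moyal term is exactly $-\frac{\lambda^2}{4}\Delta f$, which matches expanding Example~\ref{composition example} to first order in $\alpha$, so $Q_\lambda\#_\lambda K_\lambda(\tau)=K_\lambda(\tau)\#_\lambda Q_\lambda=-\partial_\tau K_\lambda(\tau)$ holds as claimed.
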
 
\begin{proof}
The first equation follows  from Example \ref{composition example} and the composition laws of hyperbolic functions.
The second is verified by a direct calculation, cf. \cite{Ta86} p.75.

\end{proof}

As in section \ref{sec:Bott quantization},
let  $\Lambda=\Lambda^\bullet V^{1,0}$ and let $A_\lambda\in \Wp_\lambda \otimes \End(\Lambda)$ be
\[
A_\lambda:V^*\to \End(\Lambda) \qquad A_\lambda(z)= c(z)=\varepsilon_z+\iota_z
\]
Lemma \ref{lem:Asq} suggest that, again informally,  we can factor $e^{-\tau A_\lambda^2}$ as
\[ e^{-\tau A_\lambda^2}\sim K_\lambda(\tau) e^{-\lambda \tau N}\in \mcS_\lambda\otimes \End(\Lambda)\]

\begin{lemma} \label{lem:commutativity}
In $\mcW_\lambda\otimes\End(\Lambda)$, $A_\lambda$ commutes with $K_\lambda(\tau) e^{-\lambda \tau N}$.
\end{lemma}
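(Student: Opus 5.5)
The plan is to reduce the claim to two commutation facts and then combine them using the Clifford structure of $A_\lambda$.

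Write $A_\lambda=\sum_{j=1}^{2n} e_j\otimes c(e_j)$ as in the proof of Lemma \ref{lem:Asq}. The factor $K_\lambda(\tau)$ lies in $\mcS_\lambda\otimes 1$ and $e^{-\lambda\tau N}$ lies in $1\otimes\End(\Lambda)$, so these two factors commute with each other. The product $A_\lambda\,K_\lambda(\tau)e^{-\lambda\tau N}$ is therefore $\sum_j (e_j\#_\lambda K_\lambda(\tau))\otimes c(e_j)e^{-\lambda\tau N}$, and similarly for the other order. The task is to understand how $e_j$ moves past $K_\lambda(\tau)$ in the Weyl algebra, and how $c(e_j)$ moves past $e^{-\lambda\tau N}$ in $\End(\Lambda)$. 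The goal is to show that the two resulting correction factors cancel.

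\textbf{Weyl algebra side.} For a linear function $v$ and Schwartz $f$, the Moyal product gives the exact formula
\[ v\#_\lambda f = vf + \tfrac{i\lambda}{2}\{v,f\}, \]
since higher terms vanish for linear $v$. Here $\{\cdot,\cdot\}$ is the Poisson bracket normalized so that $[e_j,f_j]_{\#_\lambda}=i\lambda$. Since $K_\lambda(\tau)$ is a Gaussian $c\,e^{-a\|x\|^2}$ with $a=\tanh(\lambda\tau)/\lambda$, we get
\[ e_j\#_\lambda K = (e_j - i\lambda a f_j)K, \qquad K\#_\lambda e_j = (e_j + i\lambda a f_j)K, \]
and the analogous formulas for $f_j$ with the signs reversed. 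One can double-check the conventions against the semigroup identity of Lemma \ref{Mehler}.

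\textbf{Clifford side.} $N$ is the number-operator shift $2k-n$. The operator $\varepsilon_{e_j}$ raises degree by one and $\iota_{e_j}$ lowers it by one, so
\[ \varepsilon e^{-\lambda\tau N}=e^{2\lambda\tau}e^{-\lambda\tau N}\varepsilon, \qquad \iota e^{-\lambda\tau N}=e^{-2\lambda\tau}e^{-\lambda\tau N}\iota. \]
In the combinations $c(e_j)=\varepsilon_{e_j}+\iota_{e_j}$ and $c(f_j)=i(\varepsilon_{e_j}-\iota_{e_j})$, conjugation by $e^{-\lambda\tau N}$ acts on the pair $(c(e_j),c(f_j))$ as a hyperbolic rotation by angle $2\lambda\tau$, with $\cosh 2\lambda\tau$ on the diagonal and $\pm i\sinh 2\lambda\tau$ off the diagonal.

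\textbf{Combining.} Group $A_\lambda$ into the $2\times 2$ blocks $e_j\otimes c(e_j)+f_j\otimes c(f_j)$ and compare both products block by block. The Weyl side contributes the factors $1\pm i\lambda a$ times $iJ$-type mixing, and the Clifford side contributes the hyperbolic mixing. Commutativity reduces to the scalar identity
\[ \frac{1-\tanh\lambda\tau}{1+\tanh\lambda\tau}=e^{-2\lambda\tau}, \]
which indeed holds; this is why the factor $\tanh(\lambda\tau)/\lambda$ appears in Mehler's formula. The case $\lambda=0$ is immediate, since everything commutes there.

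The main obstacle is sign bookkeeping. This includes the sign in $e_j\#_\lambda K$ (which depends on $\omega(e_j,f_j)=1$ and the $\frac{i\lambda}{2}$ convention) and the identification $c(f_j)=i(\varepsilon_{e_j}-\iota_{e_j})$. To control it, I would work with the complex combinations $z_j=e_j+if_j$ and $\bar z_j$. Then $A_\lambda=\sum_j(\bar z_j\otimes\varepsilon_{e_j}+z_j\otimes\iota_{e_j})$, up to convention. Each term passes through $K$ with a scalar factor $e^{\mp 2\lambda\tau}$, and through $e^{-\lambda\tau N}$ with the inverse factor. This makes the cancellation transparent.
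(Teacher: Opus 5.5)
Your proposal is correct, but it takes a different route from the paper. The paper never computes the commutator. By Lemma \ref{Mehler} and Lemma \ref{lem:Asq}, $\partial_\tau\bigl(K_\lambda(\tau)e^{-\lambda\tau N}\bigr)=-A_\lambda^2\,K_\lambda(\tau)e^{-\lambda\tau N}$. Since $A_\lambda^2$ commutes with $A_\lambda$, the commutator $C(\tau)=[A_\lambda,K_\lambda(\tau)e^{-\lambda\tau N}]$ satisfies $\partial_\tau^k C=(-1)^kA_\lambda^{2k}C$. The paper then uses real analyticity in $\tau$ and the fact that $C(\tau)\to 0$ with all derivatives as $\tau\downarrow 0$ to conclude $C\equiv 0$.

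That argument is the structural fact that ``$e^{-\tau A^2}$ commutes with $A$''. It needs only the heat equation and $K_\lambda(0)=1$, not the closed form of Mehler's kernel. The cost is an analyticity and limit argument at $\tau=0$, where $K_\lambda(\tau)\to 1$ is no longer in $\mcS_\lambda$.

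Your argument is instead a finite algebraic verification. It uses the exact formula for $v\#_\lambda f$ with $v$ linear, the Gaussian form of $K_\lambda(\tau)$, and the ladder relations for $N$. The computation checks out: with $\lambda a=\tanh(\lambda\tau)$, the conditions from the $e_j$ and $f_j$ coefficients both reduce to $(1-\lambda a)e^{2\lambda\tau}=1+\lambda a$, which is your identity. This version needs no limiting argument. It also proves more: each ladder summand of $A_\lambda$ separately commutes with $K_\lambda(\tau)e^{-\lambda\tau N}$. And it shows why the coefficient $\tanh(\lambda\tau)/\lambda$ is forced, which the paper's approach does not explain.

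One bookkeeping point for the write-up, since the signs are the whole content. Write $K$ for $K_\lambda(\tau)$. With $z_j=e_j+if_j$, one has $A_\lambda=\sum_j\bigl(z_j\otimes\varepsilon_{e_j}+\bar z_j\otimes\iota_{e_j}\bigr)$, not the reverse pairing you wrote. Moreover $z_j\#_\lambda K=(1-\lambda a)z_jK$ and $K\#_\lambda z_j=(1+\lambda a)z_jK$, so $z_j\#_\lambda K=e^{-2\lambda\tau}K\#_\lambda z_j$. This is cancelled exactly by the factor $e^{2\lambda\tau}$ from $\varepsilon_{e_j}e^{-\lambda\tau N}=e^{2\lambda\tau}e^{-\lambda\tau N}\varepsilon_{e_j}$. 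The same cancellation occurs for $\bar z_j$ and $\iota_{e_j}$, with the exponents reversed.
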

\begin{proof}
 $ K_\lambda(\tau)$ is a real analytic function of $\tau>0$, and hence so is the commutator $[A_\lambda, K_\lambda(\tau) e^{-\lambda \tau N}]$. Therefore to check that it equals $0$ it is sufficient to check that all its derivatives in $\tau$ vanish at $\tau=0$. For $\tau>0$
 \begin{multline*}
 \frac{\partial^k}{\partial \tau^k } [A_\lambda, K_\lambda(\tau) e^{-\lambda \tau N}]=
(-1)^k [A_\lambda, (Q_\lambda +\lambda N)^{k} K_\lambda(\tau) e^{-\lambda \tau N}] =\\
(-1)^k A_\lambda^{2k} [A_\lambda, K_\lambda(\tau) e^{-\lambda \tau N}] 
 \end{multline*}
 When $\tau\downarrow 0 $, $K_\lambda(\tau) \to 1$ with all $v$ derivatives.
 Hence $[A, K_\lambda(\tau) e^{-\lambda t N}] \to 0$ with all derivatives as $\tau\downarrow 0 $.
 And hence also  $A_\lambda^{2k} [A_\lambda, K_\lambda(\tau) e^{-\lambda \tau N}]\to 0$. 
 
\end{proof}

 We now construct the idempotents that will interpolate between the Bott generator and the vacuum projection. 
With $\lambda\in [0,1]$ and $\tau\ge 1$ define
\[R_\lambda(\tau):=K_\lambda(\tau)e^{-\lambda \tau N} \in \mcS_\lambda \otimes \End(\Lambda)\] 
and
\[
B_\lambda(\tau):= A_\lambda  \int_\tau^{2\tau} K_\lambda(t) e^{-\lambda t N} dt \in \mcS_\lambda \otimes \End(\Lambda)
\]
The element $R_\lambda(\tau)$ is even with respect to the grading of $\Lambda$, while $B_\lambda(\tau)$ is odd.

\begin{lemma} For all $\tau\ge 1$,
\[
    A_\lambda B_\lambda(\tau)=B_\lambda(\tau) A_\lambda = R_\lambda(\tau)-R_\lambda(\tau)^2
     \]
\end{lemma}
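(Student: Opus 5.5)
The plan is to differentiate the semigroup $R_\lambda(t)=K_\lambda(t)e^{-\lambda tN}$ in $t$ and use the fundamental theorem of calculus. By Lemma~\ref{Mehler}, $\partial_t K_\lambda(t)=-Q_\lambda K_\lambda(t)$. Since $N\in\End(\Lambda)$ acts in the second tensor factor, it commutes with $Q_\lambda\otimes 1$ and with $K_\lambda(t)\otimes 1$. Hence
\[
\frac{\partial}{\partial t}R_\lambda(t)=-(Q_\lambda\otimes 1+1\otimes\lambda N)R_\lambda(t)=-A_\lambda^2R_\lambda(t),
\]
where the last equality is Lemma~\ref{lem:Asq}.

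The identity $A_\lambda B_\lambda(\tau)=B_\lambda(\tau)A_\lambda$ follows from Lemma~\ref{lem:commutativity}. That lemma says $A_\lambda$ commutes with $R_\lambda(t)$ for every $t>0$, and therefore with the integral $\int_\tau^{2\tau}R_\lambda(t)\,dt$. Here I will note that left and right multiplication by the polynomial element $A_\lambda$ is continuous on $\mcS_\lambda\otimes\End(\Lambda)$, so it passes under the integral. The integral converges in the Schwartz topology because $t\mapsto K_\lambda(t)$ is a smooth $\mcS_\lambda$-valued map on $[\tau,2\tau]$, as is clear from the explicit Gaussian formula.

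Next, multiplying by $A_\lambda$ and again moving it under the integral,
\[
A_\lambda B_\lambda(\tau)=\int_\tau^{2\tau}A_\lambda^2R_\lambda(t)\,dt=-\int_\tau^{2\tau}\frac{\partial R_\lambda}{\partial t}\,dt=R_\lambda(\tau)-R_\lambda(2\tau).
\]
It remains to identify $R_\lambda(2\tau)=R_\lambda(\tau)^2$. This is the semigroup law $K_\lambda(\tau)\#_\lambda K_\lambda(\tau)=K_\lambda(2\tau)$ from Lemma~\ref{Mehler}, combined with $e^{-\lambda\tau N}e^{-\lambda\tau N}=e^{-2\lambda\tau N}$. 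Because the two tensor factors commute, the product factorizes as $(K\otimes E)(K\otimes E)=K\#_\lambda K\otimes E^2$. The case $\lambda=0$ is covered as well: there everything is pointwise, $N$ drops out, and $A_0^2=Q_0$.

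The main obstacle is justifying the analytic manipulations. These are the differentiation of $R_\lambda(t)$ in $\mcW_\lambda\otimes\End(\Lambda)$ and the interchange of the $\#_\lambda$-product with the integral. For $\tau\ge1$, $K_\lambda(t)$ is a Gaussian with parameters depending smoothly on $t\in[\tau,2\tau]$, so its $t$-derivative converges in Schwartz seminorms. Multiplication by the polynomial $A_\lambda$, via $v\#_\lambda f=vf+\tfrac{i\lambda}{2}\{\cdot\}$-type first-order differential operators, is continuous on $\mcS$, which is what this step requires. I would check this last point explicitly, using the formula $v\#_\lambda f=vf+\frac{i\lambda}{2}\,\omega$-contracted derivative of $f$.
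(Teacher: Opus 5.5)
Your proposal is correct and is essentially the paper's proof. Both use Lemma~\ref{lem:commutativity} for the commutation, then Lemmas~\ref{lem:Asq} and~\ref{Mehler} to write $A_\lambda^2R_\lambda(t)=-\partial_tR_\lambda(t)$, integrate over $[\tau,2\tau]$, and identify $R_\lambda(2\tau)=R_\lambda(\tau)^2$ by the semigroup law. Your Schwartz-topology justifications for moving $A_\lambda$ under the integral are details the paper leaves implicit.
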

\begin{proof}
By Lemma \ref{lem:commutativity}, $A_\lambda$ commutes with $B_\lambda(\tau)$. Moreover,
\begin{align*}
A_\lambda B_\lambda(\tau) 
&= \int_\tau^{2\tau} (Q_\lambda+\lambda N) K_\lambda(t) e^{-\lambda t N} dt\\
&=
 -\int_\tau^{2\tau}   \frac{\partial}{\partial t} \left( K_\lambda(t) e^{-\lambda t N} \right) dt \\
 &= K_\lambda(\tau) e^{-\lambda \tau N}-K_\lambda(2\tau ) e^{-2  \lambda \tau N}
\end{align*}
By Lemma \ref{Mehler},
\[ K_\lambda(2\tau) e^{-2 \lambda \tau N} = R_\lambda(\tau)^2\]
This proves the result.

\end{proof}

Now write these operators in block form with respect to the grading $\Lambda=\Lambda^+\oplus\Lambda^-$,
\[A_\lambda= \begin{bmatrix} 0& a^*_\lambda\\
a_\lambda &0
\end{bmatrix}
\quad R_\lambda(\tau)= \begin{bmatrix} r_\lambda^+(\tau)& 0\\
0 &r_\lambda^-(\tau)
\end{bmatrix}
\quad B_\lambda(\tau)= \begin{bmatrix}  0 & b_\lambda^*(\tau)\\
b_\lambda(\tau) &0
\end{bmatrix}
\]
The idempotent in the following lemma is obtained by the standard procedure from relative $K$-theory, using formula (\ref{eq:K boundary}).
Heuristically, with parametrix $Y_\lambda$ as in (\ref{informal Y}) we have $R_\lambda=1-Y_\lambda A_\lambda=1-A_\lambda Y_\lambda$ and
 $B_\lambda=Y_\lambda R_\lambda$.

\begin{lemma}
 Let 
 \begin{equation*}\label{eq:e_lambda}
     e_\lambda(\tau) := 
 \begin{bmatrix}  (r^+)^2 & b^*  (1+r^-)\\
a r^+  & 1-(r^-)^2
\end{bmatrix}
 \end{equation*}
 where $a=a_\lambda$, $r^\pm=r^\pm_\lambda(\tau)$ and $b^*=b^*_\lambda(\tau)$.
Then $e_\lambda(\tau)$ is an idempotent in $\mcS^+_\lambda \otimes \End(\Lambda)$, where $\mcS^+_\lambda=\mcS_\lambda + \CC\cdot 1$.
For fixed $\tau\ge 1$, the family $e_\lambda(\tau)$ depends smoothly on $\lambda$. 
\end{lemma}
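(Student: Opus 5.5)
The plan is to check directly that $e_\lambda(\tau)$ is exactly the idempotent of formula (\ref{eq:K boundary}), with $x=a=a_\lambda$ and $y=b^*=b^*_\lambda(\tau)$. The role of $r$ and $s$ there is played by $r^-$ and $r^+$, i.e.\ $r=1-ab^*$ and $s=1-b^*a$.

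The first step is to read off these relations from the preceding lemma. Since $A_\lambda B_\lambda(\tau)=B_\lambda(\tau)A_\lambda=R_\lambda(\tau)-R_\lambda(\tau)^2$, and $A_\lambda$ commutes with $R_\lambda(\tau)$ (Lemma~\ref{lem:commutativity}), comparing the diagonal blocks of these odd-odd products gives
\[
b^*a=r^+-(r^+)^2,\qquad ab^*=r^--(r^-)^2,\qquad ar^+=r^-a,\qquad b^*r^-=r^+b^*.
\]
The first two are not literally $1-r$ in the appendix form. So instead of quoting (\ref{eq:K boundary}) I will verify $e^2=e$ by brute block multiplication, using only these four relations.

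The verification is a block-by-block computation.
\begin{itemize}
\item Top-left entry: $(r^+)^4+b^*(1+r^-)ar^+$. Using $(1+r^-)a=a(1+r^+)$ and $b^*a=r^+-(r^+)^2$, this becomes $(r^+)^4+(r^+-(r^+)^2)(1+r^+)r^+$, which collapses to $(r^+)^2$.
\item Bottom-right entry: $ar^+b^*(1+r^-)+(1-(r^-)^2)^2$. Write $ar^+b^*=r^-ab^*=(r^-)^2(1-r^-)$; the sum then reduces to $1-(r^-)^2$.
\item Off-diagonal entries: these are handled similarly, e.g.\ $(r^+)^2b^*(1+r^-)+b^*(1+r^-)(1-(r^-)^2)$. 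Commute $r^+$ past $b^*$ to get $b^*(1+r^-)\bigl[(r^-)^2+1-(r^-)^2\bigr]=b^*(1+r^-)$. The other corner is analogous.
\end{itemize}
This algebra is routine once the intertwining relations are in hand.

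Next I check membership in $\mcS^+_\lambda\otimes\End(\Lambda)$. Note that $R_\lambda(\tau)$ and $B_\lambda(\tau)$ lie in $\mcS_\lambda\otimes\End(\Lambda)$. For $B_\lambda$ this holds because $\int_\tau^{2\tau}K_\lambda(t)e^{-\lambda tN}dt$ is a Schwartz-valued integral, continuous in $t$, and $\mcS_\lambda$ is an ideal in $\mcW_\lambda$, so multiplying by the polynomial $A_\lambda$ stays in $\mcS_\lambda$. The entry $ar^+$ is likewise Schwartz, and the only non-Schwartz part is the constant $1$ in the bottom-right entry. Hence $e_\lambda(\tau)\in\mcS^+_\lambda\otimes\End(\Lambda)$.

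The main obstacle is smoothness in $\lambda$, including at $\lambda=0$, where both the formula for $K_\lambda$ and the product $\#_\lambda$ change form. For the formula I will write $\tanh(\lambda t)/\lambda$ and $\cosh(\lambda t)$ as real-analytic functions of $(\lambda,t)$. Because $\tanh(\lambda t)/\lambda\ge t\tanh(1)/\dots$ is bounded below by a positive constant for $\lambda\in[0,1]$, $t\in[1,2\tau]$, the Gaussians $K_\lambda(t)$ form a smooth family in the Schwartz space, uniformly in these parameters. For the product I will use the explicit Moyal integral of Definition~\ref{def:moyal product}. Products of Gaussians (Example~\ref{composition example}) and of polynomials times Gaussians are given by closed formulas that are smooth in $\lambda$, with $\#_0$ recovered as the limit. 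The entries of $e_\lambda(\tau)$ are finite $\#_\lambda$-products of such elements, so they depend smoothly on $\lambda$.
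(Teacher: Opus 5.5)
Your proposal is correct and follows the paper's proof: you extract the block relations $ar^+=r^-a$, $b^*r^-=r^+b^*$, $ab^*=r^--(r^-)^2$, $b^*a=r^+-(r^+)^2$, verify $e_\lambda(\tau)^2=e_\lambda(\tau)$ entry by entry, and get smoothness in $\lambda$ from $K_\lambda$ (your treatment of membership in $\mcS^+_\lambda\otimes\End(\Lambda)$ and of the $\lambda$-dependence of $\#_\lambda$ is more careful than the paper's). One small correction: $b^*r^-=r^+b^*$ is the off-diagonal block of $BR=RB$, not of the diagonal blocks of $AB=BA$, so justify it from the semigroup law of Lemma~\ref{Mehler} together with Lemma~\ref{lem:commutativity}.
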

\begin{proof}
From the identities 
\[ AR=RA\qquad BR=RB\qquad AB=BA=R-R^2\]
we obtain, in block form,
\[ ar^+=r^-a\qquad b^*r^-=r^+b^*\]
and
\[ab^*= r^-- (r^-)^2\qquad b^*a= r^+ - (r^+)^2\]
Using these relations, one checks that $e_\lambda(\tau)^2=e_\lambda(\tau)$. 
Smooth dependence on $\lambda$ follows from smooth dependence of $K_\lambda(\tau)$, hence of $R_\lambda(\tau)$ and $B_\lambda(\tau)$, on $\lambda$.

\end{proof}

\begin{lemma}
With $\lambda=0$, $\tau=1$, the formal difference of idempotents 
\[e_0(1)-\begin{bmatrix}0&0\\0&1\end{bmatrix}\in K_0^U(C_0(V^*))\]
is the Bott generator.
\end{lemma}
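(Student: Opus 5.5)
At $\lambda=0$ everything is commutative: $\#_0$ is pointwise multiplication on $V^*$. So $K_0(t,z)=e^{-t\|z\|^2}$ and $e^{-0\cdot tN}=1$. Writing $A_0(z)=c(z)$ with $c(z)^2=\|z\|^2$, we get
\[
R_0(1)(z)=e^{-\|z\|^2}\cdot I,\qquad
B_0(1)(z)=c(z)\int_1^2 e^{-t\|z\|^2}\,dt=c(z)\,\frac{e^{-\|z\|^2}-e^{-2\|z\|^2}}{\|z\|^2}.
\]
Thus $e_0(1)$ is exactly the idempotent produced by formula (\ref{eq:K boundary}) with
\[
x=c(z),\qquad y=g(c(z)),\qquad g(s)=\frac{e^{-s^2}-e^{-2s^2}}{s},
\]
and the remainders are $r=s=e^{-\|z\|^2}$. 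The plan is to show that this is homotopic, through $U$-equivariant idempotents over $C_0(V^*)^+\otimes\End(\Lambda)$, to the idempotent used in the proof of the earlier lemma identifying $[A_0^+]$ with the Bott generator.

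First, I would check that $e_0(1)-\mathrm{diag}(0,1)$ is a legitimate element of $K_0^U(C_0(V^*))$. Each entry of $e_0(1)-\mathrm{diag}(0,1)$ is a Schwartz function on $V^*$. Each entry is also $U$-equivariant, because $\|z\|^2$ is $U$-invariant and $z\mapsto c(z)$ is $U$-equivariant. Hence $e_0(1)$ defines a class, and by (\ref{eq:K boundary}) that class is $\partial$ of the symbol $c(z)$ on the sphere at infinity. This uses the standard fact that the boundary map class computed via (\ref{eq:K boundary}) does not depend on the choice of lift $x$ and parametrix $y$.

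Next comes the main step, which is to compare parametrices. Both pairs use the same odd lift $x=c(z)$, or a lift differing from it by a positive scalar function $\phi(\|z\|)$ equal to $1$ near infinity. I would take the convex combination $y_u=(1-u)y+uy'$ of the two parametrices, where $y'$ is the parametrix from the Bott lemma, rescaled to match this lift. Every such $y_u$ has the form $h_u(\|z\|)\,c(z)$, where $h_u$ is a scalar function with $1-h_u\|z\|^2\in C_0$. It follows that $r_u=1-xy_u$ and $s_u=1-y_ux$ lie in $C_0(V^*)$ for all $u$. The resulting idempotents $e_u$ from (\ref{eq:K boundary}) therefore form a continuous equivariant path, and their classes agree.

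There is one subtlety to handle here. In the Bott lemma the lift is $(1+\|z\|^2)^{-1/2}c(z)$ rather than $c(z)$. I would deal with this by a homotopy of lifts $x_u=(1+u\|z\|^2)^{-1/2}\cdots$, again all of the form scalar times $c(z)$, chosen so that $x_u$ agrees on the sphere at infinity with a fixed symbol up to positive rescaling. Alternatively, I would simply pass to the radial compactification, where a lift is only relevant modulo $C_0$. In either case the expected obstacle is the bookkeeping at infinity: one has to confirm that all interpolating elements stay in the unitization, and that the symbol $c(z)/\|z\|$ on $SV^*$ is unchanged. Both facts hold because everything is a scalar radial function times $c(z)$.

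With that in place, the conclusion follows from the earlier lemma: $[e_0(1)]-[\mathrm{diag}(0,1)]=[A_0^+]$ is the Bott generator, represented by $e_\beta$.
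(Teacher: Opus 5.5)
Your strategy is the paper's. Its proof observes that $e_0(1)$ and $e_\beta$ both come from formula (\ref{eq:K boundary}) applied to $c(z)$ with different parametrices, and are therefore homotopic; you are constructing that homotopy explicitly.

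However, you have misidentified the parametrix. With $g(s)=(e^{-s^2}-e^{-2s^2})/s$ you get $y=B_0(1)$. Then $xy=e^{-\|z\|^2}-e^{-2\|z\|^2}$, so $r=s=1-e^{-\|z\|^2}+e^{-2\|z\|^2}\to 1$ at infinity. This causes four problems:
\begin{enumerate}
\item This $y$ is not a parametrix at all.
\item It contradicts your own claim that $r=s=e^{-\|z\|^2}$.
\item Formula (\ref{eq:K boundary}) with this $y$ does not reproduce $e_0(1)$. Its diagonal tends to $\mathrm{diag}(1,0)$ at infinity, so subtracting $\mathrm{diag}(0,1)$ does not even give an element over $C_0(V^*)$.
\item The condition $1-h_u\|z\|^2\in C_0$ that drives your straight-line homotopy fails at this endpoint.
\end{enumerate}
The correct choice is the paper's $Y_0=(1-e^{-\|z\|^2})c(z)^{-1}$, i.e.\ $g(s)=(1-e^{-s^2})/s$ as in (\ref{informal Y}). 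Then $r=s=e^{-\|z\|^2}=R_0(1)$, and $B_0(1)=Y_0R_0(1)$ is the product $yr$. That is exactly why the upper right block of $e_0(1)$ is $b^*(1+r^-)=yr(1+r)$.

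With this correction your argument goes through and becomes a more explicit version of the paper's two-line proof. You leave the lift mismatch vague. Note that $x=c(z)$ is unbounded, so ``agreeing on the sphere at infinity'' is not the right criterion at that endpoint. It can be handled cleanly as follows. Replacing $(x,y)$ by $(\phi x,\phi^{-1}y)$ for a positive radial function $\phi$ leaves $r$ and $s$ unchanged and conjugates $e$ by $\mathrm{diag}(1,\phi)$. Taking $\phi_u=(1+u\|z\|^2)^{-1/2}$ for $u\in[0,1]$ gives a norm-continuous, $U$-equivariant path of idempotents whose entries stay Schwartz. This path ends at the lift $c(z)/\sqrt{1+\|z\|^2}$ with parametrix $\sqrt{1+\|z\|^2}\,Y_0$. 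Next, take the straight-line path from this parametrix to $(\sqrt{1+\|z\|^2}-1)c(z)^{-1}$. Its remainders are $(1-u)e^{-\|z\|^2}+u(1+\|z\|^2)^{-1/2}\in C_0(V^*)$, and it ends exactly at $e_\beta$.
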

\begin{proof}
If $\lambda=0$, equation (\ref{informal Y}) defines a  function $Y_0\in \mcS(V^*)\otimes \End(\Lambda)$,
\[ Y_0(z) = 
\begin{cases}
    (1-e^{-\|z\|^2})c(z)^{-1}& \text{ if } z\ne 0\\
    0&\text{ if } z=0
\end{cases}
\]
Then $e_0(1)$ is homotopic to the idempotent $e_\beta$ of equation (\ref{eq:Bott3}).
The two idempotents are obtained by a different choice of parametrix for $c(z)$.

\end{proof}

If we fix $\tau=1$ we obtain a family of idempotents $e_\lambda(1)\in \mcS^+_\lambda \otimes \End(\Lambda)$ with $\lambda\in [0,1]$. 
Next we fix $\lambda=1$ and let $\tau\to\infty$.

\begin{proposition}
Let $p_0\in \End(\Lambda)$ be the rank one projection $\Lambda^\bullet V^{1,0} \to \Lambda^0 V^{1,0}$ and let $s_1\in \mcS_1$ be the vacuum projection as in Example \ref{composition example}. 
Fix $\lambda=1$. Then as $\tau \to \infty$,
    \[
    e_1(\tau) \to  \begin{bmatrix}  s_1\otimes p_0  & 0\\
0 & 1
\end{bmatrix}
    \]
converges in the Schwartz space topology for $\mcS^+_1\otimes \End(\Lambda)$.    
\end{proposition}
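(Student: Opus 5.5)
The plan is to compute the limits of the blocks $r^\pm_1(\tau)$, $b^*_1(\tau)$ and $a r^+$ explicitly from Mehler's formula and then substitute them into the formula for $e_1(\tau)$.

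With $\lambda=1$ we have
\[
R_1(\tau)=K_1(\tau)\,e^{-\tau N},\qquad
K_1(\tau,v)=(\cosh\tau)^{-n}e^{-\tanh(\tau)\|v\|^2}.
\]
The operator $N$ is diagonal, acting on $\Lambda^k V^{1,0}$ as $2k-n$. So on $\Lambda^k$ the factor $R_1(\tau)$ is the scalar function
\[
(\cosh\tau)^{-n}e^{-(2k-n)\tau}e^{-\tanh(\tau)\|v\|^2}
=\Big(\tfrac{2}{1+e^{-2\tau}}\Big)^{n}e^{-2k\tau}e^{-\tanh(\tau)\|v\|^2}.
\]

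\textbf{Step 1: the diagonal blocks.} As $\tau\to\infty$, the $k=0$ component tends to $2^ne^{-\|v\|^2}=s_1$. This convergence holds in the Schwartz topology, because $\tanh\tau\to1$ and $\tanh\tau\ge\tanh 1>0$ uniformly. Each seminorm $\sup|v^\alpha\partial^\beta(\cdot)|$ of a Gaussian depends continuously on its width parameter and prefactor. For $k\ge1$ the components are $O(e^{-2\tau})$ times Gaussians with uniformly bounded seminorms, so they tend to $0$. This gives $r^+_1(\tau)\to s_1\otimes p_0$, using that $\Lambda^0\subset\Lambda^+$, and $r^-_1(\tau)\to0$. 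Since $\mcS_1$ is a Fréchet algebra under $\#$ and the Moyal product is jointly continuous on $\mcS$, we get $(r^+)^2\to (s_1\otimes p_0)^2=s_1\otimes p_0$, by Example \ref{composition example}. Likewise $1-(r^-)^2\to 1$.

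\textbf{Step 2: the off-diagonal blocks.} Left Moyal multiplication by the linear polynomial entries of $A_1$ is continuous on $\mcS$, since it is the map $f\mapsto vf+\tfrac{i}{2}\{v,f\}$-type first-order operator. Hence it suffices to find the limit of $a_1 r^+_1(\tau)$. Its $\Lambda^0$-component is $a_1$ applied to a Gaussian tending to $s_1$. I would show that $A_1\#(s_1\otimes p_0)=0$, so that this limit vanishes. One can compute directly: $z\#s_1$ for the annihilation-type combinations gives zero, because $s_1$ quantizes the vacuum projection and $\pi_1(A_1^+)$ kills the vacuum $0$-form (this was established in the lemma computing $[A_\lambda^+]$). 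The components with $k\ge1$ are killed by the $e^{-2k\tau}$ decay.

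For $b^*(1+r^-)$, write
\[
B_1(\tau)=A_1\int_\tau^{2\tau}R_1(t)\,dt .
\]
The integrand, as a function valued in $\mcS\otimes\End(\Lambda)$, converges, and its $\Lambda^0$-part converges to $s_1\otimes p_0$ with exponentially small error of order $e^{-2t}$ in every seminorm. The $\Lambda^{k\ge1}$-parts are $O(e^{-2kt})$. Thus
\[
B_1(\tau)=\tau\,A_1\#(s_1\otimes p_0)+O(\tau e^{-2\tau})=O(\tau e^{-2\tau})\to0,
\]
using the same vanishing $A_1\#(s_1\otimes p_0)=0$. Multiplying by the bounded factor $1+r^-$ preserves convergence to $0$.

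\textbf{Main obstacle.} The main work is the quantitative estimate that the Schwartz seminorms of $K_1(t)-s_1$ decay like $e^{-2t}$, fast enough to beat the linear growth in $\tau$ from the integral over $[\tau,2\tau]$. I would get this by differentiating the explicit Gaussian formula in its parameters $\tanh t$ and $(\cosh t)^{-n}e^{nt}$, both of which converge exponentially. The other point needing care is the identity $A_1\#(s_1\otimes p_0)=0$ in the Weyl algebra; I would verify it by a direct Moyal computation of $\bar z_j\# e^{-\|v\|^2}$.
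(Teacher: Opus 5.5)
Your proof is correct. Step 1, the diagonal blocks, is exactly the paper's argument. For the off-diagonal entries you take a different route.

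\textbf{The paper's route.} It uses the grading. Since $A_1$ is odd and $R_1$ is even, the two off-diagonal blocks of $B_1(\tau)$ are
$b^*=a^*\int_\tau^{2\tau} r^-_1(t)\,dt$ and $b=a\int_\tau^{2\tau} r^+_1(t)\,dt$.
Only $b^*$ appears in $e_1(\tau)$, and it involves only $\Lambda^kV^{1,0}$ with $k$ odd. There the factor $e^{-2kt}$ gives decay outright, with no $\Lambda^0$-component at all. The lower-left entry is handled by the intertwining relation $ar^+=r^-a$, which comes from $A_1R_1=R_1A_1$ (Lemma \ref{lem:commutativity}). This reduces it to $r^-_1(\tau)\to 0$ and right multiplication by a linear polynomial. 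No cancellation and no rate estimate are needed.

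\textbf{Your route.} You prove the stronger statement $B_1(\tau)\to 0$. This includes the block $b$, which never enters $e_1(\tau)$ and whose $\Lambda^0$-component grows linearly in $\tau$. To control it you need two ingredients:
\begin{itemize}
\item the annihilation identity $A_1\#(s_1\otimes p_0)=0$, i.e.\ $(x_j+i\xi_j)\#e^{-\|v\|^2}=0$, or equivalently $\pi_1(A_1^+)$ kills the vacuum $0$-form;
\item the estimate that the $\Lambda^0$-component of $R_1(t)$ approaches $s_1$ at rate $O(e^{-2t})$ in every Schwartz seminorm.
\end{itemize}
Both are true, so your argument goes through. The same identity also correctly handles your treatment of $ar^+$. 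The ``main obstacle'' you flag is just an artifact of not using the block structure.

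\textbf{Comparison.} Your approach makes explicit the mechanism, namely that $A_1$ annihilates the vacuum, which the paper encodes implicitly in $ar^+=r^-a$. The paper's argument is shorter and purely structural.
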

\begin{proof}
We analyze the entries of $e_1(\tau)$ as $\tau \to \infty$. 
On $\Lambda^k V^{1,0}$, the number operator $N$ acts by multiplication by $2k-n$, and therefore
\[
R_1(\tau) = \frac{ e^{- \tanh(\tau ) \|v\|^2} }{(\cosh \tau )^n} e^{(-2k+n)\tau}
\]
Using
\[ \frac{1}{(\cosh{\tau})^n} e^{n\tau} = \left(\frac{2}{1+e^{-2\tau}}\right)^n\]
we obtain
\[R_1(\tau)|_{\Lambda^kV^{1,0}} =\frac{ 2^n e^{- \tanh(\tau ) \|v\|^2} }{(1+e^{-2\tau} )^n} e^{- 2k \tau}\]
For $k=0$, this converges in the Schwartz topology to the vacuum projection
\[ R_1(\tau)|_{\Lambda^0V^{1,0}} \to 2^ne^{-\|v\|^2} = s_1\]
For $k>0$, the extra factor $e^{-2k\tau}$ implies that 
\[R_1(\tau)|_{\Lambda^kV^{1,0}}\to 0\]
in the Schwartz topology as $\tau\to\infty$. Hence we obtain for the diagonal entries,
\[r^+_1 (\tau) \to s_1\otimes p_0\qquad r^-_1(\tau) \to 0\]
in $\mcS_1\otimes \End(\Lambda)$.

Next consider
\[\int_\tau^{2 \tau}K_1(t)e^{-tN}dt\]
On $\Lambda^k V^{1,0}$ this acts by multiplication with 
\[
\int_\tau^{2 \tau} \frac{2^n e^{- \tanh(t) \|v\|^2} }{(1+e^{-2t})^n} e^{-2kt}dt 
\]
For $k>0$, this converges to $0$ in the Schwartz topology as $\tau\to\infty$, since every Schwartz seminorm is bounded by a constant multiple of $\int_\tau^{2\tau} e^{-2kt}dt$, uniformly in $v$.
It follows that the off-diagonal block $b_1^*(\tau)\to 0$.

It remains to control the lower-left entry $a r^+_1(\tau)$. 
Since $ar^+=r^-a$, and $r^-_1(\tau)\to 0$, we obtain $ar^+_1(\tau)\to 0$.

Now
\[ e_1(\tau) 
 =
 \begin{bmatrix}  r^+_1(\tau)^2 & b^*_1(\tau)  (1+r^-_1(\tau))\\
a r^+_1(\tau)  & 1-(r^-_1(\tau))^2
\end{bmatrix}
\]
Since each matrix entry converges in the Schwartz topology, we conclude that
\[e_1(\tau) 
\to  
\begin{bmatrix}  s_1\otimes p_0 & 0\\
0  & 1
\end{bmatrix}\]

\end{proof}

Concatenating the path $\{e_\lambda(1)\}_{\lambda\in [0,1]}$ with the path $\{e_1(\tau)\}_{\tau\in [1,\infty]}$, we obtain a continuous family of idempotents in $\mcS^+_\lambda\otimes \End(\Lambda)$.
This provides an explicit deformation from the Bott generator to the vacuum projection that is
invariant under the action of the unitary group on $V^{1,0}$.
\vfill

\section{Notation}

\begin{tabular}{ll}

$M$ & compact $(2n+1)$-dimensional contact manifold \\[2pt]

$\theta$ & contact form on $M$ \\[2pt]

$H=\ker\theta$ & contact hyperplane bundle \\[2pt]

$\omega=d\theta|_{H}$ & symplectic form on $H$ \\[2pt]

$J$ & compatible complex structure on $(H,\omega)$ or $(V,\omega)$ \\[2pt]

$H^{1,0}$ & complex bundle determined by $J$ \\[2pt]

$V$ & $2n$-dimensional symplectic vector space \\[2pt]

$V^{1,0}$ & complex vector space determined by $(V,\omega,J)$ \\[2pt]

$\Heis=V\times\RR$ & Heisenberg group with law 
$(v,t)(w,s)=(v+w,t+s-\tfrac12\omega(v,w))$ \\[2pt]

$\mcS(V)$ & Schwartz class functions on $V$ \\[2pt]

$\ast_\lambda$ & twisted convolution on $\mcS(V)$ \\[2pt]

$\#_\lambda$ & Moyal product on $\mcS(V^*)$ \\[2pt]

$\Wp_\lambda$ & polynomial Weyl algebra \\[2pt]

$\mcW_\lambda$ & Weyl symbol algebra \\[2pt]

$W_\lambda^*$ & $C^*$-completion of $\mcW_\lambda^0$ \\[2pt]

$\sigma^W$ & principal Weyl symbol \\[2pt]

$\Lambda=\Lambda^\bullet V^{1,0}$ & spinor space, 
$\Lambda=\Lambda^+\oplus\Lambda^-$ \\[2pt]

$\varepsilon_z,\iota_z$ & exterior and interior multiplication on $\Lambda$ \\[2pt]

$c(z)=\varepsilon_z+\iota_z$ & Clifford multiplication \\[2pt]

$\slashed{S}=\Lambda^\bullet H^{1,0}$ & Spinor bundle on $M$ \\[2pt]

$D$ & Spin$^c$ Dirac operator on $M$ \\[2pt]

$P_+$ & positive spectral projection of $D$ \\[2pt]

$S$ & Szeg\"{o} projection \\[2pt]

$\sigma(D)$ & classical principal symbol of $D$ \\[2pt]

$\sigma_H(\mcL)$ & Heisenberg principal symbol of $\mcL$ \\[2pt]

$\SymH^0$ & order-zero Heisenberg symbol algebra \\[2pt]

$\SymH^*$ & $C^*$-completion of $\SymH^0$ \\[2pt]

$T_HM$ & Heisenberg tangent groupoid \\[2pt]

$C^*(T_HM)$ & groupoid $C^*$-algebra \\[2pt]

$K_j(\cdot)$ & $K$-theory \\[2pt]

$K_j^G(\cdot)$ & equivariant $K$-theory \\[2pt]

$KK(\cdot,\cdot)$ & Kasparov $KK$-theory \\

\end{tabular}
\vfill

\bibliographystyle{amsplain}
\bibliography{MyBibfile}

\end{document}